\newtheorem{theorem}{Theorem}[section]
\newtheorem{proposition}{Proposition}[section]
\newtheorem{definition}{Definition}[section]
\newtheorem{lemma}{Lemma}[section]
\newtheorem{remark}{Remark}[section]
\newtheorem{example}{Example}[section]
\newcommand{\proofbox}{\hspace{\fill}{$\Box$}}
\newenvironment{proof}{\textbf{Proof}.}{\proofbox}
\def\emptyset{\mbox{{\rm \O}}}
\def\bar{\overline}
\date{}
\numberwithin{equation}{section}
\begin{document}

\title{Second-order Optimality Conditions for Time-Optimal Control Problems Governed by Semilinear Parabolic Equations} 
	
\author{H. Khanh\footnote{Department of Mathematics, FPT University, Hoa Lac Hi-Tech Park, Hanoi, Vietnam   and Department of Optimization and Control Theory,	Institute of Mathematics, Vietnam Academy of Science and Technology,	18 Hoang Quoc Viet road, Hanoi, Vietnam; email: khanhh@fe.edu.vn},    B.T. Kien\footnote{Department of Optimization and Control Theory, Institute of Mathematics, Vietnam Academy of Science and Technology,18 Hoang Quoc Viet road, Hanoi, Vietnam; email: btkien@math.ac.vn}  \   and   A. R\"{o}sch\footnote{Faculty of Mathematics, University of Duisbug-Essen, Thea-Leymann-Strasse 9, 45127 Essen,
Germany; email: arnd.roesch@uni-due.de}}
	
\maketitle
	
\medskip
	
\noindent {\bf Abstract.} {\small A class of time-optimal  control problems governed by semilinear parabolic equations with mixed pointwise constraints and final point constraints is considered. By introducing the so-called locally optimal solution to time-optimal  control problems, we establish first and second-order necessary optimality conditions of KKT-type and second-order sufficient conditions for locally optimal solutions to the problem.
	
\medskip
	
\noindent {\bf  Key words.} Time-optimal control, first and second-order necessary  optimality conditions,  second-order sufficient condition, semilinear parabolic control, mixed pointwise constraints.
	
\noindent {\bf AMS Subject Classifications.} 49K20, 35K20
	
\vspace{0.4cm}

\section{Introduction}
	
Let $\Omega$ be a bounded domain in $\mathbb{R}^N$ with $N=2$ or $3$ and its  boundary $\Gamma=\partial \Omega$ is of class $C^2$. Let $D=H^2(\Omega)\cap H_0^1(\Omega)$, $V=H_0^1(\Omega)$,  $H=L^2(\Omega)$ and $Q_T:=\Omega\times (0, T)$. We consider  the problem of finding $T>0$,  control function $u\in L^\infty(Q_T)$  and the corresponding state $y\in C(\bar Q_T)\cap W^{1, 1}_2(0, T; D, H)$ which solve
\begin{align}
    &J(T,y, u):= \psi_0(T, y(T))+ \int_0^T\int_\Omega L(x, t,  y(x, t), u(x, t)) dxdt\to\inf \label{P1}\\
    &{\rm s.t.}\notag\\
 (P)\quad\quad\quad   &\frac{\partial y}{\partial t} +Ay + \psi(x, t, y) =u\quad \text{in}\quad Q_T,\ y(x, t)=0\ \text{on}\ \Sigma_T = \Gamma\times [0, T],\label{P2} \\
    & y(0)=y_0 \quad \text{in}\quad \Omega, \label{P3}\\
    &\psi_i(T, y(T))\le 0,  \quad   i=1,2,..., m,  \label{P4}\\
    & a\leq g(x,t,  y(x, t), u(x, t))\leq b \quad \text{a.a.}\quad (x, t)\in Q_T,  \label{P5}
\end{align} where $y_0\in H^1_0(\Omega)\cap C(\bar\Omega)$, $a<b$,  $A$ denotes a second-order elliptic operator of the form
$$
Ay =  - \sum_{i,j = 1}^N D_j\left(a_{ij}(x) D_i y \right),
$$  $L:Q \times {\mathbb R} \times {\mathbb R} \to {\mathbb R}$,  $\psi: \Omega\times[0, +\infty)\times{\mathbb R} \to {\mathbb R}$, $\psi_i: [0, +\infty)\times C(\bar \Omega)\to \mathbb{R}$ with $i=0,1,..., m$ and $g: \Omega\times[0, +\infty)\times \mathbb{R}\times\mathbb{R} \to \mathbb{R}$ are of class $C^2$.  Throughout of the paper, we denote by    $\Phi$ the  feasible set of problem $(P)$, that is, $\Phi$ consists of triples $(T, y, u) \in (0, +\infty)  \times \big(C(\bar Q_T)\cap W^{1, 1}_2(0, T; D, H)\big) \times L^\infty(Q_T)$ which satisfies conditions \eqref{P2}-\eqref{P5}. The goal of problem $(P)$ is to find a control function $u$ which plays a role as heat source to control the temperature of the system in the shortest  time $T>0$.

The class of time-optimal control problems is important for modern technology. They  have   several applications in different areas. In particular, there are many problems in the field of  Aerospace and Rocketry can be solved by finding solutions  of time-optimal control problems such as: Soft landing on the Moon (see \cite{Cesari}), Optimal ascent trajectories and Orbital transfer problems (see \cite{Longuski}), Launch vehicle guidance and trajectory optimization (see \cite{Pesce} and \cite{Y}). Recently, scientists of SpaceX have used time-optimal control for guiding reusable launch vehicles, such as the SpaceX Falcon 9, back to Earth for a soft landing. Here, the goal is to minimize the time for descent and landing while conserving fuel and ensuring a safe touchdown.

In order to find solution to time-optimal control problems, we often derive necessary optimality conditions and solve the system of optimality conditions by computer to obtain approximate solutions. Then we prove that the approximate solutions converge to the exact solution and  give error estimate betwen the exact solution and approximate solutions. This can be done by using  second-order sufficient optimality conditions. Besides, we can use the second-order sufficient conditions  to check whether or not an extremal  point is an optimal solution to the problem. Therefore,  optimality conditions for optimal control problems are significant. In this paper we aim at deriving first and second-order necessary optimality conditions and second-order sufficient optimality conditions  for problem $(P)$. 

Although, there have been many papers dealing with first and second-order optimality conditions for optimal control problems with fixed final time (see for instance \cite{Arada-2000}-\cite{Arada-2002-2}, \cite{Bayen}, \cite{Casas-2000}-\cite{Casas-2023}, \cite{Hu}, \cite{Khanh.Kien-2024} and \cite{Troltzsch}),  few of papers study second-order optimality conditions for time-optimal control problems governed by PDEs or ODEs. This is because the structure of a time-optimal control problem is quite complicated. It is never convex even the constraints are of linear forms; the function spaces vary in  time variable $T$. These causes difficulties for the existence of optimal solution and how to define the so-called locally optimal solution to the problem.   In \cite{Maurer-2002}, H. Maurer and H.J. Oberle gave sufficient optimality conditions for time optimal control problems governed by ODE with mixed constraints while  H. Maurer and N.P. Osmolovskii \cite{Maurer-2004} studied second order optimal conditions for time-optimal  control which is linear in control variable. In \cite{Raymond} J.-P. Raymond and  H. Zidani establised a Pontryagin's principle (first-order optimality conditions) for time-optimal control problems governed by semilinear
parabolic equations with pointwise state constraints and unbounded controls.  Also,   N. Arada and J.-P. Raymond \cite {Arada-2003} derived first-order optimality conditions for time- optimal problems with Dirichlet boundary controls. Recently, in \cite{Bon},  L. Bonifacius et al. have considered a time-optimal control problem governed by linear parabolic equation with quadratic cost function and pure control constraint. They established necessary and sufficient second-order optimality conditions and gave error estimates between the exact solution of the original problem and approximate solutions of discretization problems which are discretized by the space-time finite element method.

It is noted that, to derive optimality conditions for optimal solution $(T_*, y_*, u_*)$ to time-optimal control problems, in the previous papers,  the authors assumed that  it was a strongly local solution, that is, $(T_*, y_*, u_*)$ is an optimal solution to the problem in a neighborhood of $(T_*,y_*)$  (see Definition 3.3 in \cite{Bayen}). To our best knowledge, so far there have been no paper dealing with first and second-order optimality conditions for a locally optimal solutions to time-optimal control problems governed by seminlinear parabolic equations with mixed constraints. Therefore, our obtained results in this paper are novel.

In the present setting of problem $(P)$,  we will face with some challenges: the regularity of solution to the state equation,   how to define a locally optimal solution; how to derive optimality conditions in both state variable and control variable and how to transform the problem to a new problem with fixed final time.  To  overcome these challenges, we first establish  the existence and regularity of solution to the state equation. Then we  give a suitable definition for locally optimal solution by extending both sate function and control function. Since implicit function theorem is fail to apply for $(P)$, we need to derive optimality conditions in both variable $y$ and $u$. For this we will derive optimality conditions for a specific mathematical programming problem in Banach space  and apply the obtained result for $(P)$. In order to transform $(P)$ to an optimal control  problem with fixed final time,  we use the simplest way by  changing variable $t=Ts$ with $s\in [0,1]$ to reduce $(P)$  to a fixed final time problem $(P_1)$, where $(T, v)$ plays a role as a new control variable. It is worth pointing out that a new technique is given in this paper to establish second-order sufficient conditions for $(P_1)$, which deals with  only control variable $v$. 

The remainder of the paper is organized as follows. In Section 2, we establish some related results on the regularity of solution to the state equation and derive optimality conditions for a specific mathematical programming in Banach spaces. The statement of main results are given in Section 3. In section 4, we derive first and second-order optimality conditions for a reduced problem with fixed final time. Section 5 is destined for  proofs of main results.

\section{Related results}

\subsection{Mathematical programming problem}

Let  $E$,  $W$ and $Z$ be Banach spaces with the dual spaces  $E^*$,  $W^*$ and $Z^*$, respectively. We denote by $\tau(\mathbb{R}^m)$ the strong topology in $\mathbb{R}^m$, by $\sigma(E^*, E)$ the weak$^*$ topology in $E^*$ and by $\sigma(W^*, W)$ the weak$^*$ topology in $W^*$.  Given a Banach space $X$,  $B_X(x_0, r)$ stands for the open ball with center $x_0$ and radius $r$. Given a subset  $M$ in $X$. We shall denote by ${\rm Int}M$ and $\overline{M}$ the interior and the closure of $M$, respectively. 
	
Let $f: Z\to\mathbb{R}$,   $F\colon Z\to W$,  $G: Z\to E$ and $H_i:Z\to \mathbb{R}^n$ with $i=1,2,..., m$  be mappings  and $K$ be a nonempty  convex set in $E$. 	 We consider the following  optimization problem. 
\begin{align*}
\text{(MP)}\quad\quad
\begin{cases}
		&f(z)\to \min\\
		&\text{s.t.}\\
           &F(z)=0,\ H_i(z)\leq 0,\  G(z)\in K,\  i=1,2,...,m.
  \end{cases}
\end{align*}
We shall denote by $\Sigma$ the feasible set of  (MP), that is,
$$
\Sigma=\{z\in Z\;|\; F(z)=0, H_i(z)\leq 0, G(z)\in K, i=1,2,..., m\}.
$$ Let us recall some concepts  of variational analysis.	Given a  nonempty subset $S$   of $X$ and $\bar x\in \bar S$,  the set
	\begin{align*}
	T(S, \bar x)&:=\left\{h\in X\;|\; \exists t_n\to 0^+, \exists h_n\to h, \bar x+t_nh_n\in S \ \ \forall n\in\mathbb{N}\right\},
	\end{align*}
	is called {\em the contingent cone} to $M$ at $\bar x$. It is well-known that when $M$ is convex, then
 $$
 T(S, \bar x)=\overline{{\rm cone}(S-\bar x)},
 $$	where 
$$
\mathrm{cone}\,(S-\bar x)=\{\lambda (v-\bar x)\;|\; v\in S, \lambda>0\}.
$$ When $S$ is a convex set, the normal cone to $S$ at $\bar x$ is defined by
\begin{equation*}
N(S, \bar x):=\{x^*\in X^*\;|\; \langle x^*, x-\bar x\rangle\leq 0 \ \ \forall x\in S\}=
		\{x^*\in X^*\;|\; \langle x^*, h\rangle\leq 0\ \ \forall h\in T(S, \bar x)\}.
\end{equation*}
Given a feasible point $z_0 \in \Sigma$,  we denote by $f'(z_0)$ or $Df(z_0)$  the first order derivative of $f$ at $z_0$ and by $f''(z_0)$ or $D^2 f(z_0)$ the second-order derivative of $f$ at $z_0$. Let us impose the following assumptions.

\begin{enumerate}
	\item[$(A1)$] ${\rm int} K\neq \emptyset$;
	
	\item[$(A2)$] The mappings $f, F$ and $G$ are of class $C^2$ around $z_0$;
	
	\item[$(A3)$] The range of $DF(z_0)$ is closed in $W$;

        \item[$(A4)$]  $DF(z_0)$ is onto and there exists $\widetilde z\in Z$ such that
        \begin{align}\label{Man-From}
        \begin{cases}
		&DF(z_0)\widetilde z=0,\\
            & H_i(z_0)+  DH_i(z_0)\widetilde z <0\ \forall i=1,2,..., m\\
		&G(z_0) +  DG(z_0) \widetilde z  \in {\rm int}(K).
        \end{cases}
	\end{align}
\end{enumerate}
It is  noted that $(A4)$ is a type  of the Mangasarian–Fromowitz condition. Obviously, $(A4)$ implies $(A3)$. 

To deal with  second-order optimality conditions, we need the so-called critical cone which is defined as follows. Let ${\mathcal C}_0[z_0]$ be a set of vectors $d \in Z$ satisfying conditions:
\begin{itemize}
	\item [$(a_1)$] $\nabla f(z_0)d \leq 0, $
	
	\item [$(a_2)$] $DF(z_0)d = 0,$
	
	\item [$(a_3)$] $DH_i(z_0)d\leq 0$ for $i\in I_0$, where $I_0:=\{i\in\{1,2,..., m\}: H_i(z_0)=0 \}$, 
	
	\item [$(a_4)$] $DG(z_0)d  \in {\rm cone}(K-G(z_0))$.
\end{itemize}  Then  the closure of ${\mathcal C}_0[z_0]$ in $Z$ is called the critical cone at $z_0$ and denoted by ${\mathcal C}[z_0]$. Each vector $d\in \mathcal{C}[z_0]$ is called a critical direction. 

 Let us assume that 
\begin{align*}	{\mathcal L}\left( z, \lambda, w^*, l, e^* \right) = \lambda f(z) + \langle w^*, F(z)\rangle + \sum_{i=1}^ml_iH_i(z)+ \langle e^*, G(z)\rangle 
\end{align*} is  the Lagrange function  associated with the problem $(OP)$, where $\lambda\in\mathbb{R}$, $e^* \in E^*$, $l=(l_1,l_2,..., l_m)\in\mathbb{R}^m$ and $w^* \in W^*$. We say that vector $(\lambda, e^*, l, w^*)\in \mathbb{R}\times E^*\times\mathbb{R}^n\times W^*$ are Lagrange multipliers of $(MP)$ at $z_0$ if the following conditions are fulfilled:
\begin{align}
	&D_z  {\mathcal L}\left( z_0,\lambda,  e^*, l,  w^* \right)= \lambda Df(z_0) + DF(z_0)^*e^* +\sum_{i=1}^m l_i DH_i(z_0) + DG(z_0)^* w^*=0,\label{L1}\\
	&\lambda\geq 0,\ l_i\geq 0,\ l_i H_i(z_0)=0, i=1,2,.., m,\label{L3}\\
	& e^*\in N(K, G(z_0)),\label{L4}
\end{align} where $l:=(l_1, l_2,..., l_n)$. We denote by $\Lambda[z_0]$ the set of Lagrange multipliers at $z_0$. In addition,  if $\lambda=1$, then we say $(1, e^*, l, w^*)$ are normal. We shall denote by $\Lambda_*[z_0]$ the set of  Lagrange multipliers $(e^*, l, w^*)$ such that $(1, e^*, l, w^*)\in\Lambda[z_0]$.

We now have the following important lemma.

\begin{lemma}\label{Lemma-key1} Suppose that $z_0\in \Sigma$  is a locally optimal solution of $(MP)$ under which  $(A1)$, $(A2)$ and $(A3)$ are valid,  and $DF(z_0)$ is surjective.  Then for each  $d\in \mathcal{C}_0[z_0]$, the system
	\begin{align}
 \begin{cases}
		&f'(z_0)z+\frac{1}2f''(z_0)d^2< 0,   \\
		&H'_i(z_0)z+\frac{1}22H''_i(z_0)d^2< 0, \ \ \forall i\in I_0,  \\
		&DF(z_0)z +\frac{1}2D^2 F(z_0)(d,d) =0, \\
		&D G(z_0)z+\frac{1}2D^2G(z_0))(d,d)\in {\rm cone}({\rm int} K-G(z_0))
  \end{cases}
	\end{align}
	has no solution $z\in Z$.
\end{lemma}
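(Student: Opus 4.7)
The plan is to argue by contradiction: assume that for some critical direction $d\in\mathcal{C}_0[z_0]$ the displayed system admits a solution $z\in Z$, and exhibit a curve $t\mapsto z(t)\in\Sigma$ with $f(z(t))<f(z_0)$ for all small $t>0$, contradicting the local optimality of $z_0$.

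First I would take the second-order arc $\sigma(t):=z_0+td+t^2 z$ as a backbone. A Taylor expansion of $F$ at $z_0$, combined with $DF(z_0)d=0$ (the critical-direction condition $(a_2)$) and the equality $DF(z_0)z+\tfrac12 D^2F(z_0)(d,d)=0$ from the system, gives $F(\sigma(t))=o(t^2)$. Since $DF(z_0)$ is surjective with closed range (so that $(A3)$ is in force), Lyusternik's theorem produces a correction $r(t)\in Z$ with $\|r(t)\|=o(t^2)$ such that $F(\sigma(t)+r(t))=0$ for all sufficiently small $t>0$. Set $z(t):=\sigma(t)+r(t)$; the size of $r(t)$ ensures that every subsequent second-order Taylor expansion of $f$, $H_i$, $G$ along $z(t)$ differs from the one along $\sigma(t)$ only by an $o(t^2)$ term.

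Next I would verify the remaining feasibility of $z(t)$. For each $H_i$, Taylor expand and split on $i$: if $i\notin I_0$ then $H_i(z_0)<0$ and continuity suffices; if $i\in I_0$ then $H_i(z_0)=0$, and either $DH_i(z_0)d<0$ (in which case the linear term dominates) or $DH_i(z_0)d=0$, in which case the strict inequality $DH_i(z_0)z+\tfrac12 D^2H_i(z_0)(d,d)<0$ from the system forces the quadratic coefficient to be negative. In either subcase $H_i(z(t))<0$ for small $t>0$. For the convex constraint, use $(a_4)$ to write $DG(z_0)d=\mu(\hat e-G(z_0))$ with $\mu\ge 0$, $\hat e\in K$, and the system's last relation to write $DG(z_0)z+\tfrac12 D^2G(z_0)(d,d)=\lambda(\bar e-G(z_0))$ with $\lambda>0$, $\bar e\in\mathrm{int}(K)$. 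Then
\[
G(z(t))=(1-t\mu-t^2\lambda)G(z_0)+t\mu\hat e+t^2\lambda\bar e+o(t^2),
\]
which, for small $t>0$, is a convex combination of points of $K$ plus an $o(t^2)$ perturbation. Since a fixed closed ball around $\bar e$ lies in $K$, the perturbation is absorbed and convexity of $K$ gives $G(z(t))\in K$.

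Finally, the same Taylor machinery applied to $f$ yields either a strictly negative linear term (when $f'(z_0)d<0$) or a strictly negative quadratic coefficient (when $f'(z_0)d=0$, by the first strict inequality of the system), so $f(z(t))<f(z_0)$ for small $t>0$. Combined with the feasibility verifications, this contradicts the local optimality of $z_0$. The principal difficulty I anticipate is the quantitative application of Lyusternik's theorem at the quadratic order to secure $r(t)=o(t^2)$ and to guarantee that the implied $o(t^2)$ perturbations in the other Taylor expansions are small enough to be absorbed inside $\mathrm{int}(K)$ via the ball around $\bar e$; once that is set up, the rest is a careful Taylor bookkeeping exercise.
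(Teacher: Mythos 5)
Your argument is correct and follows exactly the route the paper indicates: the paper's proof is only a one-line reference to \cite[Lemma 3.2]{Kien-Binh-2023} via ``a Taylor expansion and the Ljusternik theorem,'' which is precisely your second-order arc $z_0+td+t^2z$ with a Lyusternik correction of size $o(t^2)$ and the subsequent Taylor bookkeeping for $f$, $H_i$ and the absorption of the $o(t^2)$ perturbation into a ball around $\bar e\in\mathrm{int}\,K$. No gaps; you have simply supplied the details the paper omits.
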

\begin{proof} The proof is proceeded analogously to the proof of \cite[Lemma 3.2]{Kien-Binh-2023}. For this  we need to use a Taylor expansion and the Ljusternik theorem.
\end{proof}

The following result gives  first and second-order necessary optimality conditions for $(MP)$.

\begin{proposition}\label{KeyProp} Suppose that $z_0$ is a locally optimal solution to $(MP)$. Then  the following assertions are fulfilled:
	
\noindent $(a)$ If $(A1), (A2)$ and $(A3)$ are satisfied, then for each  $d\in \mathcal{C}_0[z_0]$, there exist  $(\lambda, w^*,l,  e^*)\in \Lambda_*[z_0]$  such that 
\begin{align}\label{VOP-SOC1}
 D^2_z\mathcal{L}(z_0, \lambda, e^*, l,  w^*)[d,d]\geq 0.
\end{align} As a consequence, 
\begin{align*}
 \sup_{(\lambda, e^*,l,  w^*)\in\Lambda[z_0]}D^2_z\mathcal{L}(z_0, \lambda, e^*, l,  w^*)[d,d]\geq 0\quad\forall d\in\mathcal{C}_0[z_0]. 
\end{align*}

\noindent $(b)$ If $(A1), (A2)$ and $(A4)$ are satisfied, then $\Lambda_*[z_0]$  is nonempty and compact  in the topology $\sigma(\mathbb{R}^k, \mathbb{R}^k)\times\sigma(E^*, E)\times\sigma(W^*, W)$,  and  
\begin{align}\label{VOP-SOC2}
\max_{(e^*,l,  w^*)\in\Lambda_*[z_0]} D^2_z\mathcal{L}(z_0, 1, e^*, l,  w^*)[d,d]\geq 0\quad \forall d\in \mathcal{C}[z_0].   
\end{align}
\end{proposition}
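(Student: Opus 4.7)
The strategy is to bootstrap Lemma~\ref{Lemma-key1} via a Dubovitskii--Milyutin-type convex separation, and then exploit the Mangasarian--Fromowitz condition (A4) to obtain normality and compactness in part (b). Assumption (A3) will be used to reconstruct the dual variable $w^*$ from an annihilator computation, while (A1) is essential for opening up the set used in the separation.

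For part (a), fix a critical direction $d \in \mathcal{C}_0[z_0]$. Choose $z_d \in Z$ with $DF(z_0)z_d = -\tfrac{1}{2}D^2F(z_0)(d,d)$ (possible by surjectivity of $DF(z_0)$) and consider the affine subspace $\tilde S := z_d + \ker DF(z_0)$. Lemma~\ref{Lemma-key1} states that the image of $\tilde S$ under the affine map
$$
\Psi(z) = \Bigl(f'(z_0)z + \tfrac{1}{2}f''(z_0)(d,d),\; \{H'_i(z_0)z+ \tfrac{1}{2}H''_i(z_0)(d,d)\}_{i\in I_0},\; DG(z_0)z + \tfrac{1}{2}D^2G(z_0)(d,d)\Bigr)
$$
misses the convex set $C := (-\infty,0)^{1+|I_0|} \times {\rm cone}({\rm int}\,K - G(z_0))$ in $\mathbb{R}^{1+|I_0|} \times E$, which is open since, by (A1), the cone of a nonempty open convex set is open. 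Hahn--Banach yields a nonzero continuous linear functional $(\lambda, \{l_i\}_{i\in I_0}, e^*)$ separating $\Psi(\tilde S)$ from $C$; letting the finite-dimensional coordinates in $C$ tend to $-\infty$ and exploiting the conic structure in the $E$-component produces the sign conditions $\lambda \geq 0$, $l_i \geq 0$, and $e^* \in N(K, G(z_0))$.

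Extend $l_i := 0$ for $i \notin I_0$ to secure the complementary slackness \eqref{L3}. Since the separation inequality holds for every $z \in \tilde S$, its linear part $\lambda f'(z_0) + \sum_i l_i H'_i(z_0) + DG(z_0)^* e^*$ must annihilate $\ker DF(z_0)$. By (A3) the range of $DF(z_0)$ is closed, so the annihilator identity $(\ker DF(z_0))^{\perp} = {\rm Range}\,DF(z_0)^*$ furnishes $w^* \in W^*$ making \eqref{L1} valid. Evaluating the separation inequality at the specific point $z = z_d$ cancels the $z$-linear terms via \eqref{L1} and leaves precisely $\tfrac{1}{2} D^2_z\mathcal{L}(z_0, \lambda, e^*, l, w^*)[d,d] \geq 0$, which is \eqref{VOP-SOC1}. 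The ``as a consequence'' clause is then immediate.

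For part (b), (A4) excludes $\lambda = 0$: pairing \eqref{L1} with the $\tilde z$ from \eqref{Man-From}, and using the strict feasibility inequalities together with $l_i \geq 0$, $e^* \in N(K, G(z_0))$ and \eqref{L3}, forces successively $l = 0$, $e^* = 0$, and then $w^* = 0$ by surjectivity of $DF(z_0)$, contradicting nontriviality of the separating functional. Hence one may normalize $\lambda = 1$, so $\Lambda_*[z_0] \neq \emptyset$; the same pairing estimate yields a uniform norm bound, while weak$^*$ closedness of $\Lambda_*[z_0]$ is automatic from the continuity in \eqref{L1}--\eqref{L4}, so Banach--Alaoglu gives compactness in the stated product topology. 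Finally, for $d \in \mathcal{C}[z_0]$, take $d_n \in \mathcal{C}_0[z_0]$ with $d_n \to d$, apply part (a) with normalization to obtain $(l_n, e_n^*, w_n^*) \in \Lambda_*[z_0]$ satisfying the second-order inequality at $d_n$, extract a weak$^*$-convergent subsequence by compactness, and pass to the limit in the bilinear form to obtain \eqref{VOP-SOC2}. The main obstacle I expect is the clean Hahn--Banach setup, which must simultaneously accommodate the finite-dimensional components (handled by a standard orthant argument), the Banach space $E$ (where (A1) permits working with an open cone), and the Banach space $W$ (whose equality constraint is dispatched by restricting to the affine subspace $\tilde S$).
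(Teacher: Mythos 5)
There is one concrete gap, at the very first step of part (a). You choose $z_d$ with $DF(z_0)z_d=-\tfrac12 D^2F(z_0)(d,d)$ ``by surjectivity of $DF(z_0)$'', but part (a) only assumes (A3), i.e.\ that $\mathrm{Range}\,DF(z_0)$ is \emph{closed}, not that it equals $W$. If $DF(z_0)$ is not onto, such a $z_d$ need not exist; moreover Lemma~\ref{Lemma-key1}, which your separation argument relies on, itself carries surjectivity of $DF(z_0)$ as an explicit hypothesis, so your entire part-(a) argument is conditional on surjectivity. The missing case is easy but must be stated: if $\mathrm{Range}\,DF(z_0)$ is a proper closed subspace of $W$, Hahn--Banach provides a nonzero functional annihilating it; taking $\lambda=0$, $l=0$, the $K$-multiplier zero and this functional (with the sign chosen so that its pairing with $D^2F(z_0)(d,d)$ is nonnegative) gives a degenerate element of $\Lambda[z_0]$ satisfying \eqref{VOP-SOC1}. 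This is exactly the paper's ``Case 1''. Note also that part (a) as stated can only produce a multiplier in $\Lambda[z_0]$ (possibly with $\lambda=0$), not in $\Lambda_*[z_0]$; the paper's statement has a typo there, and your proof correctly targets the $\Lambda[z_0]$ version. Part (b) is unaffected, since (A4) assumes $DF(z_0)$ onto.

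Apart from this, your route is valid and genuinely different from the paper's in the main (surjective) case: the paper separates in the full product $\mathbb{R}^{1+|I_0|}\times E\times W$, using the open mapping theorem to show the relevant convex set is open, and reads off all multipliers at once; you instead eliminate the equality constraint by restricting to the affine subspace $\tilde S=z_d+\ker DF(z_0)$, separate only in $\mathbb{R}^{1+|I_0|}\times E$ (where openness of $(-\infty,0)^{1+|I_0|}\times\mathrm{cone}(\mathrm{int}K-G(z_0))$ is immediate from (A1)), and recover the $W^*$-multiplier afterwards from the closed-range identity $(\ker DF(z_0))^{\perp}=\mathrm{Range}\,DF(z_0)^*$ — a different, and arguably cleaner, use of (A3). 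In part (b) the paper outsources nonemptiness, boundedness and weak$^*$ compactness of $\Lambda_*[z_0]$ to the Robinson constraint qualification and Bonnans--Shapiro, and uses Berge's maximum theorem for the limit passage over $\mathcal{C}[z_0]$; your direct pairing estimate with $\tilde z$ achieves the same bound. One small technical point there: weak$^*$ compactness yields convergent subnets rather than subsequences unless the preduals are separable, so either argue with nets or, as the paper does, with upper semicontinuity of the max function; since $D^2G(z_0)(d_n,d_n)$ and $D^2F(z_0)(d_n,d_n)$ converge in norm, the limit passage goes through either way.
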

\begin{proof} $(a)$. Fixing  any $d\in \mathcal{C}_0[z_0]$, we consider two cases.
	 
\noindent {\it Case 1}. $D F(z_0)Z\neq E$. 

Then there is a point $e\in E$ such that $e\notin DF(z_0)Z$. Since $DF(z_0)Z$ is a closed subspace,  the separation theorem (see \cite[Theorem 3.4]{Rudin}) implies that there exists a nonzero functional $e^*\in E^*$ which separates $e$ and $\nabla F(z_0)Z$, that is, $\langle e^*, e\rangle \geq \langle e^*, DF(z_0)z\rangle$ for all $z\in Z$. This implies that $DF(z_0)^* e^*=0$. Putting $\lambda=0, l=0, w^*=0$, we get $(0, e^*, 0, 0)\in\Lambda(z_0)$. If $e^*DF(z_0)d^2\leq 0$, then we replace $e^*$ by $-e^*$. Then $(0, -e^*, 0, 0)$ satisfies the conclusion of $(a)$. 
	
\noindent {\it Case 2}. $D F(z_0)Z= E$.

Let $m_0=|I_0|$. We define a set $S$ which consists of vectors $(\mu, \gamma,  e, w)\in\mathbb{R}\times\mathbb{R}^{m_0}\times E\times W$ such that there exists $z\in Z$ satisfying
\begin{align}
\begin{cases}
	&f'(z_0)z+\frac{1}2f''(z_0)d^2<\mu,\\
	&H'_j(z_0)z+\frac{1}2H''_j(z_0)d^2<\gamma_j, j\in I_0,\\
	&D F(z_0)z+\frac{1}2 D^2F(z_0)d^2=e \\
	&D G(z_0)z+ \frac{1}2 D^2G(z_0)d^2 -w\in{\rm cone}({\rm int}K-G(z_0)).
 \end{cases}
\end{align} It is clear that $S$ is convex. By using the open mapping theorem, we can  show that $S$ is open.   By Lemma \ref{Lemma-key1}, we have  $(0, 0, 0, 0)\notin S$. By  the separation theorem (see \cite[Theorem 1, p. 163]{Ioffe-1979},  there exists a nonzero vector 
$(\lambda, l, e^*,  w^*)\in \mathbb{R}^k\times\mathbb{R}^{m_0}\times E^*\times \times W^*$ such that 
\begin{align}\label{Seperation2}
	\lambda\mu + l\gamma \langle e^*, e\rangle  +\langle w^*, w\rangle\geq 0\quad \forall (\mu, \gamma, e,  w)\in S. 
\end{align} 
Fix any $z\in  Z$,  $w'\in{\rm cone }({\rm int }K-G(z_0))$,  $r>0$ and $r_j>0$ . Put
\begin{align*}
	& \mu=r+ f'(z_0)z +\frac{1}2 f''(z_0)d^2,\ \gamma_j=r_j+  H'_j(z_0)z +\frac{1}2H''_j(z_0)d^2,\\
	&e=D F(z_0)z +\frac{1}2D^2 F(z_0)d^2,\  w=D G(z_0)z+ \frac{1}2D^2 G(z_0)d^2-w'.
\end{align*} Then $(\mu,\gamma,  e, w)\in S$. From this and  \eqref{Seperation2}, we get
\begin{align*}
&\lambda(r+ f'(z_0)z +\frac{1}2 f''(z_0)d^2)+ \sum_{j\in I_0} l_j(r_j+ H'_j(z_0)z +\frac{1}2 H''_j(z_0)d^2)\\
&+ \langle e^*, DF(z_0)z +\frac{1}2D^2 F(z_0)d^2\rangle +\langle w^*, DG(z_0)z +\frac{1}2 D^2G(z_0)d^2\rangle -\langle w^*, w'\rangle\geq 0.
\end{align*} If  $\lambda<0$ then by letting $r\to+\infty$, the term on the left hand side approach to $-\infty$ which is impossible. Hence we must have $\lambda \geq 0$. Similarly, we have $l_j\geq 0$ for all $j\in I_0$. 

By letting $r\to 0$ and $r_j\to 0$,  we get 
\begin{align}\label{Seperation3}
	\lambda f'(z_0)+\sum_{j\in I_0}l_jH'_j(z_0) + DF(z_0)^* e^* + DG(z_0)^* w^*=0
\end{align} and 
$$
\frac{1}2 \big(f''(z_0)d^2+ \sum_{j\in I_0} l_i H''_i(z_0) d^2+  \langle e^*, D^2F(z_0)d^2\rangle +
 \langle w^*, D^2G(z_0)d^2\rangle \big)\geq \langle w^*, w'\rangle 
$$ for all $w'\in{\rm cone}({\rm int}K-G(z_0))$. It follows that 
$$
\langle w^*, w'\rangle\leq 0,  \forall w'\in{\rm cone}({\rm int}K-G(z_0)).
$$ Since $ K\subseteq\bar K=\overline{{\rm int}K}$, we obtain
$$
\langle w^*, w'\rangle\leq 0,  \forall w' \in {\rm cone}(K-G(z_0)).
$$ This implies that $w^*\in N(K, G(z_0))$.     By letting $w'\to 0$, we get 
\begin{align}\label{Saperation4}
f''(z_0)d^2+ \sum_{j\in I_0} l_j H''_i(z_0) d^2+  \langle e^*,D^2F(z_0)d^2\rangle +
 \langle w^*, D^2G(z_0)d^2\rangle \geq 0.
\end{align} We now take $l_j=0$ for $j\in\{1,2,..., m\}\setminus I_0$. Then we have $l_j\geq 0$ and $l_j H_j(z_0)=0$ for all $j=1,2,.., m$ and \eqref{Seperation3} and \eqref{Saperation4} become
\begin{align*} 
	 f'(z_0)+\sum_{j=1}^n l_jH'_j(z_0) + DF(z_0)^*e^* + DG(z_0)^*w^*=0
\end{align*} and 
\begin{align*}
	f''(z_0)d^2+ \sum_{j=1}^n l_jH''_i(z_0) d^2+  \langle e^*. D^2F(z_0)d^2\rangle +
	\langle w^*, D^2G(z_0)d^2\rangle \geq 0.
\end{align*} Assertion $(a)$ is proved. 

\noindent $(b)$. We claim that $\lambda\neq 0$. Indeed, if $\lambda=0$, then 
$$
DH(z_0)^* l^T+ DF(z_0)^* e^* + DG(z_0)^* w^*= 0.
$$ Here $H:=(H_1, H_2,..., H_m)$.  Let $\tilde z$ be a vector  satisfying  $(A4)$. Then we have 
$$
l^TDH(z_0)\tilde z+  \langle e^*,  DF(z_0)\tilde z\rangle   + \langle w^*,  DG(z_0)^* \tilde z\rangle =0.
$$  This implies that 
$$
l^T DH(z_0)\tilde z + \langle w^*, DG(z_0) \tilde z\rangle=0,  
$$ where
$$
DH(z_0)\tilde z\in (-\infty, 0)^m -H(z_0)={\rm int}( (-\infty, 0]^m-H(z_0))
$$ and 
$$
DG(z_0) \tilde z \in {\rm int}K- G(z_0)={\rm int}(K-G(z_0)). 
$$ Then for  any $\xi\in\mathbb{R}^m$ and $w\in W$, there exist $\gamma>0$ and  $s>0$ small enough such that 
$$
\gamma\xi + DH(z_0)\tilde z \in (-\infty, 0]^m-H(z_0)
$$
and 
$$
sw+ DG(z_0) \tilde z \in K-G(z_0).
$$  Since $l\geq 0$, $l^T H(z_0)=0$ and $w^*\in N(K, G(z_0))$, we have 
$$
\gamma l^T\xi + s\langle w^*, w\rangle=l^T(\tau \xi+ DH(z_0)\tilde z)+\langle w^*, sw+ DG(z_0) \tilde z \rangle\leq 0.  
$$ Hence $l^T\xi\leq 0$ for all $\xi\in\mathbb{R}^n$ and  $\langle w^*, w\rangle \leq 0$ for all $w\in W$. This implies that $l=0$ and  $w^*=0$. Consequently, $DF(z_0)^* e^*=0$. Since $DF(z_0)$ is surjective, we get $e^*=0$. Hence $(\lambda, e^*, l,  w^*)=(0,0,0, 0)$ which is absurd. The claim is justified. 

Let us define  $\widehat{K}=\{0\}\times (-\infty, 0]^m\times K$ and $\widehat{G}=(F, H,  G)$. Then the constraint of $(MP)$ is  equivalent to  constraint $\widehat{G}(z) \in \widehat{K}$.
According to \cite[Corollary 2.101,  page 70]{Bonnans}, $(A4)$ is equivalent to the  Robinson constraint qualification:
\begin{align*}
	0\in {\rm int} \{\tilde{G}(z_0) + D\tilde{G}(z_0)Z -\tilde{K}\}. 
\end{align*} By \cite[Theorem 3.9, p. 151]{Bonnans}, $\Lambda_*[z_0]$ is  nonempty convex bounded and compact in the topology $\sigma(E^*, E)\times\tau(\mathbb{R}^m)\times\sigma(W^*, W)$. Since the function
$$
(e^*, l,  w^*, d)\mapsto D_z^2\mathcal{L}(z_0, 1, e^*, l, w^*)(d,d).
$$ is continuous on  the topology $\sigma(E^*, E)\times\tau(\mathbb{R}^m)\times\sigma(W^*, W)\times \tau(Z)$,  the function 
$$
d\mapsto \max_{(e^*,l, w^*)\in\Lambda (z_0)}D_z^2\mathcal{L}(z_0, 1, e^*, l, w^*)(d, d)
$$ is continuous (see \cite[Theorem 1 and 2, p. 115]{Berge}). Taking any  $d\in \mathcal{C}[z_0]$, we see that, there exists a sequence $(d_j)\subset \mathcal{C}_0[z_0]$ such that $d_j\to d$. By assertion $(a)$, 
\begin{align*}
\max_{( e^*,l,  w^*)\in\Lambda_*[z_0]}D_z^2\mathcal{L}(z_0, 1, e^*, l,  w^*)(d_j, d_j)\geq 0.
\end{align*} Passing to the limit, we obtain 
\begin{align*}
	\max_{( e^*, l, w^*)\in\Lambda_*[z_0]}D_z^2\mathcal{L}(z_0, 1, e^*,l,  w^*)(d, d)\geq 0.
\end{align*}The proof of the position is complete.
\end{proof}
\begin{remark} {\rm When $K$ is a cone and $H_j=0$ for all $j=1,2,..., m$, we get a similar result with \cite[Theorem 8.2]{Bental}. However, in this case, the obtained result is failed to apply for  problem $(P)$.  }   
\end{remark}

\subsection{State equation}

In the sequel, we assume that $\partial\Omega$ is of class $C^2$. Let  $H:=L^2(\Omega)$, $V:=H_0^1(\Omega)$ and $D:= H^2(\Omega)\cap H_0^1(\Omega)$. The norm and the scalar product in $H$ are denoted by $|\cdot|$ and $(\cdot, \cdot)_H$,  respectively.   It is known that the embeddings 
$$
D\hookrightarrow\ \hookrightarrow V\hookrightarrow\ \hookrightarrow H
$$ and each space is dense in the following one. Hereafter,  $\hookrightarrow\ \hookrightarrow$ denotes a compact embedding. 

\noindent $W^{m,p}(\Omega)$ for $m$ integer and $p\geq 1$ is a Banach space of elements in $ v\in L^p(\Omega)$ such that their generalized derivatives $D^\alpha v\in L^p(\Omega)$ for $|\alpha|\leq m$. The norm of a element  $v\in W^{m,p}(\Omega)$ is given by 
$$
\|u\|_{m,p}=\sum_{|\alpha|\leq m}\|D^\alpha u\|_p,
$$ where $\|\cdot\|_p$ is the norm in $L^p(\Omega)$. The closure of $C_0^\infty(\Omega)$ in  $W^{m,p}(\Omega)$ is denoted by $W_0^{m, p}(\Omega)$.   When $p=2$, we write $H^m(\Omega)$ and $H^m_0(\Omega)$ for $W^{m, 2}(\Omega)$ and $W^{m, 2}_0(\Omega)$, respectively. 

\noindent $W^{s, p}(\Omega)$ for real number $s\geq 0$ consists of elements  $v$ so that 
$$
\|v\|_{s,p}=\Big[\|v\|^p_{m,p} +\sum_{|\alpha|=m}\int_\Omega\int_\Omega \frac{|D^\alpha v(x)-D^\alpha v(x')|^p}{|x-x'|^{N+\sigma p}} dxdx' \Big]^{1/p}< +\infty,
$$ where $s=m+\sigma$ with $m=[s]$ and $\sigma\in (0, 1)$. It is known that 
\begin{align}
  W^{s,p}(\Omega)=\big(W^{k_0, p}(\Omega), W^{k_1, p}(\Omega)\big)_{\theta, 1},  
\end{align} where $s=(1-\theta)k_0 +\theta k_1$, $0<\theta <1$, $k_0<k_1$ and $1\leq p\leq \infty$. Here $\big(W^{k_0, p}(\Omega), W^{k_1, p}(\Omega)\big)_{\theta, 1}$ is a interpolation space in $K-$method. 

\noindent $W^{2l, l}_p(Q_T)$ for $l$ integer and $p\geq 1$ is a Banach space of elements in $ v\in L^p(Q_T)$ such that their generalized derivatives $D^r_t D^s_x v\in L^p(\Omega)$ with $2r+s\leq 2l$. The norm of a element  $v\in W^{2l,l}_p(\Omega)$ is given by 
\begin{align*}
    \big\| v\big\|_{p, Q_T}^{(2l)}=\sum_{j=0}^{2l} \langle\langle v\rangle\rangle^{(j)}_{p, Q_T},
\end{align*} where 
\begin{align*}
  \langle\langle v\rangle\rangle^{(j)}_{p, Q_T}:=\sum_{2l+s=j}\|D^r_t D^s_x v\|_{L^p(Q_T)}.   
\end{align*}

Given $\alpha\in (0, 1)$ and $T>0$, we denote by $C^{0, \alpha}(\Omega)$ and $C(\bar Q_T)$ the space of H\"{o}lder continuous functions on $\Omega$ and the space of continuous functions on $\bar Q_T$, respectively. 

Let $H^{-1}(\Omega)$ be the dual of $H_0^1(\Omega)$. We define the following function spaces
\begin{align*}
& H^1(Q)=W^{1,1}_2(Q)=\{y\in L^2(Q):  \frac{\partial y}{\partial x_i}, \frac{\partial y}{\partial t}\in L^2(Q)\},\\
&V_2(Q)=L^\infty(0, T; H)\cap L^2(0, T; V),\\
&W(0, T)=\{y\in L^2(0, T; H^1_0(\Omega)): y_t\in L^2(0, T; H^{-1}(\Omega))\},\\
& W^{1,1}_2(0, T; V, H)=\{y\in L^2([0, T], V): \frac{\partial y}{\partial t}\in L^2([0, T], H) \},\\
& W^{1,1}_2(0, T; D, H)=\{y\in L^2([0, T], D): \frac{\partial y}{\partial t}\in L^2([0, T], H) \},\\
 &U_T=L^\infty(Q_T),\\ 
 &Y_T =\big\{y\in  W^{1,1}_2(0, T; D, H) \cap  W^{2,1}_p(Q_T)| Ay\in L^p(Q_T)\big\}.
\end{align*} Hereafter, we assume that 
\begin{align} \label{Dimension1}
    1+\frac{N}{2}<p< N+2
\end{align}  and $q$ is the conjugate number of $p$. Then $Y_T$ is a Banach space under the graph norm
\begin{align*}
    \|y\|_{Y_T}:= \|y\|_{W^{1, 1}_2(0, T; D, H)} + \|y\|_{W^{2,1}_p(Q_T)} +\|Ay\|_{L^p(Q_T)}. 
\end{align*} Note that if $y\in W^{2,1}_p(Q_T)$, then $y_t\in L^p(0, T; L^p(\Omega))$. By  the proof of Lemma \ref{Lemma-stateEq}, we have
\begin{align}\label{keyEmbed1}
  W^{2,1}_p(Q_T)\hookrightarrow\  \hookrightarrow C(\bar Q_T).  
\end{align} Hence 
\begin{align}
Y_T \hookrightarrow\ \hookrightarrow  C(\bar Q_T).
\end{align} Besides, we have the following continuous embeddings: 
\begin{align}\label{keyEmbed3}
W^{1, 1}_2(0, T; D, H)\hookrightarrow C([0, T], V),\quad W(0, T)\hookrightarrow C([0, T], H),
\end{align} where $C([0, T], X)$ stands for the space of continuous mappings $v: [0, 1]\to X$ with $X$ is a Banach space. 

Recall that given  $y_0\in H$ and $u\in L^2(0, T; H)$,  a function $y\in W(0, T)$ is said to be a weak solution of the semilinear parabolic equation \eqref{P2}-\eqref{P3} if 
\begin{align}\label{WeakSol}
\begin{cases}
\langle y_t, v\rangle + \sum_{i,j=1}^n \displaystyle \int_\Omega a_{ij}D_iy D_j v dx +(\psi(\cdot, \cdot, y), v)_H =(u, v)_H\quad \forall v\in H_0^1(\Omega)\quad \text{a.a.}\quad t\in [0, T]\\
y(0)=y_0.
\end{cases}
\end{align} 
If a  weak solution $y$ such that $y\in W^{1,1}_2(0, T; D, H)$ and $\psi(\cdot, \cdot, y)\in L^2(0, T; H)$ then we have $y_t +Ay +\psi(\cdot, \cdot, y)\in L^2(0, T; H)$ and
$$
\langle y_t, v\rangle + (Ay, v) +(\psi(\cdot, \cdot, y), v) =(u, v)\quad \forall v\in H_0^1(\Omega).
$$ Since $H_0^1(\Omega)$ is dense in $H=L^2(\Omega)$, we have  
$$
(y_t, v) + (Ay, v) +(\psi(\cdot,  \cdot, y), v) =(u, v)\quad \forall v\in H.
$$  Hence
$$
y_t + Ay + \psi(\cdot, \cdot, y) =u\quad \text{a.a.}\ t\in [0, T],\ y(0)=y_0.
$$ In this case we say $y$ is a {\it strong solution} of \eqref{P2}-\eqref{P3}. From now on a  solution to \eqref{P2}-\eqref{P3} is understood a strong solution. 

Let us make the following assumptions which are related to the state equation. 

\begin{enumerate}
\item [$(H1)$] Coefficients ${a_{ij}}=a_{ji} \in C(\bar \Omega)$ for every $1 \le i,j \le N$, satisfy the uniform ellipticity conditions, i.e.,  there exists a number $\gamma > 0$ such that 
\begin{align}
\gamma {\left| \xi  \right|^2} \le \sum\limits_{i,j = 1}^N {{a_{ij}}\left( x \right){\xi _i}{\xi _j}} \,\,\,\,\,\,\,\,\forall \xi  \in {{\mathbb R}^N}\,\,\,{\rm {for\,\,\,a.a.}}\,\,\,x \in \Omega .
\end{align}

\item[$(H2)$]  The mapping $\psi:\bar\Omega\times[0, +\infty)\times{\mathbb R} \to {\mathbb R}$ is a Carath\'{e}odory and for each $(x, t)\in \bar\Omega\times[0, +\infty)$, $\psi(x, t, \cdot)$ is of class $C^2$ in $y$ and satisfies the following properties:
\begin{align*}
& \psi_y(x, t, y)\geq 0\quad\forall (x,t, y)\in \bar\Omega\times[0, +\infty)\times{\mathbb R},\ \psi(\cdot, \cdot, 0)\in L^p(Q_T)\quad\forall T>0,\\
&\forall M>0, \exists k_{\psi,M}>0: |\psi_y(x, t, y)| \le k_{\psi,M}, \\
&|\psi(x, t_1, y_1)-\psi(x, t_2, y_2)|+ |\psi'(x, t_1, y_1)-\psi'(x, t_2, y_2)|+|\psi''(x,t_1, y_1)-\psi''(x, t_2, y_2)|\\
&\leq k_{\psi,M}(|t_1-t_2|+|y_1-y_2|)
\end{align*} for all $y, y_i, t_i$ satisfying $ |y|, |t_i|, |y_i|\leq M$ with $i=1,2$. 

\item[$(H3)$] $y_0\in H_0^1(\Omega)\cap W^{2-\frac{2}p, p}(\Omega)$, where $p>1$ satisfying \eqref{Dimension1}. 

\end{enumerate}

Note that $(H3)$ makes sure that $y_0\in C(\bar \Omega)$.  Since $2> 2-\frac{2}p>1$, Theorem 1.4.3.2 in \cite{Grisvard} implies that   
$$
W^{2,p}(\Omega)\hookrightarrow\ \hookrightarrow W^{2-\frac{2}p, p}(\Omega)\hookrightarrow\ \hookrightarrow W^{1,p}(\Omega).
$$   Hence the condition $y_0\in W^{2-\frac{2}p, p}(\Omega)$  relaxes the condition $y_0\in W^{2,p}(\Omega)$.

\begin{lemma}\label{Lemma-stateEq} Suppose that $(H1), (H2)$ and $(H3)$ are valid. Then for each $T>0$ and  $u\in L^p(Q_T)$ with $p$ satisfying \eqref{Dimension1},  the state equation \eqref{P2}-\eqref{P3} has a unique  solution   $y\in Y_T$ and there exist positive constants $C_1>0$ and $C_2>0$ such that 
\begin{align}\label{KeyInq0}
    \|y\|_{C(\bar Q_T)} +\|\psi(\cdot, \cdot, y)\|_{L^2(0, T; H)}\leq C_1 (\|u\|_{L^p(Q_T)} + \|y_0\|_{L^\infty(\Omega)}+\|\psi(\cdot, \cdot, 0)\|_{L^p(Q_T)})
\end{align} and 
\begin{align}\label{KeyInq1}
\|y_t\|_{L^p(Q_T)}+\|y\|_{L^p(Q_T)}+\|Dy\|_{L^p(Q_T)}+\|D^2 y\|_{L^p(Q_T)}\leq C_2(\|y_0\|_{C(\bar\Omega)} +  \|u\|_{L^p(Q_T)}+ \|\psi(\cdot, \cdot, 0)\|_{L^p(Q_T)}), 
\end{align} where $C_2$ depends on $T, \Omega, \|y_0\|_{C(\bar\Omega)}$,  $\|u\|_{L^p(Q_T)}$ and $\|\psi(\cdot, \cdot, 0)\|_{L^p(Q_T)}$. 
\end{lemma}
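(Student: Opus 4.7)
The plan is to proceed by a three-stage bootstrap: first establish a unique weak solution in $W(0,T)$ using the monotonicity of the nonlinearity, then upgrade it to an $L^\infty$ bound via comparison with a linear auxiliary problem, and finally invoke maximal $L^p$-parabolic regularity to place $y$ in $Y_T$.

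For the existence step, observe that $(H1)$ together with $\psi_y\ge 0$ from $(H2)$ makes the operator $y\mapsto Ay+\psi(\cdot,\cdot,y)$ monotone and coercive on $V=H_0^1(\Omega)$, uniformly in $t$. Since $\psi$ grows at most linearly in $y$ on bounded sets and $p>1+N/2\ge 2$ gives $u\in L^2(Q_T)$, a standard Galerkin / monotone-operator argument (Lions, Showalter) produces a unique weak solution $y\in W(0,T)$ of \eqref{WeakSol}. Uniqueness follows from testing the equation for the difference of two solutions against itself and applying Gronwall together with $\psi_y\ge 0$.

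To obtain the pointwise bound, introduce the auxiliary linear problem
\begin{align*}
\bar y_t+A\bar y=u-\psi(\cdot,\cdot,0),\quad \bar y|_{\Sigma_T}=0,\quad \bar y(0)=y_0.
\end{align*}
Because $p>1+N/2$, the classical $L^p$--$L^\infty$ estimate for linear parabolic equations (Ladyzhenskaya--Solonnikov--Ural'tseva, Ch.~III) yields $\bar y\in C(\bar Q_T)$ with $\|\bar y\|_{C(\bar Q_T)}\le C(\|u\|_{L^p(Q_T)}+\|\psi(\cdot,\cdot,0)\|_{L^p(Q_T)}+\|y_0\|_{L^\infty(\Omega)})$. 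Setting $w=y-\bar y$ and using $\psi(\cdot,\cdot,y)-\psi(\cdot,\cdot,0)=c(x,t)\,y$ with $c=\psi_y(\cdot,\cdot,\xi)\ge 0$, one tests the equation for $w$ with $(w-M)^+$ and $-(w+M)^-$ where $M=\|\bar y\|_{C(\bar Q_T)}$; the sign of $c$ combined with ellipticity gives $|y|\le 2M$ a.e.\ in $Q_T$, which is the first half of \eqref{KeyInq0}. The bound on $\|\psi(\cdot,\cdot,y)\|_{L^2(0,T;H)}$ then follows from the local Lipschitz estimate in $(H2)$ together with $\psi(\cdot,\cdot,0)\in L^p(Q_T)\hookrightarrow L^2(Q_T)$.

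Finally, with $y$ pointwise bounded by $M$, the Lipschitz condition in $(H2)$ plus $\psi(\cdot,\cdot,0)\in L^p(Q_T)$ gives $f:=u-\psi(\cdot,\cdot,y)\in L^p(Q_T)$. Viewing the state equation as the linear problem $y_t+Ay=f$ with initial datum $y_0\in W^{2-2/p,p}(\Omega)\cap H_0^1(\Omega)$ (the correct trace space at $t=0$ for $W^{2,1}_p$-regularity), maximal $L^p$-regularity (Theorem 9.1, Ch.~IV of Ladyzhenskaya--Solonnikov--Ural'tseva) produces $y\in W^{2,1}_p(Q_T)$ together with \eqref{KeyInq1}. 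The embedding \eqref{keyEmbed1}, which holds precisely because $p>1+N/2$, gives $y\in C(\bar Q_T)$; since $p\ge 2$ and $\Omega$ is bounded, $W^{2,1}_p(Q_T)\hookrightarrow W^{1,1}_2(0,T;D,H)$ and $Ay\in L^p(Q_T)$, so $y\in Y_T$. The principal obstacle is the $L^\infty$ estimate: since we only assume $\psi(\cdot,\cdot,0)\in L^p$ rather than $L^\infty$, a direct maximum-principle argument is unavailable, and the comparison with $\bar y$ together with the sign of $\psi_y$ is the device that makes the estimate work in exactly the critical range $p>1+N/2$, which simultaneously supplies the endpoint embedding $W^{2,1}_p\hookrightarrow C(\bar Q_T)$.
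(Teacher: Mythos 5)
Your overall architecture coincides with the paper's: weak solution, then an $L^\infty$ bound, then a bootstrap to $W^{2,1}_p(Q_T)$ by rewriting the state equation as the linear problem $y_t+Ay=u-\psi(\cdot,\cdot,y)\in L^p(Q_T)$ and invoking maximal $L^p$-regularity (Ladyzhenskaya et al., Theorem 9.1), followed by the embedding $W^{2,1}_p(Q_T)\hookrightarrow C(\bar Q_T)$ and the identification $y\in Y_T$. The difference is that the paper simply cites \cite{Casas-2023} for the existence of the bounded weak solution and the estimate \eqref{KeyInq0}, whereas you supply your own argument (comparison with the linear problem for $u-\psi(\cdot,\cdot,0)$ plus Stampacchia truncation using $\psi_y\ge 0$); your truncation computation is correct and is essentially how the cited estimates are proved. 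Note also that the paper devotes a substantial portion of this proof to actually establishing the embedding \eqref{keyEmbed1} via Gagliardo--Nirenberg and interpolation, which you take as given.

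The one genuine soft spot is your existence step. Hypothesis $(H2)$ only bounds $\psi_y$ on sets $\{|y|\le M\}$ with a constant $k_{\psi,M}$ depending on $M$, so $\psi$ may grow superlinearly (e.g.\ exponentially) in $y$; in that case $y\mapsto\psi(\cdot,\cdot,y)$ does not map $L^2(0,T;V)$ into the dual space and the ``standard Galerkin / monotone-operator argument'' does not apply as stated. The standard repair is to truncate $\psi$ outside $\{|y|\le M\}$, solve the truncated problem, derive the $L^\infty$ bound \emph{uniformly in the truncation level} (your comparison argument does exactly this, since the constant $M=\|\bar y\|_{C(\bar Q_T)}$ does not depend on the truncation), and then conclude that the truncated solution solves the original equation. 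In other words, your $L^\infty$ estimate must come \emph{before}, not after, the existence claim; with that reordering the argument closes and matches what \cite{Casas-2023} actually does.
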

\begin{proof} We first claim that $y_0\in C(\bar\Omega)$. In fact, by \eqref{Dimension1}, $(2-\frac{2}p)-\frac{N}p\in(0, 1)$. By  \cite[Theorem 1.4.3.1 and Theorem 1.4.4.1]{Grisvard}, we have $W^{2-\frac{2}p, p}(\Omega)\hookrightarrow C^{0, \alpha}(\Omega)$ with $\alpha=(2-\frac{2}p)-\frac{N}p.$ This implies that $y_0\in C(\bar\Omega)$. Hence $y_0(x)=0$ for $x\in\Gamma$ and the claim is justified.  

By the same argument as in  proofs of \cite[Theorem 2.1]{Casas-2023} and \cite[Lemma 2.2.III]{Casas-2023}, we see that for each $u\in L^p(0, T; L^p(\Omega))=L^p(Q_T)$ with $p>\frac{N+2}{2}$, the state equation has a unique solution $y\in L^\infty(Q)\cap W^{1, 1}_2(0, T; V, H)$ such that $\psi(\cdot, \cdot, y)\in L^2(0, T; H)$ and inequalities \eqref{KeyInq0} is fulfilled.


By $(H2)$, we have $\psi(\cdot, \cdot, y)\in L^\infty(Q_T)$. Let us show that $y\in W^{2,1}_p(Q_T)$. For this,  we consider equation
$$
z_t + Az = u-\psi(x, t, y),\  z(0)=y_0.
$$ Since $u-\psi(\cdot, \cdot, y)\in L^p(Q_T)$, \cite[Theorem 9.1, p. 341]{Ladyzhenskaya} implies that the equation has a unique solution $z\in W^{2,1}_p(Q_T)$ such that $y(0)=y_0$ and $y(x, t)=0$ for  $(x, t)\in\Gamma\times[0, T]$. Besides,   there exists a constant $C>0$ such that 
\begin{align}\label{KeyInq2}
 \|z\|^{(2)}_{p, Q_T}\leq C(\|y_0\|_{2-\frac{2}{p},p} +\|u-  \psi(\cdot, \cdot, y)\|_{L^p(Q_T)}). 
\end{align} 
By definition of norm $\|y\|^{(2)}_{p, Q_T}$, we have 
\begin{align} 
\|z_t\|_{L^p(Q_T)}+\|z\|_{L^p(Q_T)}+\|Dz\|_{L^p(Q_T)}+\|D^2 z\|_{L^p(Q_T)}\leq C(\|y_0\|_{2-\frac{2}{p},p} +\|u-  \psi(\cdot, \cdot, y)\|_{L^p(Q_T)}). 
\end{align} This implies that 
$$
z\in W^{2,1}_p(Q_T)=\{z\in L^p(0, T; W^2_p(\Omega))| z_t\in L^p(0, T; L^p(\Omega))\}. 
$$ We now prove  that \eqref{keyEmbed1} is valid and so $z\in C(\bar Q_T)$. Indeed, consider the space $W^{s,p}(\Omega)$, where
 $s\in (\frac{N}p, 2-\frac{2}p)$. By \cite[Theorem 1.4.3.2]{Grisvard},  the embeddings
\begin{align}
W^{2,p}(\Omega)\hookrightarrow\ \hookrightarrow W^{s,p}(\Omega)\hookrightarrow\ \hookrightarrow L^p(\Omega).
\end{align}
 By \eqref{Dimension1},  $0<s-N/p< 1$. Using  \cite[Theorem 1.4.3.1 and Theorem 1.4.4.1]{Grisvard} again, we have   
\begin{align}\label{ContEmb}
W^{s,p}(\Omega)\hookrightarrow C^{0,\alpha}(\Omega)=C^{0,\alpha}(\bar\Omega)\hookrightarrow C(\bar\Omega). 
\end{align} Here we used the fact that if $\Omega$ is bounded domain, then $C^{0,\alpha}(\Omega)=C^{0,\alpha}(\bar\Omega)$. 
By the generalized Gagliardo-Nirenberg inequality in  \cite[Theorem 1, part A]{Brezis2}, we have 
\begin{align*}
\|v\|_{s, p}\leq C\|v\|^{\theta}_{2,p}\|v\|^{1-\theta}_{0, p} \quad \quad  \forall v \in W^{2,p}(\Omega), 
\end{align*}
where $\theta=s/2\in (\frac{N}{2p}, 1  -  \frac{1}{p})\subset (0, 1)$. By \cite[section 1.10.1, Lemma (a), p. 61]{Triebel}, we have 
\begin{align}
\big( L^p(\Omega), W^{2,p}(\Omega)\big)_{\theta,1}\hookrightarrow W^{s,p}(\Omega)\hookrightarrow L^p(\Omega). 
\end{align}
Using   \cite[Theorem 7.4.3, p. 268]{Amann} for the case  $p_\theta=p_0=p_1=p$ and $0< 1-\theta -\frac{1}p$,  we obtain 
\begin{align*}
W^{2,1}_p(Q_T)\hookrightarrow\ \hookrightarrow C([0, T], W^s_p(\Omega)).
\end{align*}
From this and \eqref{ContEmb}, we get 
\begin{align*}
W^{2,1}_p(Q_T)\hookrightarrow\ \hookrightarrow C([0, T], C(\bar\Omega))\hookrightarrow C(\bar Q_T)  
\end{align*} and \eqref{keyEmbed1} is justified.

Since $z(x, t)=0$ on $\Gamma\times [0, T]$, we get  $z\in C([0, T], W^{1, p}_0(\Omega))\subset L^2(0, T; H^1_0(\Omega))$. By the uniqueness, we have $z=y$. 

Let  $M> C_1 (\|u\|_{L^p([0, T], H)} + \|y_0\|_{L^\infty(\Omega)})+ \|\psi(\cdot, \cdot, 0)\|_{L^p(Q_T)}$. By $(H2)$ and \eqref{KeyInq0},   there exists $k_M>0$ such that 
$$
|\psi(x, t, y)|=|\psi(x, t, y)-\psi(x, t, 0)|+ |\psi(x, t, 0)|\leq k_M|y| +|\psi(x, t,0)|. 
$$ Combining this with \eqref{KeyInq2} and \eqref{KeyInq0}, we have 
\begin{align*}
\|y\|^{(2)}_{p, Q_T}&\leq C\big(\|y_0\|_{2-\frac{2}{p},p} +\|u\|_{L^p(Q_T)} +k_M\|y\|_{L^p(Q_T)} +\|\psi(\cdot, \cdot, 0)\|_{L^p(Q_T)}\big)\\
&\leq C(\|y_0\|_{2-\frac{2}{p},p} +\|u\|_{L^p(Q_T)} +k'_M\|y\|_{C(Q_T)}+\|\psi(\cdot, \cdot, 0)\|_{L^p(Q_T)})\\
&\leq C_2(\|y_0\|_{C(\bar\Omega)} +  \|u\|_{L^p(Q_T)}+\|\psi(\cdot, \cdot, 0)\|_{L^p(Q_T)}) 
\end{align*} for some constant $C_2>0$ which depends on $T, \Omega, \|y_0\|_{C(\bar\Omega)}$,  $\|u\|_{L^p(Q_T)}$ and $\|\psi(\cdot, \cdot, 0)\|_{L^p(Q_T)}$. By definition of norm $\|y\|^{(2)}_{p, Q_T}$, we have 
\begin{align} 
\|y_t\|_{L^p(Q_T)}+\|y\|_{L^p(Q_T)}+\|Dy\|_{L^p(Q_T)}+\|D^2 y\|_{L^p(Q_T)}\leq C_2(\|y_0\|_{C(\bar\Omega)} +  \|u\|_{L^p(Q_T)})
\end{align} which is inequality \eqref{KeyInq1}.  Since $Ay= u-\psi(\cdot, \cdot, y) - y_t\in L^p(Q_T)$,  $y\in Y_T$. The  lemma is proved.
\end{proof}

\medskip

Let us consider the linearized equation 
\begin{align}\label{LinearizedEq1}
   y_t + A y + c(x,t)y =u,\quad y(0)=\phi_0. 
\end{align} 

\begin{lemma}\label{Lemma-LinearizedEq}
Suppose $u\in L^p(Q_T)$, $c\in L^\infty (Q_T)$ and  $\phi_0\in W^{2-\frac{2}p, p}(\Omega)\cap H^1_0(\Omega)$ with $p$ satisfying \eqref{Dimension1}. Then equation \eqref{LinearizedEq1} has a unique solution $y\in Y_T$ and there exist positive  constant $C_1$ and $C_2$ which depend on $T$ such that 
\begin{align}
  & \|y_t\|_{L^p(Q_T)}+\|y\|_{L^p(Q_T)}+\|Dy\|_{L^p(Q_T)}+\|D^2 y\|_{L^p(Q_T)}\leq C_1\big(\|u\|_p +\|\phi_0\|_{W^{2-\frac{2}p, p}(\Omega)}\big),  \label{KeyInq2.1} \\
  &\|y\|_{C(\bar Q_T)} \leq C_2 \big(\|u\|_p +\|\phi_0\|_{W^{2-\frac{2}p, p}(\Omega)}\big).\label{KeyInq3}
\end{align}
\end{lemma}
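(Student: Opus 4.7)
The plan is to follow the same three-step strategy used in the proof of Lemma \ref{Lemma-stateEq}, with the nonlinear term $\psi(\cdot,\cdot,y)$ replaced by the linear lower-order term $c\,y$. Since the equation is linear with $c\in L^\infty(Q_T)$, the analysis is in fact simpler. First I would establish existence and uniqueness of a weak solution $y\in W(0,T)\cap C(\bar Q_T)$ together with an $L^\infty$ a priori bound
$$\|y\|_{C(\bar Q_T)}\le C\big(\|u\|_{L^p(Q_T)}+\|\phi_0\|_{L^\infty(\Omega)}\big),$$
with $C$ depending on $T$ and $\|c\|_{L^\infty(Q_T)}$, by standard linear parabolic theory. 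The sign of $c$ is irrelevant: one may either invoke Gronwall on the energy identity for $|y|^2$, or apply the exponential substitution $y=e^{\lambda t}z$ with $\lambda>\|c\|_{L^\infty(Q_T)}$ to reduce to a nonnegative potential and then invoke the maximum-principle arguments of \cite[Theorem 2.1]{Casas-2023} and \cite[Lemma 2.2.III]{Casas-2023} already used in the proof of Lemma \ref{Lemma-stateEq}. The inclusion $\phi_0\in W^{2-\frac{2}{p},p}(\Omega)\hookrightarrow C(\bar \Omega)$, verified in the proof of Lemma \ref{Lemma-stateEq}, legitimises the use of $\|\phi_0\|_{L^\infty(\Omega)}$ on the right-hand side.

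Having the $L^\infty$ bound in hand, the second step is to upgrade to $W^{2,1}_p$-regularity. Rewriting the equation as $y_t+Ay=f$ with $f:=u-cy\in L^p(Q_T)$ and $y|_{\Sigma_T}=0$, one has
$$\|f\|_{L^p(Q_T)}\le \|u\|_{L^p(Q_T)}+\|c\|_{L^\infty(Q_T)}\,|Q_T|^{1/p}\,\|y\|_{C(\bar Q_T)}.$$
Since $\phi_0\in W^{2-\frac{2}{p},p}(\Omega)\cap H^1_0(\Omega)$ meets the compatibility condition for homogeneous Dirichlet data, \cite[Theorem 9.1, p.~341]{Ladyzhenskaya} applies and yields
$$\|y\|^{(2)}_{p,Q_T}\le C\big(\|\phi_0\|_{W^{2-\frac{2}{p},p}(\Omega)}+\|f\|_{L^p(Q_T)}\big).$$
Combining this with the $L^\infty$ estimate from the previous paragraph absorbs the $\|y\|_{C(\bar Q_T)}$ term into the right-hand side and produces the estimate \eqref{KeyInq2.1} by unfolding the definition of $\|y\|^{(2)}_{p,Q_T}$.

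For the continuity estimate \eqref{KeyInq3}, I would invoke the compact embedding $W^{2,1}_p(Q_T)\hookrightarrow\ \hookrightarrow C(\bar Q_T)$ established in the proof of Lemma \ref{Lemma-stateEq} (this is where the assumption $p>1+\frac{N}{2}$ enters, through the interpolation/Amann trace-type embedding). This gives $\|y\|_{C(\bar Q_T)}\le C\|y\|^{(2)}_{p,Q_T}$, which together with \eqref{KeyInq2.1} yields \eqref{KeyInq3}. Finally, $Ay=u-cy-y_t\in L^p(Q_T)$ shows $y\in Y_T$. The main technical hurdle is the $L^\infty$ a priori bound in step one without any sign condition on $c$; the exponential substitution circumvents this cleanly. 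Everything else is a faithful replay of the machinery already assembled for Lemma \ref{Lemma-stateEq}, so no new analytical tools are required.
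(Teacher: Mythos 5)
Your proof is correct, but it takes a genuinely longer route than the paper. The paper's proof is essentially a one-line appeal to \cite[Theorem 9.1, p.~341]{Ladyzhenskaya}, which treats the full linear operator $\partial_t + A + c$ with a bounded zeroth-order coefficient directly: that theorem needs no sign condition on $c$, delivers unique solvability in $W^{2,1}_p(Q_T)$ together with \eqref{KeyInq2.1} in a single step, and the sup-norm bound \eqref{KeyInq3} then follows from the embedding \eqref{keyEmbed1}. You instead run a two-stage bootstrap: first an $L^\infty$ a priori bound (via the exponential substitution $y=e^{\lambda t}z$ with $\lambda>\|c\|_{L^\infty}$, which reduces to a nonnegative potential so that the maximum-principle machinery from the semilinear case applies), and only then the $L^p$ maximal regularity for the principal part $\partial_t+A$ with $cy$ absorbed into the right-hand side. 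Both arguments are sound; what yours buys is self-containedness, since it reuses exactly the ingredients already assembled for Lemma \ref{Lemma-stateEq}, and what it costs is the extra $L^\infty$ step, which the cited linear theorem renders unnecessary. One small omission: you never explicitly address uniqueness, though it is immediate from the linear energy identity and Gronwall's inequality (and is in any case part of the statement of the cited theorem).
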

\begin{proof} The conclusion of the lemma follows directly from \cite[Theorem 9.1, p. 341]{Ladyzhenskaya}. By embedding \eqref{keyEmbed1},   we have estimate \eqref{KeyInq3}.
\end{proof}

\section{Main results}

Let $L=L(x, t, y, u):\Omega\times[0, +\infty)\times \mathbb{R}\times\mathbb{R}\to\mathbb{R}$ be a function. Given a triple $ (T_*, y_*, u_*)\in\Phi$, the symbols  $L[x, t]$,  $L_t[x, t], L_y[x, t]$, $L_u[x, t]$, $L[\cdot, \cdot]$, etc., stand for   $L(x, t, y_*(x, t), u_*(x, t))$, $L_\xi(x, t, y_*(x,t), u_*(x, t))$,  $L_y(x, t, y_*(x, t), u_*(x, t))$, $L_u(x, t, y_*(x, t), u_*(x, t))$, $L(\cdot, \cdot, y_*(\cdot, \cdot), u_*(\cdot, \cdot))$, etc., respectively. Let $\psi_i=\psi_i(T, \zeta)$. The symbols  $\psi_i[T_1, T_2], \psi_{iT}[T_1, T_2], \psi_{i\zeta}[T_1, T_2]$, etc., stand for $\psi_i(T_1, y_*(T_2))$, $\psi_{iT}(T_1, y_*(T_2))$, $\psi_{i\zeta}(T_1, y_*(T_2))$, etc.,  respectively.    Define 
\begin{align*}
    K_{T_*} :=  \{v \in L^\infty(Q_{T_*}) : a  \le  v(x, t) \le b \ {\rm a.a.} \ (x, t) \in Q_{T_*}\}. 
\end{align*}
Let $\ell: \Omega\times[0, +\infty)\times\mathbb{R}\times\mathbb{R}\to\mathbb{R}$ be a mapping which stands for $L$ and $g$. We impose the following hypotheses. 

\begin{itemize}

    \item [$(H4)$] (i) $\ell$ is a Carath\'{e}odory  function and  for each $x\in \Omega$, $\ell(x, \cdot, \cdot, \cdot)$ is of class $C^2$ and satisfies the following property:  for each $M>0$, there exists $k_{\ell,M}>0$ such that 
\begin{align*}
    &|\ell(x, t_1, y_1, u_1)-\ell(x, t_2, y_2, u_2)|+ |\ell_t(x, t_1, y_1, u_1)-\ell_t(x, t_2, y_2, u_2)|+\\
    & |\ell_y(x, t_1, y_1, u_1)-\ell_y(x, t_2, y_2, u_2)|+|\ell_u(x, t_1, y_1, u_1)-\ell_u(x, t_2, y_2, u_2)|+\\
    &|\ell_{tt}(x, t_1, y_1, u_1)-\ell_{tt}(x, t_2, y_2, u_2)|+|\ell_{yy}(x, t_1, y_1, u_1)-\ell_{yy}(x, t_2, y_2, u_2)|+\\
    &|\ell_{yu}(x, t_1, y_1, u_1)-\ell_{yu}(x, t_2, y_2, u_2)|+ |\ell_{uu}(x, t_1, y_1, u_1)-\ell_{uu}(x, t_2, y_2, u_2)|  \\
    &\leq k_{\ell,M}(|t_1-t_2|+ |y_1-y_2|+ |u_1-u_2|)
\end{align*}  
for all $(x, t_i, y_i, u_i)\in \Omega\times[0, +\infty)\times \mathbb{R}\times\mathbb{R}$ satisfying $|t_i|, |y_i|, |u_i|\leq M$ with $i=1,2$. Furthermore, we require that the functions $\ell_y(\cdot, \cdot, 0, 0), \ell_u(\cdot, \cdot, 0, 0), \ell_{yy}(\cdot, \cdot, 0, 0), \ell_{yu}(\cdot, \cdot, 0, 0)$ and $\ell_{uu}(\cdot, \cdot, 0, 0)$ belong to $L^\infty(Q_{T_*})$. 

$(ii)$ $\psi_j:[0, \infty)\times C(\bar\Omega)\to\mathbb{R}$ is of class $C^2$ in $(T, \zeta)$ with $j=0,1,..., m$.

\item [$(H5)$] The function $\frac{1}{g_u(\cdot, \cdot, y, u)}$ belongs to $L^p (Q_T)$ for all $(T, y, u)\in \Phi$.

\item [$(H6)$]  There exists $(\widehat T, \widehat y, \widehat u) \in \mathbb{R}\times Y_{T_*} \times U_{T_*}$ such that the following conditions are fulfilled: 

\noindent \quad \quad $(i)$ \quad    $\dfrac{\partial \widehat y}{\partial t} +  A\widehat y  +  \psi_t[x,t]\frac{\widehat T t}{T_*}+  \psi_y[x, t]\widehat y - \widehat u+ \frac{\widehat T}{T_*}(Ay_*+\psi[x, t] - u_*)=0, \quad    \widehat y(0)=0$;

\noindent \quad \quad $(ii)$ \quad $\psi_i[T_*, T_*] +\psi_{iT}[T_*, T_*]\widehat T  + \psi_{i\zeta}[T_*, T_*]\widehat y(T_*) < 0$, $i = 1, 2, ..., m;$

\noindent \quad \quad $(iii)$  \quad $g[x, t] + g_t[x,t]\frac{\widehat T t}{T_*}+  g_y[x, t]\widehat y + g_u[x, t]\widehat u  \in {\rm int}(K_{T_*})$. 

\item [$(H7)$] For each $(T_0, w_0)\in [0, +\infty)\times C(\bar\Omega)$, the following mappings are continuous:
\begin{align*}
    &H\ni h\mapsto \psi_{i\zeta}(T_0, w_0)h,\\
    &H\ni h\mapsto \psi_{i\zeta\zeta}(T_0, w_0)[h, h],\\
    &\mathbb{R}\times H\ni (T, h)\mapsto \psi_{iT\zeta}(T_0, w_0)[T,h].
\end{align*} 
\end{itemize}

Note that hypothesis $(H4)$ makes sure that $J$ and $g$ are of class $C^2$ on $Y_{T}\times U_{T}$ for each fixed $T>0$. Meanwhile, $(H5)$ guarantees that the Lagrange multipliers belong to $L^1(Q_{T_*})$ and $(H6)$ ensures that  the Lagrange multipliers are normal. Assumption $(H7)$ requires that the mapping $\psi_{i\zeta}(T_0, w_0), \psi_{i\zeta\zeta}(T_0, w_0) $ and $\psi_{iT\zeta}(T_0, w_0)$ have continuous extension on $H$ and $\mathbb{R}\times H$. For an example the function $\psi_0(T, y(T))= T+ \int_\Omega f(x, y(x, T))dx$ satisfies $(H7)$ whenever $f$ is a Carath\'{e}odory function and  $f(x, \cdot)$ is of class $C^2$. 
Let us give other example under which $(H5)$ and $(H6)$ are fulfilled.

\begin{example} {\rm   
    Let $b= 0$ and $g(x, t, y, u) = - u^3 -  u(y^2 + 1)$. It is easy to see that $\frac{1}{g_u[\cdot, \cdot]} \in L^\infty (Q_{T_*})$ and so $(H5)$ is valid. Sine $g[x, t] \le b = 0$, we have $0 \le u_* \in L^\infty(Q_{T_*})$. The maximum principle implies that the solution $y_*$ of the state equation corresponding to $u_*$ satisfies the property that $y_* \ge 0$ on $\bar Q_{T_*}$ and so $y_* (x, T_*) \ge 0$ a.a. $x \in \Omega$.  Let $\delta > 0$ and take $\widehat u \in L^\infty(Q_{T_*})$ such that $\widehat u(x, t) \ge \delta > 0$ a.a. $(x, t) \in Q_{T_*}$. By Lemma  \ref{Lemma-LinearizedEq}, equation 
    \begin{align*}
        \dfrac{\partial \widehat y}{\partial t} +  A\widehat y +  \psi_y[x, t] \widehat y - \widehat u = 0, \quad    \widehat y(0)=0
    \end{align*}
    has unique solution $\widehat y \in Y_{T_*}$.   And by the  maximum principle  we also have $\widehat y(x, t) \ge 0$ on  $Q_{T_*}$. Let $\psi_i$ defined by 
    \begin{align*}
        \psi_i(T, y(T)) = -  \int_\Omega T ^2[y^{2i + 1}(T) + 1] dx, \quad i = 1, 2, ..., m.
    \end{align*}
    Take
    \begin{align*}
        a :=  - &\Big[\|u_*\|^3_{L^\infty(Q_{T_*})} + \|u_*\|_{L^\infty(Q_{T_*})}(\| y_*\|^2_{L^\infty(Q_{T_*})} + 1)   + 2\|y_*\|_{L^\infty(Q_{T_*})}\|u_*\|_{L^\infty(Q_{T_*})} \|\widehat y\|_{L^\infty(Q_{T_*})} \\
         &+  3\| u_*\|^2_{L^\infty(Q_{T_*})}\|\widehat u\|_{L^\infty(Q_{T_*})} + \|\widehat u\|_{L^\infty(Q_{T_*})}(\|y_*\|^2_{L^\infty(Q_{T_*})} + 1)  
          \Big]  - \delta 
    \end{align*}
    (more general, $a < 0$ and $|a|$ is large enough). With  $\widehat y$ and  $\widehat u$ as above and $\widehat T = 0$, it's easy to see that condition $(i)$ of $(H6)$ is satisfied. Moreover, we have 
    \begin{align*}
        \psi_i[T_*, T_*] + \psi_{i\zeta}[T_*, T_*]\widehat y(T_*) &=  -  \int_\Omega T_*^2[y_*^{2i + 1}(T_*) + 1] dx - (2i + 1)\int_\Omega T_*^2y_*^{2i}(T) \widehat y(T_*) dx\le  -  T_*^2|\Omega| < 0, 
    \end{align*}
    for $i = 1, 2, ..., m$. Hence $(ii)$ is satisfied with $(0, \widehat y, \widehat u)$.  Since $ y_*,  u_*, \widehat y \ge 0$, $\widehat u \ge \delta$ on $\bar Q_{T_*}$ and definition of number $a$, we have
    \begin{align*}
        a < a + \delta \le g[x, t] + g_y[x, t]\widehat y + g_u[x, t]\widehat u \le - \delta < 0 = b \quad {\rm a.a. \   on} \ Q_{T_*}.  
    \end{align*}
    Hence $g[x, t] + g_y[x, t]\widehat y + g_u[x, t]\widehat u \in {\rm int}(K_{T_*})$ and so $(iii)$ is satisfied with $(0, \widehat y, \widehat u)$. Thus $(H6)$ is valid.}    
    \end{example}

Given a triple $(T, y, u)\in \Phi$, we define the extension of $y$ on the right by setting 
\begin{align*}
    y_e(x, t)=
    \begin{cases}
        y(x, t)\quad &\text{if}\quad (x, t)\in\bar\Omega\times [0, T]\\
        y(x, T)\quad  &\text{if}\quad (x, t)\in\bar\Omega\times (T,  +\infty). 
    \end{cases}
\end{align*} Then $y_e$ is continuous on $\bar\Omega\times [0, +\infty)$. Moreover, for each $T_0\in (T, +\infty)$, $y_e$ is uniform continuous on the compact set $\bar\Omega\times[0, T_0]$. 

\begin{definition}\label{Def-LocalOptim}   A triple $(T_*,  y_*,  u_*) \in \Phi$ is said to be a locally optimal solution of $(P)$ if $T_*>0$ and there exists some $\varepsilon > 0$ such that 
    \begin{align}\label{StrongOptimDef}
     &J(T_*,  y_*,  u_*) \le J(T, y, u) \quad  \forall (T, y, u)\in \Phi\ \text{satisfying}\notag \\ 
     &|T -  T_*| +  \max_{(x, t) \in \bar \Omega \times [0, T_*\vee T]} |y_e(x, t) -  y_{*e}(x, t)| +{\rm esssup}_{(x,t)\in\Omega\times [0, T_*]}|u(x, \frac{Tt}{T_*})-u_*(x, t)| \leq \varepsilon. 
    \end{align} Hereafter $T_*\vee T=\max (T_*, T)$. 
\end{definition} It is noted that when $T=T_*$, Definition \ref{Def-LocalOptim}  becomes a definition for locally optimal solution to the problem with fixed final time $T_*$. 

 Let us denote by $\mathcal C_0[(T_*, y_*, u_*)]$ the set of vectors $(T, y, u)\in \mathbb{R}\times Y_{T_*} \times U_{T_*}$ which satisfy the following conditions:
\begin{itemize}
    \item [$(c_1)$] $\psi_{0T}[T_*, T_*]T +\psi_{0\zeta}[T_*, T_*]y(T_*)  +    \displaystyle \int_{Q_{T_*}}\Big( \frac{T}{T_*} L[x, t] +L_t[x, t]\frac{Tt}{T_*}+ L_y[x, t]y + L_u[x, t]u  \Big) dxdt \leq 0$;

    \item [$(c_2)$]   $\dfrac{\partial y}{\partial t} +  Ay +  \psi_t[x, t] \dfrac{Tt}{T_*}+  \psi_y[x, t]y - u  + \dfrac{T}{T_*}(Ay_* +\psi[x, t]  -u_*) =  0, \quad    y(0)=0$;

    \item [$(c_3)$] $\psi_{iT}[T_*, T_*]T +\psi_{i\zeta}[T_*, T_*]y(T_*) \le 0$  \ \  for $i\in \{1,2,.., m| \psi_i[T_*, T_*]=0\}$;

    \item [$(c_4)$] $g_t[\cdot, \cdot] \dfrac{Tt}{T_*}+ g_y[\cdot, \cdot]y + g_u[\cdot, \cdot]u  \in {\rm cone}\big(K_{T_*} -g[\cdot, \cdot]\big)$.
\end{itemize}
Define $\mathcal{C}[(T_*, y_*, u_*)] = \bar {\mathcal{C}_0[(T_*, y_*, u_*)]}$,  which is the closure of $\mathcal{C}_0[(T_*, y_*, u_*)]$ in $\mathbb{R}\times Y_{T_*} \times U_{T_*}$. The set $\mathcal C[(T_*, y_*, u_*)]$ is called a critical cone to the problem $(P)$. Each vector $d \in \mathcal C[(T_*, y_*, u_*)]$  is called a critical direction to the $(P)$ at $(T_*, y_*, u_*)$. It is clear that $\mathcal C[(T_*, y_*, u_*)]$ is a closed convex cone containing $0$.

We have the following main result on necessary optimality conditions.

\begin{theorem}\label{main-theorem}
Suppose that assumptions $(H1)$-$(H5)$ are satisfied and $(T_*,  y_*,  u_*)\in\Phi$  is a locally strongly optimal solution to $(P)$. Then for each $d=(T, y, u)\in \mathcal{C}_0[(T_*, y_*, u_*) ]$, there exist Lagrange multipliers $\lambda \in \mathbb{R}_+$, $\mu = (\mu_1, \mu_2, ..., \mu_m) \in \mathbb{R}^m$, $\widetilde\varphi \in L^q(Q_{T_*})  \cap L^1(0, T_*; W^{1, 1}_0(\Omega))$, $\widetilde e \in L^1(Q_{T_*})$ and an absolutely continuous function $\widetilde \phi : [0, T_*] \to \mathbb{R}$ satisfying the following conditions:

\noindent $(i)$  (the adjoint equations)  
\begin{align}\label{kq1}
    \begin{cases}
    - \dfrac{\partial \widetilde\varphi}{\partial t} +  A^*\widetilde\varphi + \psi_y[\cdot, \cdot]\widetilde\varphi = -  \lambda  L_y[\cdot, \cdot] - \widetilde  e g_y[\cdot, \cdot]  \quad {\rm in}\  Q_{T_*}, \\
      \widetilde \varphi = 0 \quad {\rm on}\ \Sigma_{T_*}, \\
      \widetilde\varphi(T_*) = -\lambda\psi_{0\zeta}[T_*, T_*]- \sum_{i=1}^m \mu_i \psi_{i\zeta}[T_*, T_*] \quad {\rm in}\quad  \Omega
    \end{cases}
\end{align}
and
\begin{align}
\label{kq2}
    \begin{cases}
        \widetilde \phi'(t) &=  - \displaystyle \int_\Omega\big(\lambda L_t[x, t]  +  \widetilde \varphi \psi_t[x, t] + \widetilde e g_t[x, t]\big) dx \quad {\rm in} \ (0, T_*), \\
        \widetilde\phi(T_*) &= 0
    \end{cases}
\end{align}
where $A^*$ is the adjoint operator of $A$, which is  defined by $A^* \widetilde\varphi = - \sum_{i, j = 1}^N D_i (a_{ij}(x)D_j\widetilde\varphi)$;

\noindent $(ii)$  (optimality condition for $u_*$) 
\begin{align}\label{kq3}
    \lambda  L_u[x, t]  -  \widetilde \varphi(x, t) +  \widetilde e(x, t) g_u[x, t]  =  0 \quad {\rm a.a.} \ (x, t) \in Q_{T_*}; 
\end{align}

\noindent $(iii)$   (optimality condition for $T_*$) 
\begin{align}\label{kq4}  
       \int_0^{T_*} \widetilde \phi(t)dt  =  -  \int_{Q_{T_*}}\big(\lambda L[x, t]+  \widetilde\varphi \big(A y_* +\psi[\cdot, \cdot]-u_*\big)\big) dxdt 
      -   \lambda T_* \psi_{0\zeta}[T_*, T_*]  - T_* \sum_{i=1}^m \mu_i \psi_{i\zeta}[T_*, T_*];
\end{align}

\noindent $(iv)$ (the complementary conditions)
\begin{align}  
    &\ \mu_i \ge 0,\ \ {\rm and} \ \  \mu_i \psi_i [T_*, T_*]  =  0, \quad  i = 1, 2, ..., m,   \label{kq5}  \\
    &\widetilde e(x, t)\in N([a, b], g[x, t])\ \text{a.a.}\ (x, t)\in Q_{T_*};
\end{align}

\noindent $(v)$ (the nonnegative second-order condition)  
\begin{align} \label{NSOC-P}
&D^2_{(T, y, u)}\mathcal{L}^P(T_*, y_*, u_*, \lambda, \widetilde\varphi, \mu, \widetilde\phi, \widetilde e)[d,d]:=  \lambda \Big(\psi_{0TT}[T_*, T_*]T^2 + 2\psi_{0 T \zeta}[T_*, T_*]T y(T_*) +\psi_{0 \zeta\zeta }[T_*, T_*]y(T_*)^2\Big) \notag\\
&+ \lambda \int_{Q_{T_*}}\Big( L_{tt}[x, t](\frac{Tt}{T_*})^2   + 2L_{ty}[x, s]\frac{Tt}{T_*} y(x,t)+  2L_{tu}[x, t]\frac{Tt}{T_*}u(x,t) \Big) dxdt \notag\\
&+ \lambda\int_{Q_{T_*}}\Big( L_{yy}[x, t]y(x,t)^2 + L_{uu}[x, t]u(x,t)^2  +  2L_{yu}[x, t]y(x,t)u(x,t)\Big) dxdt  \nonumber \\
 &+ 2\lambda  \int_{Q_{T_*}}\frac{T}{T_*} \Big(L_t[x, t]\frac{Tt}{T_*}  + L_y[x, t]y(x,t)  +  L_u[x, t]u(x,t)\Big)  dxdt\notag\\
 &+  \int_{Q_{T_*}}\widetilde\varphi(x,t)\Big(\psi_{tt}[x, t](\frac{Tt}{T_*})^2   + 2\psi_{ty}[x, s]\frac{Tt}{T_*} y(x,t)+ \psi_{yy}[x, t]y(x,t)^2 \Big)  dxdt   \nonumber \\
 &+  \int_{Q_{T_*}}  2 \frac{T}{T_*}\widetilde\varphi(x,t) \Big(A y + \psi_t[x, t]\frac{Tt}{T_*}  + \psi_y[x, t]y - u\Big)dxdt \nonumber \\
 &+  \int_{Q_{T_*}} \widetilde e(x, s) \big(g_{tt}[x, t](\frac{Tt}{T_*})^2  +  g_{yy}[x, t]y^2 + g_{uu}[x, t]u^2  + 2g_{ty}[x, t]\frac{Tt}{T_*} y + 2g_{tu}[x, t]\frac{Tt}{T_*} u  +  2g_{yu}[x, t]yu\big)  dxdt  \nonumber \\
 &+  \sum_{i = 1}^m \mu_i \Big[\psi_{iTT}[T_*, T_*]T^2 + 2\psi_{i T \zeta}[T_*, T_*]T y(T_*) +\psi_{i\zeta \zeta}[T_*, T_*] y(T_*)^2\Big]   \geq 0.
     \end{align} In addition, if  $(H6)$ is satisfied then  $\lambda=1$ and 
\begin{align} \label{NSOC-P+}
   \max_{(1, \widetilde\varphi, \mu, \widetilde\phi)\in\Lambda[(T_*, y_*, u_*)]} D^2_{(T, y, u)}\mathcal{L}^P(T_*, y_*, u_*, 1, \widetilde\varphi, \mu, \widetilde\phi, \widetilde e)[d,d]\geq 0\quad \forall d\in \mathcal{C}[(T_*, y_*, u_*)].
\end{align}
\end{theorem}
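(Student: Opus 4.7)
The plan is to carry out the reduction announced in Section 1: rescale $t = T_* s$ with $s \in [0, 1]$ to convert $(P)$ into a fixed-horizon problem $(P_1)$ on $Q_1 := \Omega \times (0, 1)$ in which $(T, v)$ with $v(x, s) := u(x, T s)$ acts as the control, then cast $(P_1)$ as an instance of the abstract program (MP) of Section 2 and invoke Proposition \ref{KeyProp}. Setting $\tilde y(x, s) := y(x, T s)$, \eqref{P2}-\eqref{P3} becomes $\tilde y_s + T A \tilde y + T \psi(x, T s, \tilde y) = T v$ with $\tilde y(0) = y_0$, the cost becomes $\psi_0(T, \tilde y(1)) + T \int_{Q_1} L(x, Ts, \tilde y, v)\, dxds$, and the mixed and final-point constraints transfer to $Q_1$ in the obvious way. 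By the topologies used in Definition \ref{Def-LocalOptim}, a locally optimal solution of $(P)$ corresponds to a locally optimal solution of $(P_1)$.

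Next, I would pack $z = (T, \tilde y, v) \in Z := \mathbb{R} \times Y_1 \times L^\infty(Q_1)$, take $W := L^p(Q_1) \times W^{2-2/p, p}(\Omega)$, $E := L^\infty(Q_1)$ with $K := K_1$, and define $f$, $F$, $G$, $H_i$ by the formulas dictated by the rescaled problem. Hypotheses (A1) and (A2) follow from the definition of $K_1$ together with (H2) and (H4). For (A3) and surjectivity of $DF(z_0)$, observe that the linearization of $F$ with respect to $\tilde y$ at $z_0$ is, up to the scaling factor $T_*$, the linear parabolic operator of Lemma \ref{Lemma-LinearizedEq}; solvability of that equation for arbitrary right-hand side in $W$ yields surjectivity. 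Proposition \ref{KeyProp}(a) then produces a nonzero multiplier $(\lambda, (\varphi_1, \varphi_2), \mu, e^*) \in \Lambda[z_0]$ with $\lambda \ge 0$, $\mu_i \ge 0$, $\mu_i \psi_i[T_*, T_*] = 0$, $e^* \in N(K_1, G(z_0))$, and the second-order inequality \eqref{VOP-SOC1}.

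The heart of the argument is to unfold $D_z \mathcal{L}(z_0, \cdot) = 0$ into its three blocks. Stationarity in $\tilde y$, combined with a Riesz-type representation (here (H5) is needed to rule out a singular part of $e^*$ and to identify it with an $L^1$ density), produces the adjoint PDE \eqref{kq1} for $\widetilde \varphi(x, t) := \varphi_1(x, t/T_*)$, together with the terminal transversality condition carrying $\lambda \psi_{0\zeta}$ and $\mu_i \psi_{i\zeta}$. Stationarity in $v$ gives the pointwise condition \eqref{kq3} with $\widetilde e(x, t) := e^*(x, t/T_*)$. Stationarity in the scalar $T$ collects contributions from $\psi_{0T}$, the factor $T$ in front of the integral cost, the inner time-derivatives of $L$, $\psi$, $g$, and the residual $Ay_* + \psi[\cdot,\cdot] - u_*$; introducing $\widetilde \phi$ as the solution of the ODE \eqref{kq2} and integrating by parts in $t$ reorganises this single scalar equation into the form \eqref{kq4}. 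Expanding the second derivatives of $f$, $F$, $G$, $H_i$ in $(T, \tilde y, v)$ and substituting into \eqref{VOP-SOC1}, each $T$-derivative produces either a factor $s = t/T_*$ or a multiplicative copy of the state-equation residual, reproducing \eqref{NSOC-P} term by term. I expect this bookkeeping, rather than any conceptual step, to be the principal obstacle.

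Finally, for the normality and max-form assertion, note that (H6)(i)-(iii) is precisely the Mangasarian-Fromowitz condition \eqref{Man-From} for the rescaled problem with $\widetilde z = (\widehat T, \widehat y, \widehat u)$, so (A4) holds. Proposition \ref{KeyProp}(b) then yields $\lambda \neq 0$, which after rescaling we fix to $\lambda = 1$, and the set $\Lambda_*[z_0]$ is nonempty and compact in $\sigma(E^*, E) \times \tau(\mathbb{R}^m) \times \sigma(W^*, W)$. The continuity requirements imposed by (H7) make the Lagrangian Hessian jointly continuous in the relevant topologies; hence the max over $\Lambda_*[z_0]$ is attained and depends continuously on $d$, and passing from $\mathcal{C}_0[(T_*, y_*, u_*)]$ to its closure $\mathcal{C}[(T_*, y_*, u_*)]$ yields \eqref{NSOC-P+}.
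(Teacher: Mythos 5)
Your proposal is correct and follows essentially the same route as the paper: rescale via $t=Ts$ to the fixed-horizon problem $(P_1)$, cast it as an instance of (MP), apply Proposition \ref{KeyProp} (using (H5) to identify the cone multiplier with an $L^1$ density and (H6) as the Mangasarian--Fromowitz condition for normality), and rescale the multipliers back. The only structural difference is that the paper keeps the reparametrization $\xi(s)=\int_0^{(\cdot)}T\,d\tau$ as an extra variable with its own equality constraint, so that $\widetilde\phi$ arises from the associated Radon-measure multiplier, whereas you eliminate $\xi=Ts$ and recover $\widetilde\phi$ a posteriori as the solution of the ODE \eqref{kq2}; the two devices are equivalent after an integration by parts.
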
 

To deal with second-order sufficient conditions, we need to enlarge the critical cone $\mathcal C[(T_*, y_*, u_*)]$. For this we define $\mathcal {C}'[(T_*, y_*, u_*)]$ being a set of vectors $(T,  y, u)\in \mathbb{R}\times W_2^{1,1}(0, T_*,H, D) \times L^2(Q_{T_*})$  satisfying the following conditions:
\begin{itemize}
    \item [$(c_1')$] $\psi_{0T}[T_*, T_*]T +\psi_{0\zeta}[T_*, T_*]y(T_*)  +    \displaystyle \int_{Q_{T_*}}\Big( \frac{T}{T_*} L[x, t] +L_t[x, t]\frac{Tt}{T_*}+ L_y[x, t]y + L_u[x, t]u  \Big) dxdt \leq 0$;

    \item [$(c_2')$] $\dfrac{\partial y}{\partial t} +  Ay +  \psi_t[x, t] \dfrac{Tt}{T_*}+  \psi_y[x, t]y - u  + \dfrac{T}{T_*}(Ay_* +\psi[x, t]  -u_*) =  0, \quad    y(0)=0$;

    \item [$(c_3')$] $\psi_{iT}[T_*, T_*]T +\psi_{i\zeta}[T_*, T_*]y(T_*) \le 0$  \ \  for $i\in \{1,2,.., m| \psi_i[T_*, T_*]=0\}$;

    \item [$(c_4')$] $g_t[x, t] \dfrac{Tt}{T_*}+ g_y[x, t]y(x,t) + g_u[x, t]u(x,t)  \in T\big([a, b], g[x, t]\big)$ for a.a. $(x, t)\in Q_{T_*}$.
\end{itemize}

The following theorem gives second-order sufficient optimality conditions for locally optimal solution to $(P)$.

\begin{theorem}\label{main-theorem+} Suppose assumptions $(H1)$-$(H5)$ and $(H7)$, $T_*>0$ and $(T_*,  y_*,  u_*)\in\Phi$.  Assume that there exist Lagrange multipliers $\lambda=1$, $\mu = (\mu_1, \mu_2, ..., \mu_m) \in \mathbb{R}^m$, $\widetilde\varphi \in L^q(Q_{T_*})  \cap L^1(0, T_*; W^{1, 1}_0(\Omega))$, $\widetilde e \in L^1(Q_{T_*})$ and an absolutely continuous function $\widetilde \phi : [0, T_*] \to \mathbb{R}$ such that conditions $(i)-(iv)$ of Theorem \ref{main-theorem} are fulfilled. Furthermore,  assume that the following conditions are fulfilled:

\noindent $(v)$  (the strictly second-order condition) 
    \begin{align} \label{SSOC-P}
    D^2_{(T, y, u)}\mathcal{L}^P(T_*, y_*, u_*, 1, \widetilde\varphi, \mu, \widetilde\phi, \widetilde e)[(\widehat T, \widehat y, \widehat u), (\widehat T, \widehat y, \widehat u)]>0\quad \forall (\widehat T, \widehat y, \widehat u)\in \mathcal{C}'[(T_*, y_*, u_*)]\setminus\{(0,0,0)\};
\end{align}

\noindent $(vi)$ (the Legendre-Clebsch condition) there is a number  $\Lambda_0 > 0$ such that
    \begin{align}\label{L-C-condition-P}
       \ L_{uu}[x, t] + \widetilde{e}(x, t)g_{uu}[x, t] \geq \Lambda_0   \quad {\rm a.a.} \ (x, t) \in Q_{T_*}. 
    \end{align}
    Then  there exist numbers $\varepsilon_0 > 0$ and $\kappa_0 > 0$ such that 
    \begin{align}
        J (T, y, u) \ge  J (T_*, y_*, u_*)   + \kappa_0\Big((T - T_*)^2  +    \frac{1}{T_*}\int_{Q_{T_*}}|u(x, \frac{Tt}{T_*}) - u_*(x, t)|^2 dxdt   \Big) 
    \end{align}
    for all $(T, y, u) \in \Phi \cap N\big((T_*, y_*, u_*), \varepsilon_0\big)$.     In particular, $(T_*, y_*, u_*)$ is a locally optimal solution to  $(P)$. Here
$$
\Phi \cap N\big((T_*, y_*, u_*), \varepsilon_0\big):=\Big\{(T, y, u)\in\Phi: {\rm dist}\big[(T, y, u), (T_*, y_*, u_*)\big]\leq \epsilon_0\Big\},
$$ with 
\begin{align*}
{\rm dist}[(T, y, u), (T_*, y_*, u_*)]:=
|T -  T_*| +  \max_{(x, t) \in \bar \Omega \times [0, T_*\vee T]} |y_e(x, t) -  y_{*e}(x, t)| +  \mathop{\rm esssup}\limits_{(x,t)\in\Omega\times [0, T_*]}|u(x, \frac{Tt}{T_*})-u_*(x, t)|.
\end{align*}
\end{theorem}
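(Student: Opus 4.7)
The strategy is to reduce the time-optimal problem to a fixed final-time problem by the substitution $t = (T/T_*)s$ with $s \in [0, T_*]$, so that the rescaled control $v(x,s) := u(x, Ts/T_*)$ together with the scalar $T$ becomes the new control variable on the fixed cylinder $Q_{T_*}$. The resulting reduced problem $(P_1)$ falls inside the abstract framework of Proposition \ref{KeyProp}, with the Lagrangian $\mathcal L^P$ from \eqref{NSOC-P}. I would then argue by contradiction. If the quadratic growth inequality fails, then there is a sequence $(T_n, y_n, u_n) \in \Phi$ with ${\rm dist}[(T_n, y_n, u_n), (T_*, y_*, u_*)] \to 0$ and
\[
J(T_n, y_n, u_n) < J(T_*, y_*, u_*) + \tfrac{1}{n}\rho_n^2, \qquad \rho_n^2 := (T_n - T_*)^2 + \tfrac{1}{T_*}\!\int_{Q_{T_*}}\!\bigl|u_n(x, T_n t/T_*) - u_*(x,t)\bigr|^2 dx\, dt.
\]

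Set $\tau_n := (T_n - T_*)/\rho_n$ and $v_n(x,t) := [u_n(x, T_n t/T_*) - u_*(x,t)]/\rho_n$, so that $\tau_n^2 + \|v_n\|_{L^2(Q_{T_*})}^2/T_* = 1$. After extraction of a subsequence, $\tau_n \to \tau$ in $\mathbb R$ and $v_n \rightharpoonup v$ weakly in $L^2(Q_{T_*})$. Set $y_n^\sharp := (y_n(\cdot, T_n\cdot/T_*) - y_*)/\rho_n$. A standard differentiation of the rescaled state equation shows that $y_n^\sharp$ satisfies a perturbed linearized equation whose right-hand side converges, so Lemma \ref{Lemma-LinearizedEq} together with the compact embedding $W^{2,1}_p(Q_{T_*}) \hookrightarrow\hookrightarrow C(\bar Q_{T_*})$ supplied by Lemma \ref{Lemma-stateEq} provides a strong $C(\bar Q_{T_*})$-limit $\widehat y$ solving the equation in $(c_2')$ with data $(\tau, v)$. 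Passing to the limit in the rescaled pointwise constraint $a \le g \le b$ and in the cost-decrease inequality yields $(\tau, \widehat y, v) \in \mathcal C'[(T_*, y_*, u_*)]$; it is precisely this weak-limit construction that forces the sufficient condition to be imposed on the larger cone $\mathcal C'$ rather than on $\mathcal C$.

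Next, a second-order Taylor expansion of $\mathcal L^P$ at $(T_*, y_*, u_*)$ in the direction $(\tau_n, y_n^\sharp, v_n)$, combined with the KKT conditions $(i)$--$(iv)$ (which cancel the first-order part against feasible variations) and the decreased-cost assumption, gives after division by $\rho_n^2$:
\[
\tfrac{1}{2}\, D^2_{(T,y,u)}\mathcal L^P(T_*, y_*, u_*, 1, \widetilde\varphi, \mu, \widetilde\phi, \widetilde e)\bigl[(\tau_n, y_n^\sharp, v_n), (\tau_n, y_n^\sharp, v_n)\bigr] \le \tfrac{1}{n} + o(1).
\]
I split this quadratic form into a part that is continuous under the strong $C(\bar Q_{T_*})$-convergence of $y_n^\sharp$ and the weak $L^2$-convergence of $v_n$ (this uses $(H7)$ to handle the terminal-time terms $\psi_{i\zeta\zeta}[T_*, T_*] y_n^\sharp(T_*)^2$) and the pure control term $\int_{Q_{T_*}}\!(L_{uu}[x,t] + \widetilde e(x,t) g_{uu}[x,t])\, v_n^2\, dx\, dt$, which by the Legendre--Clebsch condition \eqref{L-C-condition-P} dominates $\Lambda_0 \|v_n\|_{L^2}^2$ and is weakly lower semicontinuous. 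Hence $D^2_{(T,y,u)}\mathcal L^P[(\tau, \widehat y, v), (\tau, \widehat y, v)] \le 0$. If $(\tau, \widehat y, v) \ne (0,0,0)$ this contradicts \eqref{SSOC-P}. Otherwise $\tau = 0$, $v = 0$, and $\widehat y = 0$ by uniqueness in Lemma \ref{Lemma-LinearizedEq}; but then the normalization gives $\|v_n\|_{L^2}^2/T_* \to 1$, and the Legendre--Clebsch bound yields $\liminf_n \int (L_{uu} + \widetilde e g_{uu})\, v_n^2 \ge \Lambda_0 T_* > 0$, while the remaining terms of the quadratic form tend to zero; this contradicts the decreased-cost inequality.

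The main obstacle is exactly this passage to the limit in the quadratic form along weakly (not strongly) convergent $v_n$: the Legendre--Clebsch condition is not a technical luxury but is structurally forced by the lack of weak continuity of the $u^2$-term. A subsidiary technical point is controlling the nonlinear remainder of the state equation uniformly in $n$ when the physical domain $Q_{T_n}$ itself varies with $T_n$; this is handled by the extension $y_{*e}$ used in Definition \ref{Def-LocalOptim} together with the uniform $C^2$-Lipschitz bounds on $\psi$ from $(H2)$ and the $L^p$-estimates of Lemma \ref{Lemma-stateEq} applied over a neighborhood of $T_*$.
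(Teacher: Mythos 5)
Your proposal follows essentially the same route as the paper: rescale to a fixed final time, argue by contradiction with a normalized sequence, extract weak limits landing in the enlarged cone $\mathcal C'$, split the second-order form into a weakly continuous part and the control-quadratic part, and use the Legendre--Clebsch condition for weak lower semicontinuity and for the final contradiction in the degenerate case. One step is overstated: the normalized controls $v_n$ are bounded only in $L^2(Q_{T_*})$, not in $L^p$ with $p>1+\frac N2$, so Lemma \ref{Lemma-LinearizedEq} and the compact embedding $W^{2,1}_p\hookrightarrow\hookrightarrow C(\bar Q_{T_*})$ do not apply to $y_n^\sharp$ and you cannot claim a strong $C(\bar Q_{T_*})$-limit; you only get boundedness in $W^{1,1}_2(0,T_*;D,H)$, hence strong convergence in $L^2$ via Aubin's lemma and convergence of the terminal values in $H$, which (together with $(H7)$) is exactly what the rest of your argument actually needs.
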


\begin{remark} {\rm By using similar arguments as in the proof of \cite[Proposition 2.6]{Kien-2022}, we can show that  if  strictly second-order condition \eqref{SSOC-P} and the Legendre-Clebsch condition \eqref{L-C-condition-P} are satisfied, then  the following strongly second-order condition holds: 
\begin{align*} 
    \exists \gamma_0>0:\  D^2_{(T, y, u)}\mathcal{L}^P(T_*, y_*, u_*, 1, \widetilde\varphi, \mu, \widetilde\phi, \widetilde e)[(\widehat T, \widehat y, \widehat u), (\widehat T, \widehat y, \widehat u)]\geq \gamma_0\big(\widehat{T}^2 + \|\widehat u\|_2^2\big)\  \forall (\widehat T, \widehat y, \widehat u)\in \mathcal{C}'[(T_*, y_*, u_*)].
\end{align*}  Conversely, by a modification of the proof of \cite[Lemma 5.1]{Troltzsch-2000}, we can prove that  if the above strongly second-order condition is satisfied,  $e\in L^\infty (Q_{T_*})$ and $\psi_{i\zeta\zeta}[\cdot, \cdot]=0$, then  the strictly second-order condition and the Legendre-Clebsch condition are fulfilled.  }
\end{remark}

\section{Reduced problem}

Recall that $Q_1=\Omega\times(0, 1)$. We put 
\begin{align*}
    U_1=  L^\infty(Q_1), \ \
    Y_1= \Big\{\zeta \in W^{1, 1}_2(0, 1; D, H) \cap W^{2,1}_p(Q_1)| \  A\zeta \in L^p(Q_1)\Big\}.
\end{align*} Then $Y_1$ is a Banach space under the graph norm
\begin{align*}
    \|\zeta\|_{Y_1}:= \|\zeta\|_{W^{1, 1}_2(0, 1; D, H)} + \|\zeta\|_{W^{2,1}_p(Q_1)} +   \big\|A\zeta\big\|_{L^p(Q_1)}.
\end{align*}

To establish optimality conditions for time-optimal problem $(P)$, we need to change  variables to transform $(P)$ into a  problem with a fixed terminal time. Taking any  $T>0$, we define a function $\xi :[0, 1]\to [0, T]$ by setting 
\begin{align*}
    \xi(s)=\int_0^sTd\tau=Ts. 
\end{align*}
Let
\begin{align*}
    \zeta(x, s):= y(x, \xi(s)), \ \  v(x, s):= u(x, \xi(s)). 
\end{align*} By changing variable $t=\xi(s)=Ts$ in $(P)$,  
 we obtain the following  optimal control problem with fixed final time: 
\begin{align*}
(P_1)\quad \quad
\begin{cases}
    &\widehat J(\xi, T,  \zeta, v) := \psi_0(T, \zeta(1))+ \displaystyle\int_0^1 \int_\Omega TL(x, \xi(s),  \zeta(x, s), v(x, s)) dxds   \to  \inf \\
    &{\rm s.t.}\notag\\
    &\dfrac{\partial \zeta}{\partial s} + TA\zeta + T\psi(x, \xi, \zeta) = Tv \quad \text{in} \ Q_1, \quad  \zeta(x, s)=0 \quad  \text{on} \ \Sigma_1 = \Gamma\times [0, 1], \\
    &\zeta(0)=y_0 \quad \text{in} \ \Omega, \\
    &\xi = \displaystyle \int_0^{(\cdot)}Td\tau,  \\
    &\psi_i(T, \zeta(1)) \le 0,   \quad  i=1,2,..., m, \\
    &a\leq g(x, \xi(s), \zeta(x, s), v(x, s)) \le b \quad {\rm a.a.} \ x \in \Omega, \ \forall s  \in [0, 1]. 
    \end{cases}
\end{align*} We shall denote by  $\Phi_1$ the feasible set of $(P_1)$ and put
\begin{align*}
    Z = C([0,1], \mathbb{R}) \times  \mathbb{R}\times Y_1\times U_1.    
\end{align*} Given a vector $z=(\xi, T, \zeta, v)\in Z$, we define
\begin{align*}
\|z\|_*:= \|\xi\|_{C([0,1], \mathbb{R})}+|T| +\|\zeta\|_{C(\bar Q_1)}+\|v\|_{U_1},\ \
\|z\|_Z:= \|\xi\|_{C([0,1], \mathbb{R})}+|T| +\|\zeta\|_{Y_1}+\|v\|_{U_1}. 
\end{align*} Given $z_0\in Z$ and $r>0$, we denote by $B_{*Z}(z_0,r)$ and $B_Z(z_0, r)$ the balls center at $z_0$ and radius $r$ in norm $\|\cdot\|_*$ and $\|\cdot\|_Z$, respectively. 
\begin{definition} Vector $z_*=(\xi_*, T_*, \zeta_*, v_*)\in \Phi_1$ is a locally optimal solution to $(P_1)$ if there exists $\delta>0$ such that 
\begin{align}\label{DfLocalOptimSol}
\widehat{J}(z_*)\leq \widehat{J}(z) \quad \forall z\in \Phi_1\ \text{satisfying}\ \|z-z_*\|_*\leq \delta.  
\end{align}     
\end{definition} Since $Y_1\hookrightarrow C(\bar Q_1)$, there exists $\gamma>0$ such that $\|z\|_*\leq \gamma \|z\|_Z$ for all $z\in Z$.  Therefore,  if $\|z-z_*\|_Z\leq\frac{\delta}{\gamma}$, then $\|z-z_*\|_*\leq\delta$. Consequently,  if $z_*$ is a locally optimal solution to $(P_1)$ in norm $\|\cdot\|_*$, then it is also locally optimal solution to $(P_1)$  in norm $\|\cdot\|_Z$. 
 
\medskip

The following propositions give relations between optimal solutions of $(P)$ and $(P_1)$.

\begin{proposition} \label{relationPandP1}
 Suppose that $ (T_*, y_*,  u_*)\in  \Phi$  is a locally  optimal solution to $(P)$. Let   $\xi_*(s)=T_*s $ for $s\in [0,1]$ and    
    \begin{align*} 
     \zeta_* (x, s) := y_*(x, \xi_*(s)),\   v_*(x, s) := u_*(x,  \xi_*(s)). 
\end{align*} 
Then the vector  $( \xi_*, T_*,  \zeta_*,  v_*)$ is a locally optimal solution to $(P_1)$ and $\widehat J(\xi_*, T_*,  \zeta_*,  v_*) = J(T_*, y_*, u_*)$.  
\end{proposition}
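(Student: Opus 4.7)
The plan is to prove the proposition in three stages: first verify that $(\xi_*,T_*,\zeta_*,v_*)$ is feasible for $(P_1)$ and that the two cost values coincide, then show that any competitor for $(P_1)$ close to $(\xi_*,T_*,\zeta_*,v_*)$ in $\|\cdot\|_*$ gives rise, by undoing the change of variable, to a competitor for $(P)$ close to $(T_*,y_*,u_*)$ in the metric of Definition \ref{Def-LocalOptim}, and finally conclude via the local optimality of $(T_*,y_*,u_*)$.

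For the feasibility of $(\xi_*,T_*,\zeta_*,v_*)$, I would simply apply the chain rule to the state equation of $(P)$ under $t=T_*s$: since $\partial_s\zeta_*(x,s)=T_*\,\partial_t y_*(x,T_*s)$, multiplying \eqref{P2} by $T_*$ after composition yields $\partial_s\zeta_*+T_*A\zeta_*+T_*\psi(x,\xi_*(s),\zeta_*)=T_*v_*$, with $\zeta_*(0)=y_*(0)=y_0$; the endpoint and mixed-pointwise constraints \eqref{P4}--\eqref{P5} pass directly to the corresponding constraints of $(P_1)$ since $\zeta_*(1)=y_*(T_*)$ and $(x,\xi_*(s),\zeta_*(x,s),v_*(x,s))=(x,t,y_*(x,t),u_*(x,t))$. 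The equality $\widehat J(\xi_*,T_*,\zeta_*,v_*)=J(T_*,y_*,u_*)$ follows from the substitution $t=T_*s$, $dt=T_*\,ds$ in the integral part of the cost.

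For the local optimality transfer, fix $\varepsilon>0$ given by Definition \ref{Def-LocalOptim} applied at $(T_*,y_*,u_*)$ and let $(\xi,T,\zeta,v)\in\Phi_1$ satisfy $\|(\xi,T,\zeta,v)-(\xi_*,T_*,\zeta_*,v_*)\|_*\le\delta$ for a small $\delta$ to be chosen. Define $y(x,t):=\zeta(x,t/T)$ and $u(x,t):=v(x,t/T)$ on $Q_T$. A chain-rule computation, mirroring the first stage, shows $(T,y,u)\in\Phi$. I then estimate the three terms of the metric in Definition \ref{Def-LocalOptim}: the term $|T-T_*|$ is bounded by $\delta$; for the control term, using $t=T_*s$ gives $u(x,Tt/T_*)-u_*(x,t)=v(x,t/T_*)-v_*(x,t/T_*)$, hence the essential supremum over $(x,t)\in\Omega\times[0,T_*]$ coincides with $\|v-v_*\|_{L^\infty(Q_1)}\le\delta$; for the state term, I split the interval $[0,T\vee T_*]$ into $[0,T\wedge T_*]$ (where both $y_e$ and $y_{*e}$ are given by $\zeta$ and $\zeta_*$) and the remaining piece (where one of them is constant in $t$), and use the triangle inequality
\[
|\zeta(x,t/T)-\zeta_*(x,t/T_*)|\le\|\zeta-\zeta_*\|_{C(\bar Q_1)}+|\zeta_*(x,t/T)-\zeta_*(x,t/T_*)|,
\]
so uniform continuity of $\zeta_*$ on $\bar Q_1$, together with $|t/T-t/T_*|\le \delta/(T_*-\delta)$, makes the second term small; a symmetric estimate handles the boundary interval $[T\wedge T_*,T\vee T_*]$. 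Thus for $\delta$ small enough (depending on $\varepsilon$ and on the modulus of continuity of $\zeta_*$) the full metric distance is at most $\varepsilon$, so by local optimality of $(T_*,y_*,u_*)$ and the equality of costs $\widehat J(\xi,T,\zeta,v)=J(T,y,u)\ge J(T_*,y_*,u_*)=\widehat J(\xi_*,T_*,\zeta_*,v_*)$.

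The main obstacle is that the $\|\cdot\|_*$-closeness of $\zeta$ to $\zeta_*$ controls only the discrepancy at the same $s$-value, whereas the metric in Definition \ref{Def-LocalOptim} compares $y$ and $y_*$ at the same $t$-value, which corresponds to different $s$-values ($s=t/T$ versus $s=t/T_*$). This mismatch is resolved by exploiting the uniform continuity of $\zeta_*$ on the compact set $\bar Q_1$, which is why $T_*>0$ is essential (it keeps $t/T$ in $[0,1]$ uniformly for $\delta$ small) and why $\zeta_*\in C(\bar Q_1)$ (a consequence of $Y_{T_*}\hookrightarrow C(\bar Q_{T_*})$ established in Section 2) is used in a decisive way.
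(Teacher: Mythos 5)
Your proposal is correct and follows essentially the same route as the paper's proof: verify feasibility and equality of costs by the change of variable $t=Ts$, then transfer an $\varepsilon$-neighborhood in the metric of Definition \ref{Def-LocalOptim} to a $\delta$-ball in $\|\cdot\|_*$ by a triangle inequality that isolates the reparametrization mismatch and controls it by uniform continuity of the optimal state. The only (cosmetic) difference is that you invoke uniform continuity of $\zeta_*$ on $\bar Q_1$ and split $[0,T\vee T_*]$ at $T\wedge T_*$, whereas the paper applies uniform continuity of the extension $y_{*e}$ on $\bar\Omega\times[0,T_0]$, which handles the boundary interval without a case split; the two are equivalent up to rescaling the modulus of continuity.
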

\begin{proof}  By definition, $T_*>0$ and there exists $\epsilon > 0$  such that \eqref{StrongOptimDef} is valid. Fix a number $T_0> T_*$.  By the uniform continuity of $y_{*e}$ on $\bar\Omega\times [0,  T_0]$, there exists $\delta \in (0, \frac{\varepsilon}{4})$ such that 
\begin{align}
\label{0.1}
| y_{*e}(x, t_1)-y_{*e}(x, t_2)|\le \frac{\epsilon}{4}  \quad  \forall t_1, t_2\in [0,  T_0] \ \  \text{satisfying}\ \  |t_1-t_2| < \delta. 
\end{align} We can choose $\delta>0$ small enough such that $T>0$ whenever $|T-T_*|<\delta$.
Clearly, $(\xi_*, T_*,  \zeta_*, v_*) \in \Phi_1$.  Let $(\xi, T, \zeta, v)\in\Phi_1$ such that 
\begin{align}
\label{0.2}
 \|\xi-\xi_*\|_{C([0,1], \mathbb{R})} + |T-T_*|   + \|\zeta-\zeta_*\|_{C(\bar Q_1)} + \|v - v_*\|_{L^\infty(\bar Q_1)}   <\min(\delta, T_0-T_*). 
\end{align}  
We define $ y(x, t)=  \zeta(x, \xi^{-1}(t)),\   u(x, t)=  v(x, \xi^{-1}(t))$, where  $\xi^{-1}(t)=\frac{t}{T}$. It is easy to check that $(T, y, u) \in \Phi$. By  (\ref{0.2}) we have 
\begin{align*}
    |T - T_*|+ \|\xi-\xi_*\|_{C([0,1], \mathbb{R})} \leq \delta \leq \frac{\varepsilon}{4}.
\end{align*} 
Without loss of generality, we may assume that $T> T_*$. Then from condition $T- T_*< T_0- T_*$, we have $T<T_0$ and  $[0, T\vee T_*]=[0, T]$. Therefore, for $(x, t)\in\Omega\times [0, T]$, we have  
\begin{align}
    \label{0.4}
    |y_e(x, t) - y_{*e}(x, t)|  &=  \Big|y(x, t) - y_{*e}\big(x, t)\big)\Big| \nonumber \\
    &=   \Big|\zeta\big(x,  \xi^{-1}(t)\big) -y_{*e}(x,\xi(\xi^{-1}(t))\Big|   \nonumber \\
    &\le   \Big|\zeta\big(x,  \xi^{-1}(t)\big) -  y_*\big(x, \xi_* \big(\xi^{-1}(t)\big)\big)\Big|  + 
    \Big| y_*\big(x, \xi_* \big(\xi^{-1}(t)\big)\big)  -  y_{*e}\big(x, \xi \big(\xi^{-1}(t)\big)\big)  \Big|   \nonumber \\
    &=   \Big|\zeta\Big(x,  \xi^{-1}(t)\Big) - \zeta_*\Big(x,  \xi^{-1}(t) \Big)\Big|  + 
    \Big|y_{*e}\big(x, \xi_* \big(\xi^{-1}(t)\big)\big)   -  y_{*e}\big(x, \xi \big(\xi^{-1}(t)\big)\big)  \Big|.   
\end{align}
 Note that $\xi^{-1}(t) \in [0, 1]$ for all $t \in [0, T]$. By (\ref{0.2}) we deduce that 
\begin{align}
\label{0.41}
    \Big|\zeta\Big(x,  \xi^{-1}(t)\Big) -  \zeta_*\Big(x,  \xi^{-1}(t) \Big)\Big| \le  \|\zeta- \zeta_*\|_{C(\bar Q_1)} \le \delta \le \frac{\varepsilon}{4}
\end{align}
for all $(x, t) \in \bar \Omega \times [0,  T]$. Moreover, for all $t \in [0,  T]$  we have $t_1 = \xi_* \big(\xi^{-1}(t)\big) \in [0, T_*] \subset [0, T]$, $t_2 = \xi \big(\xi^{-1}(t)\big) = t \in [0,  T]$ and
\begin{align*} 
    |t_1 - t_2| \le \|\xi-\xi_*\|_{C([0,1], \mathbb{R})} \le \delta. 
\end{align*}
Therefore, we have from (\ref{0.1}) that  
\begin{align*}
    \Big|y_{*e}\Big(x, \xi_* \big(\xi^{-1}(t)\big)\Big)   -  y_{*e}\Big(x, \xi \big(\xi^{-1}(t)\big)\Big)  \Big|  \le \frac{\varepsilon}{4}\quad \forall (x, t) \in \bar \Omega \times [0, T].
\end{align*} Combining this with  (\ref{0.4}) and (\ref{0.41}), we get 
\begin{align*}    
    |y_e(x, t) -  y_{*e}(x, t)|  \le \frac{\varepsilon}{4} + \frac{\varepsilon}{4} = \frac{\varepsilon}{2}\quad \forall (x, t) \in \bar \Omega \times [0, T].
\end{align*}
 This implies that $ 
    \mathop {\rm max}\limits_{(x, t) \in \bar \Omega \times [0, T\vee T_*]} |y_e(x, t) - y_{*e}(x, t)|  \le \frac{\varepsilon}{2}$.  Hence 
\begin{align} \label{0.9}
    |T - T_*|  +  \mathop {\rm max}\limits_{(x, t) \in \bar \Omega \times [0, T\vee T_*]} |y_e(x, t) -  y_{*e}(x, t)|  \le \frac{\varepsilon}{4} +  \frac{\varepsilon}{2}  = \frac{3\varepsilon}4. 
\end{align}  On the other hand 
\begin{align*}
    {\rm esssup}_{t\in [0, T_*]}| u(\frac{Tt}{T_*})-u_*(t)|={\rm essup}_{t\in [0, T_*]}| v(x, \frac{t}{T_*})-v_*(\frac{t}{T_*})|\leq\|v-v_*\|_{L^\infty(Q_1)}\leq \delta<\frac{\epsilon}4.
\end{align*} Combining this with \eqref{0.9}, yields 
$$
|T - T_*|  +  \mathop {\rm max}\limits_{(x, t) \in \bar \Omega \times [0, T\vee T_*]} |y_e(x, t) -  y_{*e}(x, t)| +  {\rm esssup}_{t\in [0, T_*]}| u(\frac{Tt}{T_*})-u_*(t)|<\epsilon. 
$$
From this and \eqref{StrongOptimDef}, we have $ J (T_*,  y_*,  u_*) \le J (T, y, u)$. Hence
\begin{align*}
    \widehat J(\xi_*, T_*, \zeta_*, v_*)  =  J (T_*, y_*,  u_*) \le J (T, y, u) = \widehat J(\xi, T, \zeta, v).
\end{align*} The proof of the proposition is complete.
\end{proof}

\begin{proposition}\label{relationPandP1+}
 Suppose that $T_*>0$ and  $(\xi_*, T_*, \zeta_*, v_*)\in \Phi_1$ is a locally optimal solution to $(P_1)$. Let 
    \begin{align*} 
   y_*(x, t):=\zeta_* (x, \xi_*^{-1}(t)),\   u_*(x, t):= v_*(x, \xi_*^{-1}(t)). 
\end{align*} 
Then the vector $(T_*,  y_*, u_*)$  is  a locally  optimal solution to $(P)$ and  $J(T_*, y_*, u_*) = \widehat J(\xi_*, T_*, \zeta_*,  v_*)$.  
\end{proposition}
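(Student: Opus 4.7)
The plan is to reverse the argument of Proposition~\ref{relationPandP1}: given $(T, y, u)\in\Phi$ close to $(T_*, y_*, u_*)$ in the sense of Definition~\ref{Def-LocalOptim}, I will produce a corresponding feasible point of $(P_1)$ close to $(\xi_*, T_*, \zeta_*, v_*)$ in the norm $\|\cdot\|_*$, and invoke local optimality. More precisely, let $\delta>0$ be the constant from \eqref{DfLocalOptimSol} for $z_*=(\xi_*, T_*, \zeta_*, v_*)$. Fix any $T_0>T_*$; since $y_{*e}$ is uniformly continuous on $\bar\Omega\times[0,T_0]$, there exists a modulus $\omega$ with $\omega(\tau)\to 0^+$ as $\tau\to 0^+$ controlling $|y_{*e}(x,t_1)-y_{*e}(x,t_2)|$ whenever $|t_1-t_2|\le\tau$.

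Next, for any $(T, y, u)\in\Phi$ with $|T-T_*|<T_0-T_*$ and
$$
|T-T_*|+\max_{(x,t)\in\bar\Omega\times[0,T_*\vee T]}|y_e(x,t)-y_{*e}(x,t)|+\mathop{\rm esssup}_{(x,t)\in\Omega\times[0,T_*]}\big|u(x,\tfrac{Tt}{T_*})-u_*(x,t)\big|\le\varepsilon,
$$
with $\varepsilon>0$ to be fixed, I set
$$
\xi(s):=Ts,\qquad \zeta(x,s):=y(x,Ts),\qquad v(x,s):=u(x,Ts),\quad s\in[0,1].
$$
A direct change of variables in \eqref{P2}--\eqref{P5} shows $(\xi,T,\zeta,v)\in\Phi_1$ and $\widehat J(\xi,T,\zeta,v)=J(T,y,u)$; these calculations are exactly the ones implicit in Proposition~\ref{relationPandP1} and present no difficulty.

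The core of the proposal is to bound each component of $(\xi,T,\zeta,v)-(\xi_*, T_*, \zeta_*, v_*)$ by a quantity tending to $0$ with $\varepsilon$. Trivially $\|\xi-\xi_*\|_{C([0,1],\mathbb{R})}\le|T-T_*|\le\varepsilon$. For the control, the substitution $t=T_*s$ gives
$$
\|v-v_*\|_{L^\infty(Q_1)}=\mathop{\rm esssup}_{(x,t)\in\Omega\times[0,T_*]}\big|u(x,\tfrac{Tt}{T_*})-u_*(x,t)\big|\le\varepsilon.
$$
The delicate piece is the $C(\bar Q_1)$-bound on $\zeta-\zeta_*$. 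Inserting $y_{*e}(x,Ts)$ and using that $y_e$ agrees with $y$ on $\bar\Omega\times[0,T]$ and $y_{*e}$ agrees with $y_*$ on $\bar\Omega\times[0,T_*]$, the triangle inequality yields
$$
|\zeta(x,s)-\zeta_*(x,s)|\le|y_e(x,Ts)-y_{*e}(x,Ts)|+|y_{*e}(x,Ts)-y_{*e}(x,T_*s)|\le\varepsilon+\omega(\varepsilon),
$$
since $Ts,T_*s\in[0,T_0]$ and $|Ts-T_*s|\le|T-T_*|\le\varepsilon$.

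Putting the three estimates together, one can choose $\varepsilon>0$ so small that $3\varepsilon+\omega(\varepsilon)<\delta$ and $T_*+\varepsilon<T_0$; this forces $\|(\xi,T,\zeta,v)-(\xi_*,T_*,\zeta_*,v_*)\|_*\le\delta$, and the local optimality of $z_*$ gives $\widehat J(\xi_*,T_*,\zeta_*,v_*)\le\widehat J(\xi,T,\zeta,v)$, i.e.\ $J(T_*,y_*,u_*)\le J(T,y,u)$. The one subtle step is matching the two topologies: the product norm $\|\cdot\|_*$ on $Z$ is slightly different from the ``extended state plus time-rescaled control'' topology of Definition~\ref{Def-LocalOptim}, and bridging them relies exactly on the uniform continuity of $y_{*e}$ on $\bar\Omega\times[0,T_0]$. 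Everything else is routine change-of-variable bookkeeping.
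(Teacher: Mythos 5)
Your proposal is correct and follows essentially the same route as the paper's proof: change of variables $t=Ts$ to pass from $(T,y,u)\in\Phi$ to a feasible point of $(P_1)$, then the triangle inequality through $y_{*e}(x,Ts)$ combined with uniform continuity of $y_{*e}$ on $\bar\Omega\times[0,T_0]$ to bound $\|\zeta-\zeta_*\|_{C(\bar Q_1)}$, with the $\xi$ and $v$ components handled trivially. The only difference is cosmetic (a modulus of continuity $\omega$ versus the paper's explicit $\delta/3$, $\epsilon/6$ bookkeeping), so no further comment is needed.
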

\begin{proof} Let $\delta>0$ such that \eqref{DfLocalOptimSol} is valid. Fix a number  $T_0> T_*$. Since $y_{*e}$ is uniform continuous on $\bar\Omega\times [0,  T_0]$, there exists $\epsilon\in (0, \delta)$ such that 
$|y_{*e}(x, t)-y_{*e}(x, t')|< \frac{\delta}3$ for all $t, t'\in [0, T_0]$ satisfying $|t-t'|<\epsilon$. We can choose $\epsilon>0$ small enough so that $T>0$ whenever $|T-T_*|<\epsilon$.  We now take $(T, y, u)\in\Phi$  satisfying
$$
|T-T_*| + \max_{(x, t)\in\bar\Omega\times [0, T \vee T_*]}|y_e(x,t)- y_{*e}(x, t)| + {\rm esssup}_{(x, t)\in \Omega\times[0, T_*]}|u(x, \frac{Tt}{T_*})- u_*(x, t)| <  \min(\frac{\epsilon}6, T_0- T_*). 
$$
Without loss of generality, we can assume that $T>T_*$. Then we have $T<T_0$.  Let $\xi(s)=Ts$ and  $\zeta(x, s)=y(x, \xi(s)), v(x, s)=u(x, \xi(s))$. Then $(\xi, T, \zeta, v)\in\Phi_1$. Note that 
\begin{align}\label{Estim1}
  \|\xi -\xi_*\|_{C([0,1], \mathbb{R})}  &=\max_{s\in [0, 1]}| Ts-T_*s|=|T- T_*|<\frac{\epsilon}6.   
\end{align} 
From this and uniform continuity of $y_{*e}$,  we have for all $(x, s)\in \bar\Omega\times [0, 1]$ that
 \begin{align*}
|\zeta(x, s)-\zeta_*(x, s)| &=|y(x, \xi(s))-  y_*(x, \xi_*(s))|\\
&\leq |y(x, \xi(s))- y_{*e}(x, \xi(s))| +|y_{*e}(x, \xi(s))- y_{*e}(x, \xi_*(s))|\\
&\leq \frac{\epsilon}6 + \frac{\delta}3\leq \frac{\delta}2.
\end{align*} This implies that 
\begin{align}\label{Estim2}
    \|\zeta-\zeta_*\|_{C(\bar Q_1)}\leq \frac{\delta}2. 
\end{align}
Also, we have 
\begin{align*}
  {\rm esssup}_{(x, s)\in\Omega\times[0, 1]}|v(x, s)- v_*(x, s)| &={\rm esssup}_{(x, s)\in\Omega\times[0, 1]}|u(x, Ts)-u_*(x, T_*s)|\\
  &={\rm esssup}_{(x, t)\in\Omega\times[0, T_*]}|u(x, \frac{Tt}{T_*})- u_*(x,t))|\leq \frac{\epsilon}6< \frac{\delta}6. 
\end{align*} Hence 
$ \|v-v_*\|_{L^\infty(Q_1)}\leq \frac{\delta}6$.   Combining this with \eqref{Estim1} and   \eqref{Estim2}, we obtain 
$$
\|\xi -\xi_*\|_{C([0,1], \mathbb{R})} +  \|\zeta-\zeta_*\|_{C(\bar Q_1)} + \|v-v_*\|_{L^\infty(Q_1)} +|T-T_*|<\frac{\delta}6 +\frac{\delta}2 +\frac{\delta}6 +\frac{\delta}6=\delta.
$$ Since $(\xi_*, T_*, \zeta_*, v_*)$ is a locally optimal solution to $(P_1)$, we have  
$$
J(T, y, u)=\widehat J( \xi, T, \zeta, v)\geq \widehat J(\xi_*, T_*, \zeta_*, v_*)= J(T_*, y_*, u_*). 
$$ The proof of the proposition  is complete. 
\end{proof}

\medskip

To reduce  $(P_1)$ to a mathematical programming problem, we put
\begin{align*}
E = L^p(Q_1) \times \Big(H_0^1(\Omega)\cap W^{2-\frac{2}p, p}(\Omega)\Big) \times C([0, 1], \mathbb R),  \quad
W= L^\infty(Q_1).
\end{align*} 
and define mappings $F: Z \to E$, $H : Z \to \mathbb{R}^m$ and $G : Z \to W$ by setting
\begin{align*}
    & F(\xi, T, \zeta, v)= \Big(F_1(\xi, T, \zeta, v), F_2(\xi, T, \zeta, v), F_3(\xi, T, \zeta, v)\Big) \nonumber \\
    &\quad\quad\quad\quad\quad = \Big(\frac{\partial \zeta}{\partial s} + TA\zeta + T\psi(\cdot, \xi, \zeta) - Tv,\  \zeta(0)- y_0,\ \xi -\int_0^{(\cdot)}Td\tau \Big),\\
    &H(\xi, T, \zeta, v)=(H_1(\xi, T, \zeta, v),..., H_m(\xi, T, \zeta, v))=\big(\psi_1(\xi(1), \zeta(1)),..., \psi_m(\xi(1), \zeta(1))\big), \\
    &G(\xi,T,  \zeta, v)= g(\cdot, \xi, \zeta, v).
\end{align*}  
By definition of space $Y_1$, if $(\xi, T, \zeta, v) \in Z$ then $\frac{\partial \zeta}{\partial s}, A\zeta \in L^p(Q_1)$,  $\psi(\cdot, \xi, \zeta) \in L^\infty(Q_1) \subset L^p(Q_1)$ (since $\xi \in C([0, 1], \mathbb R)$,  $\zeta \in C(\bar Q_1)$) and $v \in L^\infty(Q_1) \subset L^p(Q_1)$. Hence
\begin{align*}
    \frac{\partial \zeta}{\partial s} + TA\zeta + T\psi(\cdot, \xi, \zeta) - Tv \in L^p(Q_1) 
\end{align*}
and so  $F_1$ is well defined.  $F_2$ is also well defined. Indeed,  from $(H3)$ we have  $y_0 \in H_0^1(\Omega)\cap W^{2-\frac{2}p, p}(\Omega)$. Since $\zeta \in Y_1$, $\zeta \in W^{1, 1}_2(0, 1; D, H) \cap W^{2,1}_p(Q_1)$. Since $W^{1, 1}_2(0, 1; D, H) \hookrightarrow C([0, 1], V)$, $\zeta(0) \in H_0^1(\Omega)$. Since $\zeta \in  W^{2,1}_p(Q_1)$,  \cite[Lemma 3.4, p. 82]{Ladyzhenskaya} implies that $\zeta(0) \in W^{2-\frac{2}p, p}(\Omega)$. So  $\zeta(0) \in H_0^1(\Omega)\cap W^{2-\frac{2}p, p}(\Omega)$. Consequently,  $\zeta(0)-y_0\in H_0^1(\Omega)\cap W^{2-\frac{2}p, p}(\Omega)$.  Therefore,  $F$ is well defined.  Similarly, $H$ and $G$ are well defined.  We now rewrite problem $(P_1)$ in the  form of $(MP)$:
\begin{equation}
\text{(MP1)}\quad\quad 
\begin{cases}
    &\widehat J(\xi, T, \zeta, v)    \to  \inf \\
    &{\rm s.t.}\notag\\
    &F(\xi, T, \zeta, v) = 0, \\
    &H(\xi, T, \zeta, v) \le 0, \\
    &G(\xi, T, \zeta, v) \in K_1, 
\end{cases}
\end{equation} where 
\begin{align*}
    K_1:=\{ u\in L^\infty(Q_1)| a\leq u(x, s)\leq b\ {\rm a.a.}\ (x, s)\in Q_1\}.
\end{align*}
Next, we will  apply Proposition \ref{KeyProp} for the problem (MP1) to derive necessary optimality conditions.

As before, given a vector  $z_*=(\xi_*, T_*, \zeta_*, v_*) \in \Phi_1$, the symbols  $L[x, s]$,  $L_t[x, s]$, $L_y[x, s]$, etc., stand for   $L(x, \xi_* (s), \zeta_*(x, s), v_*(x, s))$,  $L_t(x, \xi_* (s), \zeta_*(x, s), v_*(x, s))$, $L_y(x, \xi_* (s), \zeta_*(x, s), v_*(x, s))$, etc., respectively. 

\medskip

The following lemmas shows that $(P_1)$ satisfies all conditions of Proposition \ref{KeyProp}.
\begin{lemma}\label{lemma1}
Suppose that assumptions $(H2)$ and  $(H4)$ are satisfied. Then the mappings $\widehat J, F, H$ and $G$ are of class $C^2$ around $z_*$.
\end{lemma}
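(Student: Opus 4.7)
The plan is to reduce each of $\widehat{J}$, $F$, $H$, $G$ to a composition of continuous (multi)linear maps and Nemytskii (superposition) operators, and then appeal to the fact that a Nemytskii operator generated by a $C^2$ integrand with locally Lipschitz second derivatives is $C^2$ between $L^\infty$-type spaces (and into $L^p$ via the continuous embedding $L^\infty(Q_1)\hookrightarrow L^p(Q_1)$). The core fact we rely on is that on a ball in $Y_1\hookrightarrow C(\bar Q_1)$ and a ball in $U_1=L^\infty(Q_1)$, the arguments of $\psi, L, g$ stay uniformly bounded, so by $(H2)$ and $(H4)$ their first and second partial derivatives are uniformly bounded and uniformly Lipschitz; this is exactly the hypothesis that makes the associated Nemytskii operator Fréchet $C^2$ from $L^\infty$ to $L^\infty$ (verified by Taylor expansion together with a dominated-convergence argument).

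First I would handle $F$. Writing $F_2(\xi,T,\zeta,v)=\zeta(0)-y_0$ and $F_3(\xi,T,\zeta,v)=\xi-\int_0^{(\cdot)} T\,d\tau$, these are continuous affine maps once I note the embedding $W^{1,1}_2(0,1;D,H)\hookrightarrow C([0,1],V)$ combined with $\zeta\in W^{2,1}_p(Q_1)\Rightarrow\zeta(0)\in W^{2-2/p,p}(\Omega)$ (already used in the justification that $F$ is well defined). For $F_1$, I decompose as the sum
$F_1=\partial_s\zeta+T\cdot A\zeta+T\cdot\Psi(\xi,\zeta)-T\cdot v$, where $\partial_s:Y_1\to L^p(Q_1)$ and $A:Y_1\to L^p(Q_1)$ are linear continuous, $(T,\cdot)$ multiplication is bilinear continuous into $L^p(Q_1)$, and $\Psi(\xi,\zeta)(x,s):=\psi(x,\xi(s),\zeta(x,s))$ is a Nemytskii operator from $C([0,1],\mathbb R)\times C(\bar Q_1)$ into $L^\infty(Q_1)$; it is $C^2$ by $(H2)$. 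Sums, compositions, and products of continuous (multi)linear maps with $C^2$ maps are $C^2$, so $F_1$ is $C^2$.

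For $\widehat{J}$, I write $\widehat J(\xi,T,\zeta,v)=\psi_0(T,\zeta(1))+\int_{Q_1} T\,\mathcal L(\xi,\zeta,v)\,dxds$, where $\mathcal L(\xi,\zeta,v)(x,s):=L(x,\xi(s),\zeta(x,s),v(x,s))$. The evaluation $\zeta\mapsto\zeta(1)$ is continuous from $Y_1$ into $H^1_0(\Omega)\cap W^{2-2/p,p}(\Omega)\hookrightarrow C(\bar\Omega)$ (by \cite[Lemma 3.4, p.~82]{Ladyzhenskaya} plus the embedding used in Lemma~\ref{Lemma-stateEq}), so by $(H4)(ii)$ the map $(T,\zeta)\mapsto\psi_0(T,\zeta(1))$ is $C^2$. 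The Nemytskii operator $\mathcal L$ is $C^2$ from $C([0,1])\times C(\bar Q_1)\times L^\infty(Q_1)$ into $L^\infty(Q_1)$ by $(H4)(i)$; multiplication by $T$ and integration against $1\in L^1(Q_1)$ are continuous (bi)linear, yielding the required $C^2$ regularity of $\widehat J$. The treatment of $H$ is the same as the first summand of $\widehat J$, coordinate by coordinate, using $(H4)(ii)$; and $G$ is precisely the Nemytskii operator generated by $g$, $C^2$ by $(H4)(i)$.

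The main technical point to verify carefully is the $C^2$ property of the superposition operators; the rest of the argument is a bookkeeping exercise in the chain and product rules. Because all composed state-like arguments lie in spaces that embed continuously into $L^\infty$ (the arguments $\xi,\zeta,v$ stay in $L^\infty$-bounded balls around $z_*$), the standard obstruction to $C^2$ regularity of Nemytskii operators in $L^p$ does not arise: the uniform Lipschitz bounds on $\psi_{yy},L_{yy},L_{yu},L_{uu},g_{yy},\dots$ from $(H2)$ and $(H4)$ give exactly the remainder estimate needed to pass from the pointwise Taylor expansion to a norm estimate in $L^\infty(Q_1)$, and hence the Fréchet second derivative exists and is continuous on a neighborhood of $z_*$.
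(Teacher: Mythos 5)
Your proposal is correct and is essentially the argument the paper itself invokes: the paper's proof simply says "by assumptions $(H2)$, $(H4)$ and some standard arguments" and then records the explicit first- and second-derivative formulas, and your decomposition into continuous (multi)linear maps, evaluation/trace operators, and Nemytskii operators between $L^\infty$-type spaces (where the local Lipschitz bounds on the second derivatives from $(H2)$ and $(H4)$ give the needed Taylor remainder estimates) is precisely the standard argument being referenced. The resulting derivatives match the formulas \eqref{df1}--\eqref{DG2} displayed in the paper's proof.
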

\begin{proof}  Let  $B_Z(z_*, \epsilon)$ be a neighborhood of $z_*$ in $Z$. By assumptions $(H2)$,  $(H4)$ and some standard arguments, we can show that $\widehat J, F, H$ and $G$ are of class $C^2$ in  $B_Z(z_*, \epsilon)$. Namely, given a vector $\widehat z =  (\widehat\xi, \widehat T, \widehat \zeta, \widehat v)\in B_Z(z_*, \epsilon)$, we have 
    \begin{align}
         D\widehat J(\widehat z)[(\xi, T,  \zeta, v)] &= \psi_{0T}(\widehat T, \widehat\zeta(1))T+\psi_{0\zeta}(\widehat T, \widehat\zeta(1))\zeta(1)  \nonumber \\
    &+\int_{Q_1} \Big(T L(x, \hat\xi, \hat\zeta, \hat v) +  \widehat T[L_t(x, \hat\xi, \hat\zeta, \hat v)\xi  +  L_y(x, \hat\xi, \hat\zeta, \hat v)\zeta + L_u(x, \hat\xi, \hat\zeta, \hat v)v]\Big)  dxds, 
    \end{align}
    \begin{align}
      &DF(\widehat z)  =   \Big(DF_1(\widehat z),\  DF_2(\widehat z), \ DF_3(\widehat z) \Big), \label{df1} \\
      &DF_1(\widehat z)[(\xi, T,  \zeta, v)] = \frac{\partial \zeta}{\partial s} +\widehat T A\zeta +\widehat T \psi_y(\cdot, \widehat \xi, \widehat\zeta)\zeta  +\widehat T \psi_t(\cdot, \widehat \xi, \widehat\zeta)\xi -\widehat T v+ T(A\widehat\zeta +\psi(\cdot, \widehat \xi, \widehat\zeta)-\widehat v), \label{df11} \\
      &D F_2 (\widehat z)[(\xi, T,  \zeta, v)]= \zeta(0), \label{df21}\\
      &D F_3(\widehat z)[(\xi, T,  \zeta, v)]= \xi - \int_0^{(\cdot)} Td\tau, \label{df31}\\
      &DH(\widehat z)  =  \Big(DH_1(\widehat z), \ DH_2(\widehat z),...,   DH_m(\widehat z) \Big), \\
      &DH_i(\widehat z)[(\xi, T,  \zeta, v)]=\psi_{i T}(\hat T, \hat\zeta(1)) T+\psi_{i\zeta}(\hat T, \hat\zeta(1))\zeta(1),\  i = 1, 2,..., m,  \\
      &DG(\widehat z)[(\xi, T,  \zeta, v)]= g_y(\cdot, \hat\xi, \hat\zeta, \hat v)\zeta + g_u(\cdot, \hat\xi, \hat\zeta, \hat v)v+ g_t(\cdot, \hat\xi, \hat\zeta, \hat v)\xi
    \end{align} 
     and     
    \begin{align}
    \label{DJ2}
       & D^2\widehat J(\widehat z)(\xi, T,  \zeta, v)^2 =\psi_{0TT}(\widehat T, \widehat\zeta(1))T^2+\psi_{0\zeta\zeta}(\widehat T, \widehat\zeta(1))\zeta^2(1)+ 2\psi_{0T\zeta}(\widehat T, \widehat\zeta(1))T\zeta(1) \notag \\
        &+\int_{Q_1} \widehat T \Big[L_{tt}(x, \hat\xi, \hat\zeta, \hat v(T,  \zeta, v, \xi))\xi^2  +  L_{yy}(x, \hat\xi, \hat\zeta, \hat v)\zeta^2 + L_{uu}(x, \hat\xi, \hat\zeta, \hat v)v^2 \Big]  dxds \nonumber \\
        &+ \int_{Q_1} 2 \widehat T \Big[L_{ty}(x, \hat\xi, \hat\zeta, \hat v)\xi \zeta + L_{tu}(x, \hat\xi, \hat\zeta, \hat v)\xi v  +  L_{yu}(x, \hat\xi, \hat\zeta, \hat v)\zeta v\Big]  dxds,   \nonumber \\
        &+ \int_{Q_1} 2 T \big[L_t(x, \hat\xi, \hat\zeta, \hat v)\xi  + L_y(x, \hat\xi, \hat\zeta, \hat v)\zeta  +  L_u(x, \hat\xi, \hat\zeta, \hat v)v\big]  dxds,\\ 
        &D^2F(\widehat z)(\xi, T,  \zeta, v)^2  \nonumber \\ 
        &=   \Big(\widehat T \psi_{tt}(\cdot,\widehat \xi, \widehat \zeta)\xi^2  +  \widehat T \psi_{yy}(\cdot,\widehat \xi, \widehat \zeta)\zeta^2  
        +  2 \widehat T \psi_{ty}(\cdot,\widehat \xi, \widehat \zeta)\xi \zeta
        + 2 T \big(A \zeta   + \psi_t(\cdot,\widehat \xi,\widehat \zeta)\xi  +  \psi_y(\cdot,\widehat \xi,\widehat \zeta)\zeta - v\big), \  0, \  0\Big),\\
        &D^2H(\widehat z)(\xi, T,  \zeta, v)^2  =  \big(D^2H_1(\widehat z), \ D^2H_2(\widehat z), ...,  D^2H_m(\widehat z) \big)(\xi, T,  \zeta, v)^2   \\
    &D^2H_i(\widehat z)(\xi, T,  \zeta, v)^2=\psi_{iTT}(\hat T, \hat\zeta(1))T^2 + 2\psi_{i T \zeta}(\hat T, \hat\zeta(1))T\zeta(1)  +\psi_{i\zeta \zeta}(\hat T, \hat\zeta(1))\zeta^2(1),\\  
    & D^2G(\widehat z)(\xi, T,  \zeta, v)^2= g_{tt}(\cdot, \hat\xi, \hat\zeta, \hat v)\xi^2  +  g_{yy}(\cdot, \hat\xi, \hat\zeta, \hat v)\zeta^2 + g_{uu}(\cdot, \hat\xi, \hat\zeta, \hat v)v^2\notag \\     &\quad\quad\quad\quad\quad\quad\quad\quad\quad+2g_{ty}(\cdot, \hat\xi, \hat\zeta, \hat v)\xi \zeta + 2g_{tu}(\cdot, \hat\xi, \hat\zeta, \hat v)\xi v  +  2g_{yu}(\cdot, \hat\xi, \hat\zeta, \hat v)\zeta v. \label{DG2}
    \end{align} for all $(\xi, T,  \zeta, v) \in Z$. The lemma is proved. 
\end{proof}

\begin{lemma} \label{Lemma-ClosedRange}
    The operator $DF(\xi_*, T_*, \zeta_*, v_*): Z \to E$ is onto. In particular,  $DF(\xi_*, T_* \zeta_*, v_*)Z$ is closed.       
\end{lemma}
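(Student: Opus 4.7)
The plan is to prove surjectivity constructively by exploiting the freedom we have in choosing the four-tuple $(\xi, T, \zeta, v)$ to hit the three-tuple $(e_1, e_2, e_3) \in E = L^p(Q_1) \times (H^1_0(\Omega)\cap W^{2-2/p,p}(\Omega)) \times C([0,1],\mathbb{R})$. Since we have one extra degree of freedom, we can kill off convenient terms. First I would set $T = 0$ and $v = 0$; this immediately forces the third component $DF_3(z_*)(\xi, 0, \zeta, 0) = \xi$, so I pick $\xi = e_3$. The second component then demands $\zeta(0) = e_2$, and with these choices the first component reduces to the linear parabolic equation
\begin{equation*}
\frac{\partial \zeta}{\partial s} + T_* A\zeta + T_*\psi_y[\cdot,\cdot]\zeta = e_1 - T_*\psi_t[\cdot,\cdot]\,e_3, \qquad \zeta(0) = e_2.
\end{equation*}

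Next, I would verify that this equation admits a solution $\zeta \in Y_1$. The coefficient $T_*\psi_y[\cdot,\cdot]$ lies in $L^\infty(Q_1)$ by $(H2)$ together with $\zeta_* \in C(\bar Q_1)$, the initial datum $e_2$ lies in $H^1_0(\Omega)\cap W^{2-2/p,p}(\Omega)$, and the right-hand side lies in $L^p(Q_1)$ because $e_1 \in L^p(Q_1)$ and $\psi_t[\cdot,\cdot] \in L^\infty(Q_1)$ (hence the product with $e_3 \in C([0,1],\mathbb{R})$ stays in $L^p$). To put the equation into the exact form of Lemma \ref{Lemma-LinearizedEq}, perform the time rescaling $\tau = T_* s$ which transforms it to
\begin{equation*}
\frac{\partial \widetilde\zeta}{\partial \tau} + A\widetilde\zeta + \psi_y[\cdot,\cdot]\widetilde\zeta = \frac{1}{T_*}\bigl(e_1 - T_*\psi_t[\cdot,\cdot]\,e_3\bigr), \qquad \widetilde\zeta(0) = e_2,
\end{equation*}
on $Q_{T_*}$. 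Lemma \ref{Lemma-LinearizedEq} then provides a unique solution $\widetilde\zeta \in Y_{T_*}$, and pulling back gives $\zeta \in Y_1$.

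Finally, I would substitute the quadruple $(\xi, T, \zeta, v) = (e_3, 0, \zeta, 0)$ into the formulas \eqref{df11}, \eqref{df21}, \eqref{df31} for $DF(z_*)$ and check component-by-component that it equals $(e_1, e_2, e_3)$. This establishes surjectivity, whence $DF(z_*)Z = E$ is trivially closed in $E$.

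I do not anticipate a real obstacle here; the only subtlety is the rescaling to reconcile the elliptic coefficient $T_* A$ appearing on $Q_1$ with the unrescaled operator $A$ in Lemma \ref{Lemma-LinearizedEq}, but this is a purely cosmetic change of time variable. The proof is essentially a matter of selecting the free parameters judiciously and quoting the linear parabolic regularity already established.
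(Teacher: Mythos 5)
Your proof is correct and follows essentially the same route as the paper: fix the scalar and control components to reduce surjectivity to solvability of the linearized parabolic equation via Lemma \ref{Lemma-LinearizedEq}. The only (immaterial) difference is that you choose $T=0$, which removes the term $T(A\zeta_*+\psi[\cdot,\cdot]-v_*)$ from the right-hand side, whereas the paper chooses $T=T_*$ and absorbs that term (equal to $-\partial_s\zeta_*\in L^p(Q_1)$) into the source; your explicit time rescaling to match the normalization of Lemma \ref{Lemma-LinearizedEq} is a sound, if slightly more careful, bookkeeping step.
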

\begin{proof}  
    Taking any $(\phi_0, \zeta_0, \xi_0)\in E$, we consider the following equation on $Z$
    \begin{align*}
    DF(\xi_*, T_*, \zeta_*, v_*)[(\xi, T, \zeta, v)]=(\phi_0, \zeta_0, \xi_0).
    \end{align*} 
    By (\ref{df1})-(\ref{df31}), it is equivalent to the system of equations:
\begin{align*}
 &\frac{\partial \zeta}{\partial s} +T_* A\zeta +T_* \psi_y(\cdot, \xi_*, \zeta_*)\zeta  +T_* \psi_t(\cdot, \xi_*, \zeta_*)\xi -T_* v+ T(A\zeta_* +\psi(\cdot, \xi_*, \zeta_*)-v_*)=\phi_0,\\
& \zeta(0)=\zeta_0,\\
&\xi(s) -Ts=\xi_0(s).
\end{align*} Taking $T=T_*$ and $v=0$, we get $\xi=\xi_0+T_*s$. Consider parabolic equation
\begin{align}
 &\frac{\partial \zeta}{\partial s} +T_* A\zeta +  T_* \psi_y(\cdot, \xi_*, \zeta_*)\zeta   =\phi_0  -  T_*\psi_t(\cdot, \xi_*, \zeta_*)\xi  - T_*(A\zeta_* +\psi(\cdot, \xi_*, \zeta_*)   -v_*), \label{onto1}\\
& \zeta(0)=\zeta_0.  \label{onto2}
\end{align} 
By $(H2)$, $T_*\psi_t(\cdot, \xi_*, \zeta_*)\xi \in L^\infty(Q_1)$. Also we have   $T_*(A\zeta_* +\psi(\cdot, \xi_*, \zeta_*)-v_*)=-\frac{d}{ds}\zeta_*\in L^p(Q_1)$. Hence $\phi_0  -  T_*\psi_t(\cdot, \xi_*, \zeta_*)\xi  - T_*(A\zeta_* +\psi(\cdot, \xi_*, \zeta_*)   -v_*)\in L^p(Q_1)$. By Lemma \ref{Lemma-LinearizedEq},  equations \eqref{onto1}-\eqref{onto2} has a solution $\zeta\in Y_1$. Therefore, $DF(\xi_*, T_*, \zeta_*, v_*)$ is onto.  The lemma is proved.  
\end{proof}

The following lemma shows that the  Mangasarian-Fromowitz condition is valid. 

\begin{lemma}\label{lemma-Robinson}
  If  $(H6)$ is satisfied, then $(A4)$  is also fulfilled at $z_*  = (\xi_*, T_*, \zeta_*, v_*)$. 
\end{lemma}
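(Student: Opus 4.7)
The plan is to exploit the natural correspondence between $(P)$-type data on $Q_{T_*}$ and $(P_1)$-type data on $Q_1$ induced by the change of variables $t = T_* s$. The surjectivity half of $(A4)$ is already supplied by Lemma \ref{Lemma-ClosedRange}, so only the existence of a $\tilde z \in Z$ satisfying the three displayed conditions in \eqref{Man-From} remains. I would construct $\tilde z$ directly from the triple $(\hat T, \hat y, \hat u)$ provided by $(H6)$ by setting
\begin{equation*}
\tilde T := \hat T, \qquad \tilde \xi(s) := \hat T s, \qquad \tilde\zeta(x, s) := \hat y(x, T_* s), \qquad \tilde v(x, s) := \hat u(x, T_* s).
\end{equation*}

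First I would verify that $\tilde z \in Z$. The components $\tilde T \in \mathbb{R}$ and $\tilde\xi \in C([0,1], \mathbb{R})$ are immediate. Because the rescaling $w(\cdot, \cdot) \mapsto w(\cdot, T_*\, \cdot)$ is a linear homeomorphism between the relevant function spaces on $Q_{T_*}$ and $Q_1$ (the $W^{1,1}_2(\,\cdot\,;D,H)$, $W^{2,1}_p$ and $L^\infty$ norms scale by fixed powers of $T_*$, and $A$ commutes with the spatial action), the memberships $\hat y \in Y_{T_*}$ and $\hat u \in U_{T_*}$ yield $\tilde\zeta \in Y_1$ and $\tilde v \in U_1$.

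Next I would verify the three conditions of $(A4)$ one by one. The third coordinate of $DF(z_*)\tilde z = 0$ amounts to $\tilde\xi(s) - \int_0^s \tilde T\, d\tau = 0$, which holds by construction; the second coordinate gives $\tilde\zeta(0) = \hat y(0) = 0$; and substituting the ansatz into \eqref{df11} and dividing by $T_*$ with $t = T_* s$ reduces the first coordinate to exactly condition $(i)$ of $(H6)$. Because $\xi_*(1) = T_*$ and $\tilde\zeta(1)(x) = \hat y(x, T_*)$, the inequality $H_i(z_*) + DH_i(z_*)\tilde z < 0$ becomes literally condition $(ii)$ of $(H6)$. Finally, substitution into the derivative of $G$ yields
\begin{equation*}
G(z_*)(x,s) + DG(z_*)[\tilde z](x,s) = g[x,t] + g_t[x,t]\tfrac{\hat T t}{T_*} + g_y[x,t]\hat y + g_u[x,t]\hat u, \qquad t = T_* s,
\end{equation*}
and since the rescaling is an isometric isomorphism $L^\infty(Q_{T_*}) \to L^\infty(Q_1)$ carrying $K_{T_*}$ bijectively onto $K_1$, it carries $\mathrm{int}(K_{T_*})$ onto $\mathrm{int}(K_1)$. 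Thus $(iii)$ of $(H6)$ delivers $G(z_*) + DG(z_*)\tilde z \in \mathrm{int}(K_1)$.

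The main concern is careful bookkeeping across the change of variables, particularly checking that the derivative formulas recorded in Lemma \ref{lemma1} match the expressions in $(H6)$ after substitution; I expect no deeper obstacle, as every ingredient has already been prepared.
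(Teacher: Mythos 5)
Your proposal is correct and follows essentially the same route as the paper: surjectivity of $DF(z_*)$ is taken from Lemma \ref{Lemma-ClosedRange}, and the Mangasarian--Fromowitz direction is obtained by rescaling the triple from $(H6)$ via $t = T_* s$ and checking that the three conditions of $(H6)$ translate verbatim into the three conditions of \eqref{Man-From}. The extra bookkeeping you include (membership of $\tilde z$ in $Z$ and the identification of ${\rm int}(K_{T_*})$ with ${\rm int}(K_1)$ under the rescaling) is sound and only makes explicit what the paper leaves implicit.
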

\begin{proof} Recall that 
 $ \xi_* (s)= T_*s,\  \zeta_* (x,s)= y_*(x, T_*s),\   v_*(x,s)= u_*(x, T_*s).$
   By Lemma \ref{Lemma-ClosedRange}, condition $(i)$ of $(A4)$ is satisfied. Let $(\widetilde T, \widetilde y, \widetilde u)$ satisfy $(H6)$.  By defining 
    $$
    \widetilde\xi (s)=\widetilde{T}s,\  \widetilde\zeta (x,s)=\widetilde y(x, T_*s),\  \widetilde v(x,s)=\widetilde u(x, T_*s),
    $$ we see that $(H6)$ is equivalent to:
\begin{align*}
(H6)' 
    \begin{cases}
       & \frac{d\widetilde\zeta}{dt} +T_*(A\widetilde\zeta +\psi_t(\cdot, \xi_*, \zeta_*)\widetilde \xi  +\psi_y(\cdot, \xi_*, \zeta_*)\widetilde \zeta - \widetilde u) +\widetilde T( A\zeta_* + \psi(\cdot, \xi_*, \zeta_*) - v_*) = 0, \ \    \widetilde \zeta (0) = 0, \\
       &\psi_i(\xi_*(1), \zeta_*(1)) +\psi_{it}(\xi_*(1), \zeta_*(1))\widetilde \xi(1) + \psi_{iy}(\xi_*(1), \zeta_*(1))\widetilde\zeta(1)<0, \\
       & g[x, s] +g_t[x,s]\widetilde \xi+ g_{y}[x,s]\widetilde\zeta + g_{u}[x,s]\widetilde v \in {\rm int}K_1.
    \end{cases}
\end{align*} This means that   
    \begin{align*}
    \begin{cases}
        &DF(z_*)(\widetilde\xi, \widetilde T, \widetilde \zeta, \widetilde v)=0,\\
        &H_i(z_*) + DH_i(z_*)(\widetilde\xi, \widetilde T, \widetilde \zeta, \widetilde v) < 0, \quad i =  1, 2, ..., m, \\
        &G(z_*) + DG(z_*)(\widetilde\xi, \widetilde T, \widetilde \zeta, \widetilde v, ) \in {\rm int}K_1. 
    \end{cases}
    \end{align*} Hence $(A4)$ is satisfied.  The lemma is proved. 
\end{proof}

Let us  denote by $\mathcal{C}_{1, 0}[z_*]$ the critical cone of vectors  $(\xi, T, \zeta, v)\in Z$ satisfying conditions:

\begin{itemize}
    \item [$(b_1)$] $\psi_{0T}[T_*,1]T+ \psi_{0\zeta}[T_*, 1]\zeta(1)+ \displaystyle \int_{Q_1}\Big(T L[x, s] +  T_*\big(L_t[x, s]\xi  +  L_y[x, s]\zeta + L_u[x, s]v\big)\Big) dxds \leq 0$;

    \item [$(b_2)$] $\dfrac{\partial \zeta}{\partial s} +T_*\big( A\zeta + \psi_t[\cdot, \cdot]\xi  +  \psi_y[\cdot, \cdot]\zeta -v\big)+ T(A\zeta_* +\psi[\cdot, \cdot]-v_*)=0, \quad    \zeta(0)=0$;

    \item [$(b_3)$] $\xi(s)=Ts$ for all $s\in [0,1]$;

    \item [$(b_4)$] $\psi_{iT}[T_*, 1])\xi(1)+\psi_{i\zeta}[T_*, 1]\zeta(1)\leq 0$  \ \  for $i\in \{1,2,.., m| \psi_i[T_*, 1]=0\}$;

    \item [$(b_5)$] $ g_t[\cdot, \cdot]\xi  +  g_y[\cdot, \cdot]\zeta + g_u[\cdot, \cdot]v \in{\rm cone}\big(K_1-g[\cdot, \cdot]\big)$.   
\end{itemize}
 The closure of $\mathcal{C}_{1, 0}[z_*]$ in $Z$  is denoted by $\mathcal C_1[z_*]$. It is also called a critical cone to $(P_1)$ at $z_*$. 
 
Let us define a Lagrange function associated to $(P_1)$:
     \begin{align*}
         \mathcal{L} : Z \times \mathbb{R}_+ \times L^q(Q_1) \times (H_0^1(\Omega)\cap W^{2-\frac{2}p, p}(\Omega))^* \times C([0, 1], \mathbb R)^* \times \mathbb{R}^m \times L^\infty(Q_1)^*     \to \mathbb{R} 
     \end{align*} by setting 
    \begin{align*}
&\mathcal{L}(z, \lambda, \phi_1, \phi_2^*, \nu_1^*, \mu, w^*)= \notag\\
&\lambda \psi_0(T, \zeta(1))+\lambda\int_{Q_1}TL(x, \xi, \zeta, v)dxds  +   \int_{Q_1}\phi_1 \Big(\dfrac{\partial \zeta}{\partial s} + T A\zeta + T\psi(x, \xi, \zeta) - Tv\Big) dxds  \nonumber \\   
&+\langle \phi_2^*, \zeta (0)-y_0\rangle   +\langle \nu_1^*, \xi-\int_0^{(\cdot)} T d\tau\rangle +   \sum_{i=1}^m \mu_i \psi_i(T, \zeta(1)) + \langle w^*, g(\cdot, \xi, \zeta, v)\rangle,\ z=(\xi, T,  \zeta, v)\in Z.   
\end{align*} 

The following result gives necessary optimality conditions for $(P_1)$.

\begin{proposition}\label{Pro-KKT-P1}
Suppose assumptions $(H1)-(H5)$, $T_*>0$ and $z_*=(\xi_*, T_*, \zeta_*, v_*)\in\Phi_1$ is a locally optimal solution to $(P_1)$. Then for each $z=(\xi, T, \zeta, v)\in\mathcal{C}_{1,0}[z_*]$, there exist Lagrange multipliers $\lambda \in \mathbb R_+$, $\mu = (\mu_1, \mu_2, ..., \mu_m) \in \mathbb R^m$, $\varphi \in L^q(Q_1)  \cap L^1(0, 1; W^{1,1}_0(\Omega))$, $e \in L^1(Q_1)$ and an absolutely continuous function $\phi : [0, 1] \to \mathbb{R}$ not all are zero and  satisfy the following conditions:

\noindent $(i)$  (the adjoint equations)  
\begin{align}\label{cm0.1} 
    \begin{cases}
      - \dfrac{\partial \varphi}{\partial s} + T_* A^*\varphi + T_*\psi_y[\cdot, \cdot]\varphi = -  \lambda T_* L_y[\cdot, \cdot] -  e g_y[\cdot, \cdot]  \quad {\rm in} \ Q_1, \\
      \varphi = 0 \quad {\rm on} \ \Sigma_1, \\
      \varphi (1) = - \lambda\psi_{0\zeta}[T_*, 1]- \sum_{i=1}^m \mu_i  \psi_{i\zeta}[T_*,1]
    \end{cases}
\end{align}
and
\begin{align}\label{cm0.2}
    \begin{cases}
        \phi'(s) &=  - \displaystyle \int_\Omega\big(\lambda T_*L_t[x, s] +    \varphi T_*\psi_t[x, s] + e g_t[x, s]\big) dx \quad {\rm in} \ (0, 1), \\
        \phi(1) &= 0
    \end{cases}
\end{align}
where $A^*$ is the adjoint operator of $A$, which is defined by $A^* \varphi = - \sum_{i, j = 1}^N D_i (a_{ij}(x)D_j\varphi)$; 

\noindent $(ii)$   ( optimality condition for $v_*$) 
\begin{align}\label{cm0.3}
    \lambda T_*  L_u[x, s]  -  T_*\varphi(x, s) +  e(x, s) g_u[x, s]  =  0 \quad {\rm a.a.} \ (x, s) \in Q_1; 
\end{align}

\noindent $(iii)$   (optimality condition for $T_*$) 
\begin{align}\label{cm0.4}  
    \int_0^1 \phi(s)ds+ \int_{Q_1}\Big(\lambda L[x, s]+  \varphi \big(A\zeta_* +\psi[x, s] -v_*\big)\Big) dxds + \lambda \psi_{0\xi}[T_*,1]+\sum_{i=1}^m \mu_i \psi_{i\xi}[T_*,1]= 0;
\end{align}

\noindent $(iv)$ (the complementary conditions)
\begin{align} \label{cm0.5}
\begin{cases}
    &\ \mu_i \ge 0, \ \ {\rm and} \ \  \mu_i \psi_i (\xi_*(1), \zeta_*(1))= 0,\  i = 1, 2, ..., m,   \\
    &e(x, s)\in T([a, b],  g[x, s])\ \text{a.a.}\ (x, s)\in Q_1; 
\end{cases}
\end{align}

\noindent $(v)$ (the nonnegative second-order condition)  
\begin{align} \label{cm0.6} 
    & D^2_{(\xi, T, \zeta, v)}\mathcal{L}\big(\xi_*, T_*, \zeta_*, v_*, \lambda, \varphi, \mu, \phi, e\big)[z, z]=\lambda \Big(\psi_{0TT}[T_*, 1]T^2 + 2\psi_{0 T \zeta}[T_*,1]T\zeta(1)  +\psi_{0\zeta \zeta}[T_*,1]\zeta^2(1)\Big)\notag\\
    &+\lambda \int_{Q_1}T_*\Big(L_{tt}[x, s]\xi^2   + 2L_{ty}[x, s]\xi \zeta+  2L_{tu}[x, s]\xi v  +  L_{yy}[x, s]\zeta^2 + L_{uu}[x, s]v^2  +  2L_{yu}[x, s]\zeta v\Big) dxds  \nonumber \\
     &+   \lambda  \int_{Q_1} 2 T \Big(L_t[x, s]\xi  + L_y[x, s]\zeta  +  L_u[x, s]v\Big)  dxds   \nonumber \\  
     &+  \int_{Q_1}\varphi \Big[T_*\Big(\psi_{tt}[x, s]\xi^2  +  \psi_{yy}[x, s]\zeta^2  + 2\psi_{ty}[x, s]\xi \zeta\Big)    + 2 T \Big(A \zeta +  \psi_t[x, s]\xi  + \psi_y[x, s]\zeta - v\Big)\Big]dxds \nonumber \\
     &+  \int_{Q_1} e(x, s) \Big(g_{tt}[x, s]\xi^2  +  g_{yy}[x, s]\zeta^2 + g_{uu}[x, s]v^2  + 2g_{ty}[x, s]\xi \zeta + 2g_{tu}[x, s]\xi v  +  2g_{yu}[x, s]\zeta v\Big)  dxds  \nonumber \\
     &+  \sum_{i = 1}^m \mu_i \Big(\psi_{iTT}[T_*, 1]T^2 + 2\psi_{i T \zeta}[T_*,1]T\zeta(1)  +\psi_{i\zeta \zeta}[T_*,1]\zeta^2(1)\Big)   \geq 0. 
\end{align} In addition, if the assumption $(H6)'$ is satisfied then  $\lambda=1$ and 
\begin{align}
   \max_{(1, \mu, \varphi, \phi, e)\in\Lambda[z_*]} D^2_{(\xi, T, \zeta, v)}\mathcal{L}\big(\xi_*, T_*, \zeta_*, v_*, \lambda, \varphi, \mu, \phi, e\big)[z, z]\geq 0\quad \forall z\in \mathcal{C}_1[z_*].
\end{align}
\end{proposition}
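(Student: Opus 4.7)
The plan is to apply Proposition \ref{KeyProp} to the reformulated mathematical programming problem $(MP_1)$ and then decode the abstract Lagrange condition $D_z \mathcal{L} = 0$ into the PDE-style conditions $(i)$--$(v)$. The hypotheses of Proposition \ref{KeyProp}(a) are already in hand: $(A1)$ holds because $K_1$ contains every $L^\infty$ function with $a < u < b$, whose interior is nonempty; $(A2)$ is Lemma \ref{lemma1}; and surjectivity of $DF(z_*)$ (which implies $(A3)$) is Lemma \ref{Lemma-ClosedRange}. Thus, for each critical direction $d = (\xi,T,\zeta,v) \in \mathcal{C}_{1,0}[z_*]$, we obtain Lagrange multipliers $(\lambda, w^*, l, e^*) \in \mathbb{R}_+ \times W^* \times \mathbb{R}^m \times E^*$ with $(\lambda, l) \ge 0$, $l_i \psi_i[T_*,1] = 0$, $w^* \in N(K_1, G(z_*))$, $D_z \mathcal{L}(z_*, \cdot)d = 0$ for every $d \in Z$, and the second-order inequality at $d$.

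The next task is to identify the multipliers concretely. The dual $E^*$ splits as three pieces corresponding to $F_1, F_2, F_3$; I would write them as $(\phi_1, \phi_2^*, \nu_1^*)$. Testing $D_z\mathcal{L}(z_*) = 0$ against variations that vary only in one coordinate gives four scalar identities. Taking $(\xi,T,\zeta,v) = (0,0,0,v)$ with arbitrary $v \in L^\infty(Q_1)$ yields
\begin{equation*}
\lambda T_* L_u[\cdot,\cdot] - T_* \phi_1 + w^* g_u[\cdot,\cdot] = 0
\end{equation*}
as an equation in $L^\infty(Q_1)^*$. Since $\phi_1 \in L^q(Q_1)$ and, by $(H5)$, $1/g_u[\cdot,\cdot] \in L^p(Q_1)$, this relation forces $w^*$ to be represented by an $L^1$ function $e(x,s) := (T_*\phi_1 - \lambda T_* L_u)/g_u[\cdot,\cdot]$, which also shows $e \in N([a,b], g[\cdot,\cdot])$ pointwise a.e.\ (the desired part of $(iv)$) and gives the pointwise optimality condition \eqref{cm0.3} after setting $\varphi := \phi_1$.

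Testing against variations $(\xi,T,\zeta,v) = (0,0,\zeta,0)$ with arbitrary $\zeta \in Y_1$ satisfying $\zeta(0) = 0$, integrating $\int \varphi \, \zeta_s$ by parts in time and using the formal adjoint $A^*$ of $A$ in space, I would read off the backward parabolic equation \eqref{cm0.1}; the boundary terms at $s=1$ furnish the terminal condition $\varphi(1) = -\lambda \psi_{0\zeta}[T_*,1] - \sum \mu_i \psi_{i\zeta}[T_*,1]$ after setting $\mu_i := l_i$, and the homogeneous Dirichlet trace of $\varphi$ comes from the choice $\zeta \in W^{1,1}_2(0,1;D,H)$. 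The regularity $\varphi \in L^q(Q_1) \cap L^1(0,1; W^{1,1}_0(\Omega))$ is then obtained from the adjoint linear parabolic equation with data in $L^q$ and the $L^q$-parabolic regularity theory (dual to Lemma \ref{Lemma-LinearizedEq}). Testing against $(\xi,0,0,0)$ with $\xi \in C([0,1],\mathbb{R})$ identifies $\nu_1^*$ as an element of $\mathrm{BV}[0,1]$, whose distribution function (normalized by $\phi(1)=0$) is the absolutely continuous $\phi$ of \eqref{cm0.2}; writing $\langle \nu_1^*, \xi\rangle = -\int_0^1 \phi'(s)\xi(s)\,ds$ and matching coefficients of $\xi$ in $D_z\mathcal{L}d=0$ produces the ODE for $\phi'$. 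Testing against $(0,T,0,0)$ and using $\int_0^\cdot T\,d\tau = Ts$ gives the scalar identity \eqref{cm0.4} for the optimal time.

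The nonnegative second-order condition \eqref{cm0.6} is then just a transcription of the abstract inequality $D^2_z \mathcal{L}(z_*,\lambda,\ldots)[d,d] \ge 0$ from Proposition \ref{KeyProp}(a) using formulas \eqref{DJ2}--\eqref{DG2} for the Hessians. Finally, under $(H6)'$, Lemma \ref{lemma-Robinson} delivers $(A4)$, so Proposition \ref{KeyProp}(b) applies: the normal multiplier set $\Lambda_*[z_*]$ is nonempty, weak$^*$ compact, $\lambda = 1$ may be chosen, and the $\max$-inequality extends by continuity from $\mathcal{C}_{1,0}[z_*]$ to its closure $\mathcal{C}_1[z_*]$. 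The main obstacle I anticipate is the identification step for $w^*$: \emph{a priori} only a finitely additive set function on $L^\infty(Q_1)$, it needs the integrability supplied by $(H5)$ (together with the already established $L^q$-regularity of $\varphi$) to be pinned down as the honest $L^1$ density $e$; once this is done, all subsequent identifications reduce to standard integration by parts and density arguments.
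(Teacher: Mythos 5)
Your proposal follows essentially the same route as the paper: verify $(A1)$--$(A3)$ via Lemmas \ref{lemma1} and \ref{Lemma-ClosedRange}, apply Proposition \ref{KeyProp}, split the multiplier in $E^*$ into $(\phi_1,\phi_2^*,\nu_1^*)$, test $D_z\mathcal{L}=0$ against one coordinate at a time, identify $w^*$ as an $L^1$ density via $(H5)$, represent $\nu_1^*$ by a BV distribution function $\phi$, transcribe the Hessian, and invoke Lemma \ref{lemma-Robinson} with Proposition \ref{KeyProp}(b) for normality. The only inaccuracy is in the regularity step for $\varphi$: the adjoint equation has right-hand side only in $L^1(Q_1)$ (since $e\in L^1$) and a terminal datum that is a Radon measure on $\bar\Omega$, so the paper uses the measure-data parabolic theory of Arada--Raymond together with its Green formula to identify $\phi_1=\varphi$ (rather than an $L^q$-duality argument or a direct integration by parts against the merely-$L^q$ function $\phi_1$), but this does not change the structure of the argument.
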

\begin{proof}  It is easy to see that 
\begin{align*}
    {\rm int}K_1 = \{v \in L^\infty(Q_1): a < {\rm ess inf}v \le  {\rm ess sup}v < b\}.
    \end{align*}
Hence ${\rm int}K_1$ is nonempty and so $(A1)$ is valid. By Lemma \ref{lemma1} and Lemma \ref{Lemma-ClosedRange}, $(A2)$ and $(A3)$ are valid. Thus all conditions of Proposition \ref{KeyProp} are fulfilled.  Accordingly, for each $z=(\xi, T, \zeta, v)\in\mathcal{C}_{1,0}[z_*]$,   there exist Lagrange multipliers  $\lambda \in \mathbb R_+$, $\phi_1 \in L^q(Q_1)$, $\phi_2^* \in (H_0^1(\Omega)\cap W^{2-\frac{2}p, p}(\Omega))^*$, $\mu = (\mu_1, \mu_2, ..., \mu_m) \in \mathbb R^m$, $\nu_1^* \in C([0, 1], \mathbb R)^*$  and  $w^* \in L^\infty(Q_1)^*$ such that the following conditions hold:
\begin{align}
    &D_z\mathcal{L}(z_*, \lambda, \phi_1, \phi_2^*, \nu_1^*, \mu, w^*)=\notag\\
    &\lambda D \widehat J(z_*) +  \big \langle (\phi_1, \phi_2^*, \nu_1^*), D F(z_*) \big \rangle + \big \langle \mu, DH(z_*) \big\rangle + \Big\langle w^*,  DG(z_*) \Big\rangle  = 0, \label{OptimRedCond1}\\
    &\lambda \geq 0,\  \mu_i\geq 0,\ \mu_i H_i(z_*)=0,\ i=1,2,..., m,\label{OptimRedCond3}\\
    &w^* \in N(K_1, g[\cdot, \cdot]), \label{OptimRedCond4}\\
    & D^2_z\mathcal{L}(z_*, \lambda, \phi_1, \phi_2^*, \nu_1^*, \mu, w^*)[z, z]\geq 0. \label{OptimRedCond5}
\end{align} 
 Note that $\nu_1^*$ is a signed Radon measure which is absolutely continuous w.r.t the Lebesgue measure $|\cdot|$  on $[0, 1]$. By Riesz’s Representation (see \cite[Chapter 01, p. 19]{Ioffe-1979}  and  \cite[Theorem 3.8, p. 73]{Hirsch-1999}), there exists a  function of bounded variation $\nu_1$, which is continuous from the right and vanishes at zero such that 
\begin{align*}  
\langle \nu_1^*, \vartheta \rangle  =  \int_0^1 \vartheta(s) d\nu_1(s) \quad \quad  \forall  \vartheta \in  C([0,1], \mathbb{R}), 
\end{align*}
where the integral stands for the  Riemann-Stieltjes integral. We define function $\phi : [0, 1] \to \mathbb{R}$ by setting
\begin{align}
\label{defofp}
    \phi (s) :=- \nu_1 ((s, 1]) = \nu_1(s) - \nu_1(1). 
\end{align}
Then the function $\phi$ is of bounded variation and $\phi(1)=0$. Then for all $(\xi, T) \in C([0,1], \mathbb{R})\times\mathbb{R}$,  we have
\begin{align*}
    \langle \nu_1^*, \xi -\int_0^{(\cdot)} T d\tau \rangle  = \int_0^1 (\xi(s)-Ts) d\nu_1(s). 
\end{align*}
Hence  \eqref{OptimRedCond1} becomes
\begin{align}\label{OptimRedCond7}
&\lambda \psi_{0T}[T_*,1]T +\lambda\psi_{0\zeta}[T_*, 1]\zeta(1) \notag\\
&+\lambda\int_{Q_1} \Big(T L(x, \xi_*, \zeta_*, v_*) +  T_*[L_t(x, \xi_*, \zeta_*, v_*)\xi +  L_y(x, \xi_*, \zeta_*, v_*)\zeta + L_u(x, \xi_*, \zeta_*, v_*)v]\Big)  dxds   \notag\\
&+ \int_{Q_1}\phi_1 \Big(\dfrac{\partial \zeta}{\partial s} + T_* A\zeta + T_*\psi_t(x, \xi_*, \zeta_*)\xi  + T_*\psi_y(x, \xi_*, \zeta_*)\zeta - T_* v\Big) dxds        \nonumber \\   
&+ \int_{Q_1}\phi_1\Big(A\zeta_* +\psi(x, \xi_*, \zeta_*)-v_*\Big)T dxds    \notag\\
&+\langle \phi_2^*, \zeta (0)\rangle   +  \int_0^1 (\xi(s)-Ts) d\nu_1(s) 
+ \sum_{i=1}^m \mu_i [\psi_{i\xi}[T_*,1]T + \psi_{i\zeta}[T_*,1]\zeta(1) ]\notag\\
&+ \langle w^*, g_t(\cdot, \xi_*, \zeta_*, v_*)\xi + g_y(\cdot, \xi_*, \zeta_*, v_*)\zeta + g_u(\cdot, \xi_*, \zeta_*, v_*)v\rangle=0,  \quad    \forall (\xi, T, \zeta, v)\in Z.
\end{align}

\noindent {\bf Step 1}. Deriving optimality conditions for $v_*$.

By inserting $(\xi, T,  \zeta) = (0, 0, 0)$ into the above equality to get 
\begin{align}
\label{cm1}
    \lambda\int_{Q_1}   T_* L_u(x, \xi_*, \zeta_*, v_*)v  dxds   - \int_{Q_1} \phi_1 T_*v dxds + \langle w^*,  g_u(\cdot, \xi_*, \zeta_*, v_*)v\rangle = 0,  \quad \quad   \forall v \in U_1. 
\end{align}
It follows from (\ref{cm1}) that 
\begin{align*}
    g_u[\cdot, \cdot]w^* = T_*\phi_1  - \lambda T_* L_u[\cdot, \cdot]. 
\end{align*}
Hence
\begin{align}
    \label{cm3}
    w^* = \frac{T_*\phi_1  - \lambda T_* L_u[\cdot, \cdot]}{g_u[\cdot, \cdot]} \quad {\rm on} \ L^\infty(Q_1).
\end{align}
By $(H5)$, $\frac{1}{g_u[\cdot, \cdot]} \in L^p(Q_1)$. Since  $T_*\phi_1 - \lambda T_* L_u[\cdot, \cdot] \in L^q(Q_1)$,  we have  $\frac{T_*\phi_1 - \lambda T_* L_u[\cdot, \cdot]}{g_u[\cdot, \cdot]} \in L^1(Q_1)$. Consequently,   $w^*$ can be represented by a function  $e \in L^1(Q_1)$. From this and  (\ref{cm3}), we obtain  
$$
e(x,s)=\frac{T_*\phi_1(x,s)  - \lambda T_* L_u[x,s]}{g_u[x,s]} \quad {\rm a.a} \ (x, s)\in Q_1.
$$ This implies that
\begin{align} \label{cm4}
    \lambda T_* L_u[x, s]  -  T_*\phi_1(x, s) +  e(x, s) g_u[x, s]  =  0 \quad {\rm a.a.} \ (x, s) \in Q_1 
\end{align} which is assertion $(ii)$ of the proposition.  By \eqref{OptimRedCond4} and Corollary 4 in \cite{Pales}, we have 
$e(x, s)\in N([a, b], g[x, s])$ for a.a. $(x, s)\in Q_1$. 

\medskip

\noindent {\bf Step 2}. Deriving the first adjoint equation.

Inserting $(\xi, T, v) = (0, 0, 0)$ into equality (\ref{OptimRedCond7}), we get 
\begin{align}\label{cm5} 
&\int_{Q_1}\Big(\dfrac{\partial \zeta}{\partial s} + T_* A\zeta + T_*\psi_y[x, s]\zeta \Big) \phi_1 dxds     +\langle \phi_2^*, \zeta(0)\rangle   =\notag\\
&-\lambda\psi_{0\zeta}[T_*, 1]\zeta(1)- \sum_{i=1}^m \mu_i  \psi_{i\zeta}[T_*, 1]\zeta(1)  -  \int_{Q_1} \Big(\lambda T_* L_y[x, s] +  e g_y[x, s] \Big)\zeta  dxds, \quad \quad  \forall \zeta \in Y_1. 
\end{align}  Let us consider the  equation 
\begin{align}
    \label{cm7} 
    \begin{cases}
      - \dfrac{\partial \varphi}{\partial s} + T_* A^*\varphi + T_*\psi_y[x, s]\varphi = -  \lambda T_* L_y[\cdot, \cdot] -  e g_y[\cdot, \cdot]  \quad {\rm in} \ Q_1, \\
      \varphi = 0 \quad {\rm on} \ \Sigma_1, \\
      \varphi (1) =-\lambda\psi_{0T}[T_*,1] - \sum_{i=1}^m \mu_i  \psi_{i\zeta}[T_*, 1] \quad {\rm in} \ \Omega.
    \end{cases}
\end{align} Note that $ \lambda\psi_{0T}[T_*,1]+ \sum_{i=1}^m \mu_i\psi_{i\zeta}(\xi_*(1), \zeta_*(1))$ is a continuous linear mapping from $C(\bar\Omega)$ to $\mathbb{R}$. It can be identified  with a finite Radon  measure on $\bar\Omega$. Therefore, the above parabolic equation involves  measure data.  Since $0 \le T_*\psi_y[\cdot, \cdot] \in L^\infty(Q_1)$ and $- \lambda T_* L_y[\cdot, \cdot] - e g_y[\cdot, \cdot] \in L^1(Q_1)$, Theorem 4.2 in \cite{Arada-2002-1} implies that, equation (\ref{cm7}) has a unique weak solution $\varphi \in L^1(0, 1; W^{1, 1}_0(\Omega))$ and  the following Green formula is valid:
\begin{align}
    \label{cm8} 
    &\int_{Q_1}\Big(\dfrac{\partial \zeta}{\partial s} + T_* A\zeta + T_*\psi_y[x, s]\zeta \Big) \varphi dxds  +\int_\Omega \varphi(0)\zeta(0) dx=\notag\\
    &-\lambda\psi_{0T}[T_*,1]\zeta(1)- \sum_{i=1}^m \mu_i  \psi_{i\zeta}[T_*,1]\zeta(1)  -  \int_{Q_1} \Big(\lambda T_* L_y[x, s] +  e g_y[x, s] \Big)\zeta  dxds, \quad \quad  \forall \zeta \in Y_1.
\end{align} 
Subtracting (\ref{cm5}) from (\ref{cm8}), we get
\begin{align} \label{cm9} 
    \int_{Q_1}\Big(\dfrac{\partial \zeta}{\partial s} + T_* A\zeta + T_*\psi_y[x, s]\zeta \Big) \Big(\phi_1 -  \varphi \Big) dxds +\langle\phi_2^*, \zeta(0)\rangle-\int_\Omega \varphi(0)\zeta(0)dx  = 0 \quad  \forall \zeta \in Y_1. 
\end{align}
By Lemma \ref{Lemma-LinearizedEq}, we see that for each $\vartheta \in L^p(Q_1)$ and $\zeta_0\in H^1_0(\Omega)\cap W^{2-\frac{2}p, p}(\Omega)$ the  equation 
\begin{align*}
    \dfrac{\partial \zeta}{\partial s} + T_* A\zeta + T_*\psi_y[x, s]\zeta  =  \vartheta \quad {\rm in} \ Q_1, \quad \quad \zeta = 0 \quad {\rm on} \ \Sigma_1, \quad \quad \zeta(0)  =  \zeta_0 \quad {\rm in} \ \Omega
\end{align*}
has a unique solution $\zeta \in Y_1$. Inserting such a solution into (\ref{cm9}), we obtain 
\begin{align*}
    &\int_{Q_1} \big(\phi_1 -  \varphi \big) \vartheta dxds  = 0 \quad \quad  \forall \vartheta \in L^p(Q_1),\\
    &\langle\phi_2^*, \zeta_0\rangle-\int_\Omega\varphi(0)\zeta_0 dx=0\quad \forall \zeta_0\in H^1_0(\Omega)\cap W^{2-\frac{2}p, p}(\Omega). 
\end{align*} This implies that $\phi_2^*=\varphi(0)$ and 
$ \phi_1 =  \varphi$ on $Q_1$. Consequently, $\varphi \in L^q(Q_1)  \cap L^1(0, 1; W^{1, 1}_0(\Omega))$ and we obtain the first adjoint equation of assertion $(i)$ of the theorem. 
Besides, (\ref{cm4}) becomes
\begin{align}
    \label{cm12}
    \lambda T_* (s) L_u[x, s]  -  T_*\varphi(x, s) +  e(x, s) g_u[x, s]  =  0 \quad {\rm a.a.} \ (x, s) \in Q_1. 
\end{align} which is assertion $(ii)$ of the theorem.

\noindent {\bf Step 3}. Deriving the second adjoint equation (optimality condition for $\xi_*$).

Inserting $(T, \zeta, v) = (0, 0, 0)$ into  equality (\ref{OptimRedCond7}), we get
\begin{align}\label{cm13}
      - \int_0^1 \xi(s) d\nu_1(s)   =  \int_0^1 \int_\Omega\big(\lambda T_*L_t[x, s] +  \varphi T_*\psi_t[x, s] +   e g_t[x, s]\big)\xi dxds   \quad    \forall \xi \in C([0, 1], \mathbb{R}) 
\end{align} By Lemma 5.1 in \cite{Kien-2018}, the above equality is  valid for $\xi(s) =\xi_0 \chi_{(\tau, 1]}(s)$ with $0 \leq \tau < 1$, where $\chi_{(\tau,1]}$  is the indicator function of the set $(\tau, 1] \subset [0, 1]$. Inserting $\xi(s)$ into \eqref{cm13},  we get
\begin{align*}
      - \int_\tau^1\xi_0d\nu_1(s)   =  \int_\tau^1 \xi_0\int_\Omega\big(\lambda T_*L_t[x, s] +   \varphi T_*\psi_t[x, s] +  e g_t[x, s]\big) dxds \quad   \forall \xi_0\in\mathbb{R}, \     \forall \tau  \in [0, 1). 
\end{align*} 
From this and  definition of function $\phi$ in (\ref{defofp}), we obtain 
\begin{align*}
    \phi(\tau) =  \int_\tau^1 \int_\Omega\big(\lambda T_*L_t[x, s] +   \varphi T_*\psi_t[x, s] +    e g_t[x, s]\big) dxds, \quad \forall \tau  \in [0, 1). 
\end{align*} 
This implies that $\lim_{\tau\to 1}\phi(\tau)=0=\phi(1).$ We obtain equation \eqref{cm0.2}.

\noindent{\bf Step 4.} Establishing optimality conditions for $T_*$.

 Inserting $(\xi, \zeta, v) = (0, 0, 0)$ into (\ref{OptimRedCond7}) and notice that $- \int_0^1 Ts  d\nu_1(s)  =  \int_0^1 T\phi(s)ds$, we have
\begin{align*}
    \int_0^1 T\phi(s)ds+ \int_0^1 \int_\Omega \lambda L[x, s]T dxds &+  \int_0^1 \int_\Omega \varphi \Big(A\zeta_* +\psi[x, s]  -v_*\Big)T dxds  \\
    &+ \lambda \psi_{0\xi}[T_*,1]T+\sum_{i=1}^m \mu_i \psi_{i\xi}[T_*,1]T =   0,    \quad   \forall T \in  \mathbb{R}. 
\end{align*}
Hence we obtain 
\begin{align*}
    \int_0^1 \phi(s)ds+ \int_{Q_1}\Big(\lambda L[x, s]+  \varphi \big(A\zeta_* +\psi[x, s] -   v_*\big)\Big) dxds + \lambda \psi_{0\xi}[T_*,1]+\sum_{i=1}^m \mu_i \psi_{i\xi}[T_*,1]= 0  
\end{align*} which is equation \eqref{cm0.4}. 

\noindent{\bf Step 5.} Deriving the non-negative second-order condition.

Assertion $(v)$ of the theorem follows from condition (\ref{OptimRedCond5}) and formulas (\ref{DJ2})-(\ref{DG2}). 

Finally, if $(H6)'$ is satisfied,  then we have  from Lemma \ref{lemma-Robinson} that $(A3)'$ is valid. By the part $(b)$ of Proposition \ref{KeyProp}, we get $\lambda=1$.  The proof of the proposition is complete.  
\end{proof}

\medskip

In the rest of this section, we focus on second-order sufficient optimality conditions for  problem $(P_1)$.   For this  we need to enlarge the critical cone $\mathcal C_1[z_*]$ by the cone $\mathcal C'_1[z_*]$ which consists  of vectors $(\xi, T, \zeta, v) \in C([0, 1], \mathbb R)\times\mathbb{R} \times W^{1, 1}_2(0, 1; D, H) \times L^2(Q_1)$ satisfying the following conditions:

\begin{itemize}
    \item [$(b_1')$]     $\psi_{0T}[T_*, 1] T +\psi_{0\zeta}[T_*,1] \zeta(1) + \displaystyle \int_{Q_1}\big(T L[x, s] +  T_*[L_t[x, s]\xi  +  L_y[x, s]\zeta + L_u[x, s]v]\big) dxds \le 0$;

    \item [$(b_2')$]  $\dfrac{\partial \zeta}{\partial s} +T_*\big( A\zeta + \psi_t[\cdot, \cdot]\xi  +  \psi_y[\cdot, \cdot]\zeta -v\big)+ T(A\zeta_* +\psi[\cdot, \cdot]-v_*)=0, \quad    \zeta(0)=0$;

    \item [$(b_3')$] $\xi(s) =  Ts \quad \forall s \in [0, 1]$;

    \item [$(b_4')$]$\psi_{i\xi}[T_*, 1]T+\psi_{i\zeta}[T_*, 1]\zeta(1)\leq 0\ $  for $i\in \big\{1,2,.., m| \psi_i[T_*, 1]=0\big\}$;
      
    \item [$(b_5')$] $g_t[x, s]\xi(s) +  g_y[x, s]\zeta(x, s) + g_u[x, s]v(x, s)\in T([a, b],  g[x, s])$ for a.a. $(x, s)\in Q_1$.
\end{itemize}

The following proposition gives second-order sufficient conditions for locally optimal solutions to $(P_1)$.

\begin{proposition}\label{Pro-SOSC-P1}
    Suppose $z_*  = (\xi_*, T_*, \zeta_*, v_*)\in\Phi_1$ with $T_*>0$,  assumptions $(H1)$-$(H5)$ and $(H7)$,   and  there exist multipliers $(\lambda,  \mu, \varphi, \phi, e)$ with $\lambda=1$ satisfying  conditions $(i)-(iv)$ of Proposition \ref{Pro-KKT-P1}. Furthermore,  assume that the following conditions are fulfilled:

    \noindent $(v)$  (the strictly second-order condition)  
   \begin{align} \label{StrictSOC}  
   &\psi_{0TT}[T_*, 1]T^2 + 2\psi_{0 T\zeta}[T_*, 1]T\zeta(1)  +\psi_{0\zeta\zeta}[T_*, 1]\zeta^2(1) \notag\\
    & +\int_{Q_1} T_* \Big(L_{tt}[x, s]\xi^2   + 2L_{ty}[x, s]\xi \zeta+  2L_{tu}[x, s]\xi v +L_{yy}[x, s]\zeta^2 + L_{uu}[x, s]v^2  +  2L_{yu}[x, s]\zeta v\Big) dxds  \nonumber \\
     &+  \int_{Q_1} 2 T \Big(L_t[x, s]\xi  + L_y[x, s]\zeta  +  L_u[x, s]v\Big)  dxds     \nonumber \\
    &+  \int_{Q_1}\varphi \Big[T_*\Big(\psi_{tt}[x, s]\xi^2  +  \psi_{yy}[x, s]\zeta^2  + 2\psi_{ty}[x, s]\xi \zeta\Big)    + 2 T \Big(A \zeta +  \psi_t[x, s]\xi  + \psi_y[x, s]\zeta - v\Big)\Big]dxds \nonumber \\
     &+  \int_{Q_1} e(x, s) \Big(g_{tt}[x, s]\xi^2  +  g_{yy}[x, s]\zeta^2 + g_{uu}[x, s]v^2  + 2g_{ty}[x, s]\xi \zeta + 2g_{tu}[x, s]\xi v  +  2g_{yu}[x, s]\zeta v\Big)  dxds  \nonumber \\
     &+  \sum_{i = 1}^m \mu_i \Big(\psi_{iTT}[T_*, 1]T^2 + 2\psi_{i T\zeta}[T_*, 1]T\zeta(1)  +\psi_{i\zeta\zeta}[T_*, 1]\zeta^2(1)\Big) >0 
\end{align} for all $z=(\xi, T, \zeta, v)\in \mathcal{C}'_1[z_*]\setminus\{(0,0,0,0)\}$. 

    \noindent $(vi)$  (the Legendre-Clebsch condition) there is a number  $\Lambda > 0$ such that
    \begin{align} \label{Legendre-Clebsch-condition}
        T_*L_{uu}[x, s] + e(x, s)g_{uu}[x, s] \geq \Lambda   \quad {\rm a.a.} \ (x, s) \in Q_1. 
     \end{align}
    Then  there exist numbers $\delta > 0$ and $\kappa > 0$ such that 
    \begin{align}
        \widehat J (z) \ge \widehat J (z_*) + \kappa\Big( (T - T_*)^2    + \|v-v_*\|_{L^2(Q_1)}^2 \Big) 
    \end{align}
    for all $z = (\xi, T, \zeta, v) \in \Phi_1 \cap B_{*Z}(z_*, \delta)$. In particular, $z_*$ is a locally optimal solution to  $(P_1)$.
\end{proposition}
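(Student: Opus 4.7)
I would argue by contradiction following the classical Maurer--Zowe/Casas--Tröltzsch scheme adapted to the variables $(\xi,T,\zeta,v)$. Suppose the conclusion fails; then for every $k\in\mathbb{N}$ there exists $z_k=(\xi_k,T_k,\zeta_k,v_k)\in\Phi_1$ with $\|z_k-z_*\|_{*}\to0$ and
\[
\widehat J(z_k)<\widehat J(z_*)+\frac{1}{k}\Bigl((T_k-T_*)^2+\|v_k-v_*\|_{L^2(Q_1)}^2\Bigr).
\]
Set $\rho_k^2:=(T_k-T_*)^2+\|v_k-v_*\|_{L^2(Q_1)}^2$ (which is strictly positive for large $k$, otherwise $z_k=z_*$ trivially), and introduce the normalised increments $T_k':=(T_k-T_*)/\rho_k$, $v_k':=(v_k-v_*)/\rho_k$, $\xi_k':=(\xi_k-\xi_*)/\rho_k$, $\zeta_k':=(\zeta_k-\zeta_*)/\rho_k$. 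By the definition of $\Phi_1$ together with Lemma~\ref{Lemma-LinearizedEq}, linearising the state equation around $z_*$ shows that $\zeta_k'$ is bounded in $W^{1,1}_2(0,1;D,H)$, while $\xi_k'=T_k' s$ is bounded in $C([0,1],\mathbb{R})$ and $(T_k',v_k')$ is bounded in $\mathbb{R}\times L^2(Q_1)$ with $(T_k')^2+\|v_k'\|_{L^2}^2=1$.

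Passing to a subsequence, I may assume $T_k'\to T'$ in $\mathbb{R}$, $v_k'\rightharpoonup v'$ weakly in $L^2(Q_1)$, and by compactness $\zeta_k'\to\zeta'$ strongly in $L^2(0,1;V)\cap C([0,1],H)$ (and weakly in $W^{1,1}_2(0,1;D,H)$), and $\xi_k'\to T's$ uniformly. Passing to the limit in the linearised state equation and in the mixed, final-point and cost inequalities yields $(\xi',T',\zeta',v')\in\mathcal{C}_1'[z_*]$: condition $(b_2')$ comes from the equation, $(b_3')$ from $\xi_k'=T_k's$, $(b_4')$ from dividing $\psi_i(T_k,\zeta_k(1))\le 0$ by $\rho_k$ and using hypothesis $(H7)$, $(b_5')$ from $a\le g(\cdot,\xi_k,\zeta_k,v_k)\le b$ together with the characterisation of the tangent cone to $[a,b]$, and $(b_1')$ from the upper bound on $\widehat J(z_k)-\widehat J(z_*)$.

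Next I would apply a second-order Taylor expansion to the Lagrangian $\mathcal{L}$ at $z_*$ with the multipliers $(1,\mu,\varphi,\phi,e)$. By the first-order KKT relations $(i)$--$(iv)$ the first derivative $D_z\mathcal{L}(z_*)$ vanishes on admissible directions and the complementarity relations give $\mathcal{L}(z_k,1,\mu,\varphi,\phi,e)\le \widehat J(z_k)$. Combining the upper bound on $\widehat J(z_k)$ with the Taylor expansion
\[
\mathcal{L}(z_k)-\mathcal{L}(z_*)=\tfrac{1}{2}D_z^2\mathcal{L}(z_*)[z_k-z_*,z_k-z_*]+o(\|z_k-z_*\|^2),
\]
dividing by $\rho_k^2$ and letting $k\to\infty$, I obtain
\[
\limsup_{k\to\infty}\tfrac{1}{2}D_z^2\mathcal{L}(z_*)[\xi_k',T_k',\zeta_k',v_k']^2\le 0.
\]
All terms in the quadratic form that are bilinear or linear in $v_k'$ pass to the limit by weak convergence (their coefficients lie in $L^\infty$ and $\zeta_k'\to\zeta'$ strongly), while the purely quadratic term in $v$ is lower semicontinuous under weak $L^2$ convergence. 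Writing $v_k'=v'+r_k'$ with $r_k'\rightharpoonup 0$ and applying the Legendre--Clebsch condition \eqref{Legendre-Clebsch-condition}, the difference
\[
D_z^2\mathcal{L}(z_*)[z_k']^2-D_z^2\mathcal{L}(z_*)[(\xi',T',\zeta',v')]^2\ge \Lambda\|r_k'\|_{L^2(Q_1)}^2+o(1),
\]
so in the limit $D_z^2\mathcal{L}(z_*)[(\xi',T',\zeta',v')]^2+\Lambda\limsup\|r_k'\|^2_{L^2}\le 0$.

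Now either $(\xi',T',\zeta',v')\neq 0$, in which case the strict second-order condition \eqref{StrictSOC} is contradicted, or $(\xi',T',\zeta',v')=0$; in that case $T'=0$ and $v'=0$, whence $r_k'=v_k'$ and the Legendre--Clebsch estimate gives $\Lambda\limsup\|v_k'\|_{L^2}^2\le 0$, i.e.\ $v_k'\to 0$ strongly in $L^2(Q_1)$, which together with $T_k'\to 0$ contradicts the normalisation $(T_k')^2+\|v_k'\|_{L^2}^2=1$. The main obstacle in this plan is precisely this gap between weak and strong $L^2$-convergence of $v_k'$: the argument only closes because Legendre--Clebsch converts the weakly-vanishing residual into a strongly-vanishing one, and because one needs to verify carefully that the normalised limit still satisfies the enlarged critical-cone constraint $(b_5')$ (stated with tangent cones to $[a,b]$ almost everywhere, which is where hypothesis $(H7)$ and pointwise Egorov-type arguments on the active-set structure of $g[\cdot,\cdot]$ enter).
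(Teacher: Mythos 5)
Your proposal is correct and follows essentially the same contradiction argument as the paper's proof: normalize the increments by $\rho_k$, extract weak limits, show the limit direction lies in $\mathcal{C}_1'[z_*]$, and use the second-order Taylor expansion of the Lagrangian together with the Legendre--Clebsch condition to rule out both a nonzero limit (via the strict second-order condition) and a zero limit (via the normalization $(T_k')^2+\|v_k'\|_{L^2}^2=1$). The only cosmetic difference is that the paper treats the quadratic $v$-term by convexity and weak lower semicontinuity rather than by your explicit decomposition $v_k'=v'+r_k'$ with a vanishing cross term (which would additionally require $(T_*L_{uu}+eg_{uu})v'\in L^2(Q_1)$, not immediate since $e$ is only known to lie in $L^1(Q_1)$), but since you also invoke the lower-semicontinuity route the two arguments coincide.
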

\begin{proof}  
Let us define  the so-called Lagrange function for $(P_1)$:
\begin{align}\label{LagrangeFunction2}
\mathcal L\big(\xi,T, \zeta, v, \varphi, \phi,  \mu, e\big)= \widehat J(\xi, T,  \zeta, v) +\langle(\varphi,\phi), F(\xi,T, \zeta, v)\rangle + \langle \mu, H(\xi, T, \zeta, v)\rangle +\langle e, G(\xi, T, \zeta, v)\rangle,
\end{align} where 
\begin{align}
    & \langle(\varphi,\phi), F(\xi, T, \zeta, v)\rangle =\int_{Q_1}\Big[\zeta\big(-\dfrac{\partial \varphi}{\partial s} + T A^*\varphi \big) +  \big(\psi(x, \xi, \zeta) - v\big)T\varphi \Big]dxds  +\langle \varphi(1), \zeta(1)\rangle   \notag\\
    &+ \int_0^1 \xi(s) d\phi(s)+   \int_0^1  T \phi(s )  ds + \phi(1)\xi(1),     \label{DefLF}\\
    &\langle \mu, H(\xi, T, \zeta, v)\rangle= \sum_{i=1}^m \mu_i  \psi_i(T, \zeta(1)),   \nonumber \\
    &\langle e, G(\xi, T, \zeta, v)\rangle = \int_{Q_1} eg(x, \xi, \zeta, v) dx ds.\nonumber  
\end{align} Then  from conditions $(i)-(iii)$ of Proposition \ref{Pro-KKT-P1} we can show that 
\begin{align}
\label{ss0}
    D_z\mathcal{L}(z_*, \varphi, \phi, \mu, e)=0.
\end{align} Besides, for each $d=(\xi, T,  \zeta, v)\in \mathcal C'_1[z_*]$,  $D^2_{zz}\mathcal{L}(z_*, \varphi, \phi,\mu, e)[d,d]$ equals to the left hand side of \eqref{StrictSOC}.  Hence
\begin{align}
\label{ss0.1}
   D^2_{zz}\mathcal{L}(z_*, \varphi, \phi,\mu, e)[d,d]>0  \quad  \forall d\in \mathcal C'_1[z_*]\setminus\{(0,0,0,0)\}. 
\end{align}

We now suppose to the contrary that the conclusion of the proposition were false. Then we could find sequences $\{z_n = (\xi_n, T_n,  \zeta_n, v_n) \}\subset \Phi_1$ and $\{\gamma_n\}\subset \mathbb{R}_+$ such that  $\xi_n \to \xi_*$ in $C([0, 1], \mathbb R)$, $\zeta_n \to \zeta_*$ in $Y_1$, $v_n\to v_*$ in $L^\infty(Q_1)$, $T_n \to T_*$ in $\mathbb R$,  $\gamma_n\to 0^+$ and
\begin{align}
\label{ss1}
\widehat J(z_n) < \widehat J(z_*) + \gamma_n \bigg(\|v_n - v_*\|^2_{L^2(Q_1)}    + |T_n-T_*|^2\bigg). 
\end{align}
If $v_n = v_*$ and $T_n = T_*$ then by the uniqueness we have $\zeta_n=\zeta_*$ and $\xi_n = \xi_*$. This leads to  $\widehat J(z_*) < \widehat J(z_*)$ which is absurd. Therefore, we can assume that $\|v_n - v_*\|^2_{L^2(Q_1)}    + |T_n-T_*|^2 \ne 0$ for all $n>0$.   Define 
\begin{align*}
    t_n=  \Big(\|v_n - v_*\|^2_{L^2(Q_1)}    + |T_n-T_*|^2\Big)^{1/2}.  
\end{align*} Then $t_n\to 0^+$ as $n \to + \infty$ and we get from  (\ref{ss1}) that 
\begin{align}
    \label{ss2}
    \widehat J(z_n) - \widehat J(z_*) \le o(t_n^2).
\end{align}
Put
 $\widehat \xi_n= \frac{\xi_n-\xi_*}{t_n}$, $\widehat \zeta_n= \frac{\zeta_n-\zeta_*}{t_n}$, $\widehat v_n= \frac{v_n-v_*}{t_n}$ and $\widehat T_n= \frac{T_n-T_*}{t_n}$.   Then we have 
 \begin{align*}
     &|\widehat T_n|^2  +  \|\widehat v_n\|^2_{L^2(Q_1)} = 1.
 \end{align*} Therefore, we can assume that $\widehat T_n\to \widehat T$ in $\mathbb R$ and  $\widehat v_n \rightharpoonup \widehat v$ in $L^2(Q_1)$. On the other hand  we have
\begin{align*}
&\|\widehat \xi_n\|_{C([0,1], \mathbb{R})} =|\widehat T_n|\leq 1,\\
&|\widehat \xi_n(s_1)- \widehat \xi_n(s_2)|\le |s_1-s_2||\widehat T_n| \le |s_1-s_2|  \quad \forall s_1, s_2 \in [0,1]. 
\end{align*}Hence  $\{\widehat \xi_n\}$ is equicontinuous. By the Arzel\'{a}–Ascoli theorem, we can assume that $\widehat \xi_n\to \widehat \xi$ in $C([0,1], \mathbb{R})$. 

Let us claim that $\widehat \zeta_n \rightharpoonup \widehat \zeta$ for some in $\widehat\zeta\in W^{1, 1}_2(0, 1; D, H)$ and $\widehat \zeta_n(1)\to\widehat\zeta(1)$ in $L^2(\Omega)$. In fact, since $(\xi_n, T_n,  \zeta_n, v_n)\in \Phi_1$ and $(\xi_*, T_*,  \zeta_*, v_*)\in\Phi_1$ we have 
\begin{align*}
    &\frac{\partial \zeta_n}{\partial s}  + T_n A\zeta_n + T_n \psi(x, \xi_n, \zeta_n) = T_n v_n,  \quad \zeta_n(0) =  y_0,   \\
    &\frac{\partial \zeta_*}{\partial s}  + T_* A \zeta_* + T_* \psi(x, \xi_*, \zeta_*) = T_*  v_*,  \quad \zeta_*(0) =  y_0.
\end{align*}
This implies that
\begin{align}
\label{ss3}
     &\frac{\partial (\zeta_n-\zeta_*)}{\partial s}+ T_nA(\zeta_n-\zeta_*)    + T_n[\psi(x, \xi_*, \zeta_n) - \psi(x, \xi_*, \zeta_*)]    \nonumber \\
     &= T_n\Big[(v_n - v_*)  -  (\psi(x, \xi_n, \zeta_n) - \psi(x, \xi_*, \zeta_n))\Big] +  (T_n - T_*)[v_*  - A \zeta_* - \psi(x, \xi_*, \zeta_*)],      \nonumber \\
     &(\zeta_n-\zeta_*)(0)=0.
\end{align}
By Taylor's expansions, there exist measurable functions $\theta_{1, n}, \theta_{2, n}$ such that 
\begin{align*}
    &\psi(x, \xi_n, \zeta_n) - \psi(x, \xi_*, \zeta_n)   =     \psi_t(x, \xi_* + \theta_{1, n}(\xi_n - \xi_*), \zeta_n)(\xi_n - \xi_*),   \quad  0\leq\theta_{1, n}(x,t)\leq 1,     \\
    &\psi(x, \xi_*, \zeta_n)-\psi(x, \xi_*, \zeta_*) = \psi_y(x, \xi_*, \zeta_* +\theta_{2, n}(\zeta_n-\zeta_*)) (\zeta_n-\zeta_*), \quad  0\leq\theta_{2, n}(x,t)\leq 1. 
\end{align*}
Note that  $T_n \to T_*$ in $\mathbb R$, $\xi_n \to \xi_*$ in $C([0, 1], \mathbb R)$ and $\zeta_n\to \zeta_*$ in $L^\infty(Q_1)$ so there exists $M>0$ such that $|T_n|  + \|\xi_n\|_{C([0, 1], \mathbb R)}  + \|\zeta_n\|_{L^\infty(Q_1)} + \|\xi_* + \theta_{1, n}(\xi_n - \xi_*)\|_{C([0, 1], \mathbb R)} + \|\zeta_* +\theta_n(\zeta_{2, n}-\zeta_*)\|_{L^\infty(Q_1)}\le M$. By $(H2)$, there exists $k_M>0$ such that 
\begin{align*}
    |\psi_t(x, \xi_* + \theta_{1, n}(\xi_n - \xi_*), \zeta_n)|  +    |\psi_y(x, \xi_*, \zeta_* +\theta_{2, n}(\zeta_n-\zeta_*))|\le k_M. 
\end{align*} 
Hence $T_n \psi_y(x, \xi_*, \zeta_* +\theta_{2, n}(\zeta_n-\zeta_*))$ is bounded in $L^\infty(Q_1)$. From (\ref{ss3}) we get 
\begin{align}
\label{ss4}
     &\frac{\partial \widehat \zeta_n}{\partial s}+ T_nA\widehat \zeta_n    +  T_n \psi_y(x, \xi_*, \zeta_* +\theta_{2, n}(\zeta_n-\zeta_*)) \widehat \zeta_n  \nonumber \\ &= T_n [\widehat v_n   -   \psi_t(x, \xi_* + \theta_{1, n}(\xi_n - \xi_*), \zeta_n) \widehat \xi_n] +  [v_*  - A \zeta_* - \psi(x, \xi_*, \zeta_*)]\widehat T_n,  \quad \quad 
     \widehat \zeta_n(0)=0.
\end{align}
By \cite[Theorem 5, p.360]{Evan}, there exists a constant $c>0$ such that 
\begin{align*}
\|\widehat \zeta_n\|_{W^{1,1}_2(0, 1; D, H)}  &\le c \Big\|T_n [\widehat v_n   -   \psi_t(x, \xi_* + \theta_{1, n}(\xi_n - \xi_*), \zeta_n) \widehat \xi_n] +  [v_*  - A \zeta_* - \psi(x, \xi_*, \zeta_*)]\widehat T_n \Big\|_{L^2(Q_1)}  \\
&\le cM + c M k_M |Q_1|^{\frac{1}{2}} + c\|v_*  - A \zeta_* - \psi(x, \xi_*, \zeta_*)\|_{L^2(Q_1)}. 
\end{align*}
Hence $\{\widehat \zeta_n\}$ is bounded in $W^{1, 1}_2(0, 1; D, H)$. Without loss of generality, we may assume that $\widehat \zeta_n \rightharpoonup \widehat \zeta$ in $W^{1, 1}_2(0, 1, D, H)$. By the Aubin theorem, the imbedding $W^{1, 1}_2(0, 1, D, H)\hookrightarrow L^2(0, 1; H)$ is compact. Hence $\widehat \zeta_n\to\widehat \zeta$ in norm of $L^2(0, 1; H)$. This implies that $\widehat \zeta_n(t)\to\widehat \zeta(t)$ a.a. $t\in [0, 1]$. On the other hand $W^{1, 1}_2(0, 1, D, H)\hookrightarrow C([0, 1],  H)$. Hence $\widehat \zeta_n(1)\to\widehat \zeta(1)$ in $H$.  The claim is justified. 

Let us divide the remainder of the proof into some steps.

\noindent {\bf Step 1.}     Showing that $(\widehat \xi, \widehat T,  \widehat \zeta, \widehat v) \in \mathcal C_1'[(\xi_*, T_*, \zeta_*, v_*)]$.

 We now  use the procedure in the proof of \cite[Theorem 3, p. 356]{Evan}. By  passing to the limit,   we obtain from (\ref{ss4}) that
\begin{align*}
     \frac{\partial \widehat \zeta}{\partial s}+ T_*A\widehat \zeta    +  T_* \psi_y[\cdot, \cdot] \widehat \zeta  = T_* (\widehat v - \psi_t[\cdot, \cdot]\widehat \xi) +  (v_*  - A \zeta_* - \psi[\cdot, \cdot])\widehat T,   \quad  \widehat \zeta(0)=0
\end{align*}
which implies that condition $(b'_2)$ is satisfied. Also, we have 
\begin{align}
    \label{ss6.1}
    |\widehat T_n s - \widehat \xi (s)|  =  |\widehat \xi_n (s) - \widehat \xi (s)| \le {\rm max}_{s \in [0, 1]}|\widehat \xi_n (s) - \widehat \xi (s)| = \|\widehat \xi_n - \widehat \xi\|_{C([0, 1], \mathbb R)} \quad \forall s \in [0, 1]. 
\end{align}
Letting $n\to\infty$ in (\ref{ss6.1}) and notice that $\widehat \xi_n\to \widehat \xi$ in $C([0,1], \mathbb{R})$, we obtain 
\begin{align*}
    \widehat \xi (s) = \widehat T s   \quad \forall s \in [0, 1].
\end{align*}
This implies that $(b'_3)$ is valid. 

By the mean value theorem, we have from (\ref{ss2}) that
\begin{align} \label{ss7}  
    &\psi_{0T}(T_*+\rho_1(T_n-T_*), \zeta_n(1))\widehat T_n +\psi_{0\zeta}(T_*, \zeta_*+\rho_2(\zeta_n(1)-\zeta_*(1)))\widehat\zeta_n(1)\notag\\
    &+\int_{Q_1}\widehat T_n L(x, \xi_n, \zeta_n, v_n) dx ds  +   \int_{Q_1} T_* L_t(x, \xi_* + \eta_1(\xi_n - \xi_*), \zeta_n, v_n)\widehat \xi_n dx ds  \nonumber \\
    &+ \int_{Q_1} T_* L_y(x, \xi_*, \zeta_* + \eta_2(\zeta_n - \zeta_*), v_n)\widehat \zeta_n dx ds +  \int_{Q_1} T_* L_u(x, \xi_*, \zeta_*, v_* +\eta_3(v_n - v_*))\widehat v_n dx ds  \le \frac{o(t^2_n)}{t_n},
\end{align}
where $0 \le \rho_i, \eta_j \le 1$. Since $\xi_n \to \xi_*$ in $C([0, 1], \mathbb R)$, $\zeta_n \to \zeta_*$ in $Y_1$, $v_n\to v_*$ in $L^\infty(Q_1)$ and $(H4)$, we have 
\begin{align*}
    &\|L(\cdot, \xi_n, \zeta_n, v_n)  -  L(\cdot, \xi_*, \zeta_*, v_*)\|_{L^\infty(Q_1)} \to 0 \quad {\rm as} \quad n \to \infty,   \\
    &\|L_t(\cdot, \xi_* + \eta_1(\xi_n - \xi_*), \zeta_n, v_n)  -  L_t(\cdot, \xi_*, \zeta_*, v_*)\|_{L^\infty(Q_1)} \to 0 \quad {\rm as} \quad n \to \infty,   \\
    &\|L_y(\cdot, \xi_*, \zeta_* + \eta_2(\zeta_n - \zeta_*), v_n)  -  L_y(\cdot, \xi_*, \zeta_*, v_*)\|_{L^\infty(Q_1)} \to 0 \quad {\rm as} \quad n \to \infty,   \\
    &\|L_u(\cdot, \xi_*, \zeta_*, v_* +\eta_3(v_n - v_*))  -  L_u(\cdot, \xi_*, \zeta_*, v_*)\|_{L^\infty(Q_1)} \to 0 \quad {\rm as} \quad n \to \infty.
\end{align*}
Combining this with $(H7)$ and the fact  $\widehat \zeta_n(1)\to \widehat \zeta(1)$ in $L^2(\Omega)$,    we deduce from (\ref{ss7}) when  $n\to\infty$ that
\begin{align*}
    \psi_{0T}[T_*, 1]\widehat T +\psi_{0\zeta}[T_*,1]\widehat\zeta(1)+\int_{Q_1}\Big(\widehat T L[x, s] +  T_*[L_t[x, s]\widehat \xi  +  L_y[x, s]\widehat \zeta + L_u[x, s]\widehat v]\Big) dxds \le 0.
\end{align*}
Hence condition $(b'_1)$  of  $\mathcal C'_1[z_*]$ is valid.  It remains to verify  conditions $(b'_4)$ and $(b'_5)$  of $\mathcal C'_1[z_*]$.  For each $i \in \{1, 2, ..., m\}$, we have 
\begin{align}
    \label{ss9}
    \psi_i(\xi_n(1), \zeta_n(1)) - \psi_i(\xi_*(1), \zeta_*(1)) 
    \in (- \infty, 0] - \psi_i(\xi_*(1), \zeta_*(1)).
\end{align}
Using the mean value theorem for the left hand side of (\ref{ss9}) and dividing both sides by $t_n$, we get
\begin{align} \label{ss10}
    &\psi_{iT}(T_* + \eta_4(T_n- T_*)), \zeta_n(1))\widehat T_n + \psi_{i\zeta}(T_*, \zeta_*(1) + \eta_5(\zeta_n(1) - \zeta_*(1)))\widehat \zeta_n(1) dx  \nonumber \\    
    &\in \frac{1}{t_n} \Big((- \infty, 0] - \psi_i(T_*, \zeta_*(1))\Big) \subseteq T\big((- \infty, 0]; \psi_i(T_*, \zeta_*(1))\big), 
\end{align}
where $0 \le \eta_4, \eta_5 \le 1$. Notice that $T_n \to \widehat T$ in $\mathbb R$,  $\widehat \zeta_n(1) \to \widehat \zeta(1)$ in $L^2(\Omega)$ and 
\begin{align*}
    &\|\psi_{it}(T_* + \eta_4(\xi_n(1) - \xi_*(1)), \zeta_n(1))  -   \psi_{i t}(T_*, \zeta_*(1))\|_{L^\infty(\Omega)}  \to 0 \quad {\rm as} \quad n \to \infty,   \\
    &\|\psi_{iy}(T_*, \zeta_*(1) + \eta_5(\zeta_n(1) - \zeta_*(1)))  -   \psi_{i y}(T_*, \zeta_*(1))\|_{L^\infty(\Omega)}  \to 0 \quad {\rm as} \quad n \to \infty.
\end{align*}
Letting $n \to \infty$ in (\ref{ss10}) and using $(H7)$, we get 
\begin{align*}
    \displaystyle \psi_{i T}(T_*, \zeta_*(1))\widehat T +  \displaystyle  \psi_{i \zeta}(T_*, \zeta_*(1))\widehat \zeta(1) \in T((-\infty, 0]; \psi_i(T_*, \zeta_*(1))), \quad i =  1,2,.., m.
\end{align*} Hence condition $(b'_4)$  is valid.  Also, we have
\begin{align*}
    G(\xi_n, T_n, \zeta_n, v_n)-G(\xi_*, T_*, \zeta_*, v_*)\in (K_1- g[\cdot, \cdot]).
\end{align*} 
 By a Taylor expansion and the definitions of $\widehat \xi_n, \widehat \zeta_n, \widehat v_n$, we have
\begin{align}\label{ss11}   
    g_t[\cdot, \cdot]\widehat \xi_n  +  g_y[\cdot, \cdot]\widehat \zeta_n + g_u[\cdot, \cdot]\widehat v_n  + \frac{o(t_n)}{t_n} \in \frac{1}{t_n} \big(K_1 - g[\cdot, \cdot] \big)      \subseteq {\rm cone}(K_1 - g[\cdot, \cdot])      \subseteq T_{L^2}(K_1; g[\cdot, \cdot])   
\end{align}
Since $T_{L^2}(K_1; g[\cdot, \cdot])$ is a closed convex subset of $L^2(Q_1)$, it is a weakly closed set of $L^2(Q_1)$ and so $T_{L^2}(K_1; g[\cdot, \cdot])$  is sequentially weakly closed.  By a standard argument, we can show that 
$$
g_t[\cdot, \cdot]\widehat \xi_n  +  g_y[\cdot, \cdot]\widehat \zeta_n + g_u[\cdot, \cdot]\widehat v_n \rightharpoonup g_t[\cdot, \cdot]\widehat \xi  +  g_y[\cdot, \cdot]\widehat \zeta + g_u[\cdot, \cdot]\widehat v\quad\text{in}\quad L^2(Q_1).
$$ 
 Letting $n\to\infty$ in (\ref{ss11}), we obtain 
\begin{align}\label{ss12}   
        g_t[\cdot, \cdot]\widehat \xi  +  g_y[\cdot, \cdot]\widehat \zeta + g_u[\cdot, \cdot]\widehat v  \in T_{L^2}(K_1; g[\cdot, \cdot]).
\end{align}
By Lemma 2.4 in \cite{Kien-2017}, we have
\begin{align*}
    T_{L^2}(K_1; g[\cdot, \cdot])  = \{ v\in L^2(Q_1): v(x, s)\in T([a, b]; g[x, s]) \quad {\rm a.a.} \ (x, s)\in Q_1\}.
\end{align*} Combining this with \eqref{ss12}, yields  
\begin{align*}
     g_t[x, s]\xi(s) +  g_y[x, s]\zeta(x, s) + g_u[x, s)v(x, s)\in T([a, b],  g[x, s]) \quad  {\rm a.a.} \  (x, s)\in Q_1.
\end{align*} Hence condition $(b'_5)$ is valid.   Consequently, $(\widehat \xi, \widehat T, \widehat \zeta, \widehat v) \in \mathcal C'_1[(\xi_*, T_*, \zeta_*, v_*)]$.

\medskip

\noindent \textbf{Step 2.}  Proving that $(\widehat \xi, \widehat T,  \widehat \zeta, \widehat v)= (0, 0, 0, 0)$. 
\medskip

Since $F(\xi_n, T_n,  \zeta_n, v_n) = F(\xi_*, T_*,  \zeta_*, v_*) = 0$ and integration by part,  we have from \eqref{DefLF} that  
\begin{align*}
    \langle (\varphi, \phi),   F(\xi_n, T_n,  \zeta_n, v_n) - F(\xi_*, T_*, \zeta_*, v_*)\rangle = 0.
\end{align*} Since $\langle \mu, H(\xi_*, T_*, \zeta_*, v_*)\rangle=0$ and $\mu\geq 0$, we have 
$$
\langle\mu,  H(\xi_n, T_n,  \zeta_n, v_n) - H(\xi_*, T_*, \zeta_*, v_*)\rangle \le 0.
$$ Since $e\in N(K_1, g[\cdot, \cdot])$, we get 
$$
\langle e,  G(\xi_n, T_n, \zeta_n, v_n) - G(\xi_*, T_*, \zeta_*, v_*)\rangle \le 0. 
$$ From these facts and  definition of $\mathcal{L}$ in \eqref{LagrangeFunction2}, we obtain  
\begin{align} \label{ss13}
    &\mathcal L\Big((\xi_n, T_n, \zeta_n, v_n), \varphi, \phi,  \mu, e\Big)  -  \mathcal L\Big((\xi_*, T_*, \zeta_*, v_*), \varphi, \phi,  \mu, e\Big)=\notag \\
    &\widehat J(\xi_n, T_n, \zeta_n, v_n) - \widehat J(\xi_*, T_*, \zeta_*, v_*) +\langle (\varphi, \phi), F(\xi_n, T_n, \zeta_n, v_n)-F(\xi_*, T_*, \zeta_*, v_*)\rangle + \notag\\
    &\langle \mu, H(\xi_n, T_n, \zeta_n, v_n)-H(\xi_*, T_*, \zeta_*, v_*)\rangle+\langle e, G(\xi_n, T_n, \zeta_n, v_n)-G(\xi_*, T_*, \zeta_*, v_*)\rangle\notag \\
    &\leq \widehat J(\xi_n, T_n, \zeta_n, v_n) - \widehat J(\xi_*, T_*, \zeta_*, v_*) \leq o(t_n^2).
\end{align} Here the last estimate in (\ref{ss13}) follows from (\ref{ss2}).  
From (\ref{ss13}), (\ref{ss0}) and by Taylor expansions for $\mathcal{L}$, we get 
\begin{align}
    \frac{o(t_n^2)}{t_n^2}& \geq \frac{2}{t_n^2}  \Big\{ \mathcal L\Big((\xi_n, \zeta_n, v_n, T_n), \varphi, \phi,  \mu, e\Big)  -  \mathcal L\Big((\xi_*, \zeta_*, v_*, T_*), \varphi, \phi,  \mu, e\Big)  \Big\}  \nonumber  \\
    &=\psi_{0TT}(T_* + \beta_{01}(T_n -T_*), \zeta_n(1)) \widehat T_n ^2 + 2\psi_{0T\zeta}(T_*, \zeta_*(1) + \beta_{02}(\zeta_n(1) - \zeta_*(1))) \widehat T_n \widehat \zeta_n(1) \nonumber \\
    &+ \psi_{0\zeta\zeta}(T_*, \zeta_*(1) + \beta_{03}(\zeta_n(1) - \zeta_*(1)))  \widehat \zeta_n^2(1) \nonumber \\
    &+\int_{Q_1} T_n \Big[L_{tt}(x, \xi_* + \alpha_1(\xi_n - \xi_*), \zeta_n, v_n)\widehat \xi_n^2 +   L_{yy}(x, \xi_*, \zeta_* + \alpha_2(\zeta_n - \zeta_*), v_n )\widehat \zeta_n^2 \Big]  dxds  \nonumber \\
    &+ \int_{Q_1} T_n \Big[L_{uu}(x, \xi_*, \zeta_*, v_* + \alpha_3(v_n - v_*))\widehat v_n^2  + 2L_{ty}(x, \xi_*, \zeta_* + \alpha_4(\zeta_n - \zeta_*), v_n)\widehat \xi_n \widehat \zeta_n \Big]  dxds  \nonumber \\
    &+ \int_{Q_1} T_n \Big[ 2L_{tu}(x, \xi_*, \zeta_*, v_* + \alpha_5(v_n - v_*))\widehat \xi_n \widehat v_n  +  2L_{yu}(x, \xi_*, \zeta_*, v_* + \alpha_6(v_n - v_*))\widehat \zeta_n \widehat v_n\Big]  dxds  \nonumber \\
    &+   \int_{Q_1} 2 \widehat T_n \Big[L_t[x, s]\widehat \xi_n  + L_y[x, s]\widehat \zeta_n  +  L_u[x, s]\widehat v_n\Big]  dxds \nonumber \\
    &+ \int_{Q_1}\varphi T_n  \Big[ \psi_{tt}(x, \xi_* + \alpha_7(\xi_n - \xi_*), \zeta_n)\widehat \xi_n^2 +   \psi_{yy}(x, \xi_*, \zeta_* + \alpha_8(\zeta_n - \zeta_*))\widehat \zeta_n^2      \Big]dx ds   \nonumber \\
    &+ \int_{Q_1}\Big[ 2 \varphi T_n   \psi_{ty}(x, \xi_*, \zeta_* + \alpha_9(\zeta_n - \zeta_*))\widehat \xi_n \widehat \zeta_n    +  2 \varphi \widehat T_n \Big(A\widehat \zeta_n +   \psi_t[x, s]\widehat \xi_n  + \psi_y[x, s]\widehat \zeta_n   - \widehat v_n \Big)\Big] dx ds  \nonumber \\
    &+ \sum_{i = 1}^m \mu_i \psi_{iTT}(T_* + \beta_{i1}(T_n -T_*), \zeta_n(1)) \widehat T_n ^2 + \sum_{i = 1}^m \mu_i 2\psi_{iT\zeta}(T_*, \zeta_*(1) + \beta_{i2}(\zeta_n(1) - \zeta_*(1))) \widehat T_n \widehat \zeta_n(1) \nonumber \\
    &+ \sum_{i = 1}^m \mu_i \psi_{i\zeta\zeta}(T_*, \zeta_*(1) + \beta_{i3}(\zeta_n(1) - \zeta_*(1)))  \widehat \zeta_n^2(1) \nonumber \\
    &+ \int_{Q_1}e(x, s) \Big(g_{tt}(x, \xi_* + \alpha_{10}(\xi_n - \xi_*), \zeta_n, v_n)\widehat \xi_n^2 +   g_{yy}(x, \xi_*, \zeta_* + \alpha_{11}(\zeta_n - \zeta_*), v_n )\widehat \zeta_n^2 \Big)  dxds  \nonumber \\
    &+ \int_{Q_1}e(x, s) \Big(g_{uu}(x, \xi_*, \zeta_*, v_* + \alpha_{12}(v_n - v_*))\widehat v_n^2  + 2g_{ty}(x, \xi_*, \zeta_* + \alpha_{13}(\zeta_n - \zeta_*), v_n)\widehat \xi_n \widehat \zeta_n \Big)  dxds  \nonumber \\
    &+ \int_{Q_1}e(x, s) \Big( 2g_{tu}(x, \xi_*, \zeta_*, v_* + \alpha_{14}(v_n - v_*))\widehat \xi_n \widehat v_n  +  2g_{yu}(x, \xi_*, \zeta_*, v_* + \alpha_{15}(v_n - v_*))\widehat \zeta_n \widehat v_n\Big)  dxds \notag\\
    &=:  \Sigma_n +   \int_{Q_1} \big[T_nL_{uu}(x, \xi_*, \zeta_*, v_* + \alpha_3(v_n - v_*))\widehat v_n^2 + e(x, s) g_{uu}(x, \xi_*, \zeta_*, v_* + \alpha_{12}(v_n - v_*))\widehat v_n^2\big]dxds,   \label{ss13.1}
\end{align}
where $0 \le \alpha_i, \beta_{kj} \le 1$,  $i = 1, 2,..., 15$, $k = 0,  1, 2, ..., m$, $j = 1, 2, 3$, and 
\begin{align}
\Sigma_n&=\psi_{0TT}(T_* + \beta_{01}(T_n -T_*), \zeta_n(1)) \widehat T_n ^2 + 2\psi_{0T\zeta}(T_*, \zeta_*(1) + \beta_{02}(\zeta_n(1) - \zeta_*(1))) \widehat T_n \widehat \zeta_n(1) \nonumber \\
    &+ \psi_{0\zeta\zeta}(T_*, \zeta_*(1) + \beta_{03}(\zeta_n(1) - \zeta_*(1)))  \widehat \zeta_n^2(1) \nonumber \\
    &+\int_{Q_1} T_n \Big[L_{tt}(x, \xi_* + \alpha_1(\xi_n - \xi_*), \zeta_n, v_n)\widehat \xi_n^2 +   L_{yy}(x, \xi_*, \zeta_* + \alpha_2(\zeta_n - \zeta_*), v_n )\widehat \zeta_n^2 \Big]  dxds  \nonumber \\
    &+ \int_{Q_1} T_n \Big[2L_{ty}(x, \xi_*, \zeta_* + \alpha_4(\zeta_n - \zeta_*), v_n)\widehat \xi_n \widehat \zeta_n \Big]  dxds  \nonumber \\
    &+ \int_{Q_1} T_n \Big[ 2L_{tu}(x, \xi_*, \zeta_*, v_* + \alpha_5(v_n - v_*))\widehat \xi_n \widehat v_n  +  2L_{yu}(x, \xi_*, \zeta_*, v_* + \alpha_6(v_n - v_*))\widehat \zeta_n \widehat v_n\Big]  dxds  \nonumber \\
    &+ \int_{Q_1} 2 \widehat T_n \Big[L_t[x, s]\widehat \xi_n  + L_y[x, s]\widehat \zeta_n  +  L_u[x, s]\widehat v_n\Big]  dxds \nonumber \\
     &+ \int_{Q_1}\varphi T_n  \Big[ \psi_{tt}(x, \xi_* + \alpha_7(\xi_n - \xi_*), \zeta_n)\widehat \xi_n^2 +  \psi_{yy}(x, \xi_*, \zeta_* + \alpha_8(\zeta_n - \zeta_*))\widehat \zeta_n^2      \Big]dx ds   \nonumber \\
    &+ \int_{Q_1}\Big[ 2 \varphi T_n   \psi_{ty}(x, \xi_*, \zeta_* + \alpha_9(\zeta_n - \zeta_*))\widehat \xi_n \widehat \zeta_n    +  2 \varphi \widehat T_n \Big(A\widehat \zeta_n +   \psi_t[x, s]\widehat \xi_n  + \psi_y[x, s]\widehat \zeta_n   - \widehat v_n \Big)\Big] dx ds  \nonumber \\
    &+ \sum_{i = 1}^m \mu_i \psi_{iTT}(T_* + \beta_{i1}(T_n -T_*), \zeta_n(1)) \widehat T_n ^2 + \sum_{i = 1}^m \mu_i 2\psi_{iT\zeta}(T_*, \zeta_*(1) + \beta_{i2}(\zeta_n(1) - \zeta_*(1))) \widehat T_n \widehat \zeta_n(1) \nonumber \\
    &+ \sum_{i = 1}^m \mu_i \psi_{i\zeta\zeta}(T_*, \zeta_*(1) + \beta_{i3}(\zeta_n(1) - \zeta_*(1)))  \widehat \zeta_n^2(1) \nonumber \\
    &+ \int_{Q_1}e(x, s) \Big(g_{tt}(x, \xi_* + \alpha_{10}(\xi_n - \xi_*), \zeta_n, v_n)\widehat \xi_n^2 +   g_{yy}(x, \xi_*, \zeta_* + \alpha_{11}(\zeta_n - \zeta_*), v_n )\widehat \zeta_n^2 \Big)  dxds  \nonumber \\
    &+ \int_{Q_1}e(x, s) \Big(2g_{ty}(x, \xi_*, \zeta_* + \alpha_{13}(\zeta_n - \zeta_*), v_n)\widehat \xi_n \widehat \zeta_n \Big)  dxds  \nonumber \\
    &+ \int_{Q_1}e(x, s) \Big( 2g_{tu}(x, \xi_*, \zeta_*, v_* + \alpha_{14}(v_n - v_*))\widehat \xi_n \widehat v_n  +  2g_{yu}(x, \xi_*, \zeta_*, v_* + \alpha_{15}(v_n - v_*))\widehat \zeta_n \widehat v_n\Big)  dxds.  \nonumber
\end{align}
It follows from (\ref{ss13.1}) that 
\begin{align}\label{Convergence0}
0\geq \lim_{n\to\infty}\Big[\Sigma_n +  \int_{Q_1}\big[ T_n L_{uu}(x, \xi_*, \zeta_*, v_* + \alpha_3(v_n - v_*))\widehat v_n^2 + e(x, s)g_{uu}(x, \xi_*, \zeta_*, v_* + \alpha_{12}(v_n - v_*))\widehat v_n^2  \big ]dxds\Big].
\end{align}        
Recall that $\|\xi_n- \xi_*\|_{C([0, 1], \mathbb R)}\to 0$, $\|\zeta_n- \zeta_*\|_{L^\infty(Q_1)}\to 0$, $\|v_n- v_*\|_{L^\infty(Q_1)}\to 0$, $|T_n- T_*|\to 0$, $\|\widehat\zeta_n-\widehat \zeta\|_{L^2(Q_1)}\to 0$, $|\widehat T_n -\widehat T|\to 0$, $\widehat v_n \rightharpoonup \widehat v$ in $L^2(Q_1)$ and $\|\widehat\zeta_n(1)-\widehat\zeta(1)\|_{L^2(\Omega)}\to 0$.  
Combining these facts with  $(H4)$ and $(H7)$,  we get 
\begin{align}\label{limitSigma}
 \lim_{n\to\infty}\Sigma_n&=\psi_{0TT}[T_*,1] \widehat T ^2 + 2\psi_{0T\zeta}[T_*,1] \widehat T \widehat \zeta(1)+ \psi_{0\zeta\zeta}[T_*,1]\widehat \zeta^2(1) \nonumber \\
    &+\int_{Q_1} T_* \Big[L_{tt}[x, s]\widehat \xi^2 +   L_{yy}[x,s]\widehat \zeta^2 \Big]  dxds  + \int_{Q_1} T_* \Big[2L_{ty}[x, s]\widehat \xi \widehat \zeta \Big]  dxds  \nonumber \\
    &+ \int_{Q_1} T_* \Big[ 2L_{tu}[x,s]\widehat \xi \widehat v  +  2L_{yu}[x, s]\widehat \zeta \widehat v\Big]  dxds +   \int_{Q_1} 2 \widehat T \Big[L_t[x, s]\widehat \xi  + L_y[x, s]\widehat \zeta  +  L_u[x, s]\widehat v\Big]  dxds \nonumber \\
    &+  \int_{Q_1}\varphi \Big[T_*\Big(\psi_{tt}[x, s]\xi^2  +  \psi_{yy}[x, s]\zeta^2  + 2\psi_{ty}[x, s]\xi \zeta\Big)    + 2 T \Big(A \zeta +  \psi_t[x, s]\xi  + \psi_y[x, s]\zeta - v\Big)\Big]dxds \nonumber \\
    &+ \sum_{i = 1}^m \big(\mu_i \psi_{iTT}[T_*,1] \widehat T^2 + \mu_i 2\psi_{iT\zeta}[T_*, 1] \widehat T \widehat \zeta(1)+  \mu_i \psi_{i\zeta\zeta}[T_*,1] \widehat \zeta^2(1) \big)\nonumber \\
    &+ \int_{Q_1}e(x, s)\big(g_{tt}[x,s]\widehat \xi^2 +   g_{yy}[x,s]\widehat \zeta^2 +2g_{ty}[x,s]\widehat \xi \widehat \zeta +2g_{tu}[x,s]\widehat \xi \widehat v  +  2g_{yu}[x,s]\widehat \zeta \widehat v\big)  dxds\notag\\
    &=: \Sigma.   
\end{align} We now deal with the second term in \eqref{Convergence0}. By using  assumption $(vi)$, we see that the function 
$$
L^2(Q_1)\ni v\mapsto \int_{Q_1}\big[T_* L_{uu}[x,s] + e(x,s)g_{uu}[x,s] \big] v^2(x,s)dxds
$$ is convex and so it is sequentially lower semicontinuous. Hence 
$$
\liminf_{n\to\infty} \int_{Q_1}\big[T_* L_{uu}[x,s] + e(x,s)g_{uu}[x,s] \big] \widehat v_n^2(x,s)dxds\geq \int_{Q_1}\big[T_* L_{uu}[x,s] + e(x,s)g_{uu}[x,s] \big] \widehat v^2(x,s)dxds.
$$ This implies that 
\begin{align*}
    &\liminf_{n \to \infty} \int_{Q_1}\Big(T_n L_{uu}(x, \xi_*, \zeta_*, v_* + \alpha_3(v_n - v_*))  +  eg_{uu}(x, \xi_*, \zeta_*, v_* + \alpha_{12}(v_n - v_*))\Big)\widehat v^2_n  dxds  \\
    &\geq  \liminf_{n \to \infty} \int_{Q_1}\Big(T_n L_{uu}(x, \xi_*, \zeta_*, v_* + \alpha_3(v_n - v_*))  +  eg_{uu}(x, \xi_*, \zeta_*, v_* + \alpha_{12}(v_n - v_*)) \\  
    &\quad\quad\quad - T_* L_{uu}[x, s] - eg_{uu}[x, s]\Big)\widehat v^2_n  dxds + \liminf_{n \to \infty} \int_{Q_1}\Big(T_* L_{uu}[x, s] + eg_{uu}[x, s]\Big)\widehat v^2_n  dxds\\  
    &\ge  \int_{Q_1}\Big(T_* L_{uu}[x, s] + eg_{uu}[x, s]\Big)\widehat v^2  dxds.
\end{align*} Combining this with \eqref{limitSigma} and \eqref{Convergence0} we have  
\begin{align*}
&0\geq  \liminf_{n\to\infty}\Sigma_n\\
&+ \liminf_{n\to\infty}\Big[\int_{Q_1}\big[ T_n L_{uu}(x, \xi_*, \zeta_*, v_* + \alpha_3(v_n - v_*))\widehat v_n^2 + e(x, s)g_{uu}(x, \xi_*, \zeta_*, v_* + \alpha_{12}(v_n - v_*))\widehat v_n^2  \big ]dxds\Big]\\
&\geq \Sigma + \int_{Q_1}\Big(T_* L_{uu}[x, s] + eg_{uu}[x, s]\Big)\widehat v^2  dxds\\
&= D^2_{zz}\mathcal{L}(z_*, \varphi, \mu, \phi,  e)\Big[(\widehat \xi,  \widehat T, \widehat \zeta, \widehat v),(\widehat \xi, , \widehat T, \widehat \zeta, \widehat v)\Big]. 
\end{align*} From this and strictly second-order condition (\ref{ss0.1}), we conclude that $(\widehat \xi, \widehat T, \widehat \zeta, \widehat v)= (0, 0, 0, 0)$.

\medskip

\noindent \textbf{Step 3.}   Showing a contradiction.  
\medskip

From Step 2, we have $\widehat T_n\to 0$,  $|\widehat T_n|^2\to 0$, $\widehat\xi_n\to 0$ in $C([0,1], \mathbb{R})$ and  $\widehat\zeta_n\to 0$ in $L^2(Q_1)$. Hence $\lim_{n\to\infty}\Sigma_n=\Sigma=0$. Note that  $|\widehat T_n|^2 +   \|\widehat v_n\|^2_{L^2(Q_1)} = 1$. Using these facts and \eqref{Legendre-Clebsch-condition}, we have from \eqref{Convergence0} that
\begin{align*}
&0\geq \liminf_{n\to\infty}\Sigma_n\\
&+\liminf_{n\to\infty}\int_{Q_1}\big[ T_n L_{uu}(x, \xi_*, \zeta_*, v_* + \alpha_3(v_n - v_*)) + e(x, s)g_{uu}(x, \xi_*, \zeta_*, v_* + \alpha_{12}(v_n - v_*))\big]\widehat v_n^2 dxds\\
&\geq 0+\liminf_{n\to\infty} \int_{Q_1}\big[ T_n L_{uu}(x, \xi_*, \zeta_*, v_* + \alpha_3(v_n - v_*)) + e(x, s)g_{uu}(x, \xi_*, \zeta_*, v_* + \alpha_{12}(v_n - v_*))\big]\widehat v_n^2dxds\\
&\geq \liminf_{n\to\infty}  \int_{Q_1}\Big[ T_n L_{uu}(x, \xi_*, \zeta_*, v_* + \alpha_3(v_n - v_*)) + e(x, s)g_{uu}(x, \xi_*, \zeta_*, v_* + \alpha_{12}(v_n - v_*)) \\
&\quad\quad\quad\quad- T_* L_{uu}[x,s] -e(x, s)g_{uu}[x,s] \Big]\widehat v_n^2 dxds\\
&+\liminf_{n\to\infty}\int_{Q_1}\big[T_* L_{uu}[x,s] -e(x, s)g_{uu}[x,s] \big]\widehat v_n^2 dxds\\
&=0+ \liminf_{n\to\infty}\int_{Q_1}\big[T_* L_{uu}[x,s] -e(x, s)g_{uu}[x,s] \big]\widehat v_n^2 dxds\notag\\
&\geq \liminf_{n\to\infty}\int_{Q_1}\Lambda v_n^2 dxds\\
&=\liminf_{n\to\infty}\Big(\int_{Q_1}\Lambda v_n^2 dxds +\Lambda |\widehat T_n|^2\Big)=\liminf_{n\to\infty}\Lambda\big(\|\widehat v_n\|^2_{L^2(Q_1)}+|\widehat T_n|^2\big)\geq\Lambda
\end{align*} which is absurd. The proof of proposition is complete.  
\end{proof}

\section{Proof of  main results}

\subsection{Proof of Theorem 3.1}

We first prove the following lemma.  

\begin{lemma}\label{lemma5.1} If $(T,  y, u) \in \mathcal{C}[(T_*, y_*, u_*)]$, then   $(\xi, T, \zeta, v) \in \mathcal{C}_1[(\xi_*, T_*, \zeta_*, v_*)]$, where $\xi(s)=Ts$, $\zeta(x, s) = y(x, T_*s)$, $v(x, s) = u(x, T_*s)$. In particular, if  $(T,  y, u) \in \mathcal{C}_0[(T_*, y_*, u_*)]$, then   $(\xi, T, \zeta, v) \in \mathcal{C}_{1,0}[(\xi_*, T_*, \zeta_*, v_*)]$.  
\end{lemma}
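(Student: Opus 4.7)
The plan is to prove the ``in particular'' statement first by directly verifying that the change of variable $t = T_* s$ transforms each defining condition $(c_i)$ of $\mathcal{C}_0[(T_*, y_*, u_*)]$ into the corresponding condition $(b_j)$ of $\mathcal{C}_{1,0}[z_*]$, and then deduce the general statement by extending this pointwise inclusion to the closures via continuity of the underlying linear map between ambient spaces.

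First, I would fix $(T, y, u) \in \mathcal{C}_0[(T_*, y_*, u_*)]$ and set $\xi(s) = Ts$, $\zeta(x, s) = y(x, T_*s)$, $v(x, s) = u(x, T_*s)$. The key observation is that evaluating the ``starred'' quantities at $t = T_* s$ converts the notation $[x, t]$ to $[x, s]$, while the Jacobian $dt = T_*\, ds$ produces the $T_*$ factors appearing in the integrals of $(b_1)$. For $(c_1) \Rightarrow (b_1)$, applying $dx\,dt = T_*\, dx\,ds$ together with $\frac{Tt}{T_*} = Ts = \xi(s)$ transforms the integral over $Q_{T_*}$ into the integral over $Q_1$ with exactly the factor $T_*$ required by $(b_1)$, and $y(T_*)=\zeta(1)$. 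For $(c_2) \Rightarrow (b_2)$, the relation $\zeta_s(x,s) = T_*\, y_t(x, T_* s)$ implies that multiplying the PDE in $(c_2)$ at $t = T_*s$ by $T_*$ yields exactly $(b_2)$; the requirement $(b_3)$ is then just the definition $\xi(s) = Ts$. Condition $(c_3)$ becomes $(b_4)$ upon using $\xi(1) = T$ and $y(T_*) = \zeta(1)$, noting that $\psi_i[T_*, T_*]=0$ is identical to $\psi_i[T_*,1]=0$ in the new notation. Finally, for $(c_4) \Rightarrow (b_5)$, the change of variable induces a linear isomorphism $L^\infty(Q_{T_*}) \to L^\infty(Q_1)$ which sends $K_{T_*}$ onto $K_1$ and $g[\cdot,\cdot]$ on $Q_{T_*}$ onto $g[\cdot,\cdot]$ on $Q_1$, hence maps ${\rm cone}(K_{T_*} - g[\cdot,\cdot])$ onto ${\rm cone}(K_1 - g[\cdot,\cdot])$; the left-hand side of $(c_4)$ evaluated at $(x, T_* s)$ is exactly $g_t[x,s]\xi(s) + g_y[x,s]\zeta + g_u[x,s]v$, which therefore lies in the target cone.

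To pass from $\mathcal{C}_0$ to the closure $\mathcal{C}$, I would verify that the linear operator
\[
\Theta : \mathbb{R} \times Y_{T_*} \times U_{T_*} \to Z, \qquad (T, y, u) \mapsto (\xi, T, \zeta, v),
\]
is bounded. This is a routine Jacobian computation: $\|\xi\|_{C([0,1],\mathbb{R})} = |T|$, $\|v\|_{U_1} = \|u\|_{U_{T_*}}$, and for each derivative norm entering the graph norm $\|\cdot\|_Z$ one has estimates of the form $\|\zeta_s\|_{L^p(Q_1)}^p = T_*^{p-1}\|y_t\|_{L^p(Q_{T_*})}^p$, $\|A\zeta\|_{L^p(Q_1)}^p = T_*^{-1}\|Ay\|_{L^p(Q_{T_*})}^p$, and similarly for the other components, using that $T_*>0$ is fixed. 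Once $\Theta$ is continuous, given $(T, y, u) \in \mathcal{C}[(T_*, y_*, u_*)]$ I take a sequence $(T_n, y_n, u_n) \in \mathcal{C}_0[(T_*, y_*, u_*)]$ converging to $(T, y, u)$ in the ambient space. By the particular statement, $\Theta(T_n, y_n, u_n) \in \mathcal{C}_{1,0}[z_*]$, and by continuity $\Theta(T_n, y_n, u_n) \to \Theta(T, y, u) = (\xi, T, \zeta, v)$ in $Z$, whence $(\xi, T, \zeta, v) \in \overline{\mathcal{C}_{1,0}[z_*]} = \mathcal{C}_1[z_*]$.

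The main obstacle I anticipate is the bookkeeping for condition $(c_4) \Rightarrow (b_5)$ combined with the continuity of $\Theta$ on $Y_{T_*}$: one must carefully check that the pointwise change of variable preserves the full graph-norm regularity ($W^{2,1}_p(Q_1)$, $W^{1,1}_2(0,1;D,H)$, and $A\zeta \in L^p(Q_1)$) and that the $L^\infty$-cone structure is respected. Conceptually these facts are transparent since $\Theta$ is a bounded linear isomorphism induced by a smooth time rescaling, but the verification requires writing out each norm transformation explicitly because the cones and regularity spaces live over different time intervals.
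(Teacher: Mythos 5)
Your proposal is correct and follows essentially the same route as the paper: verify that the change of variable $t=T_*s$ converts each condition $(c_i)$ into the corresponding $(b_j)$ for elements of $\mathcal{C}_0[(T_*,y_*,u_*)]$, then pass to the closure by continuity of the rescaling map (the paper does this by applying the correspondence to a sequence in $\mathcal{C}_0$ converging to the given critical direction and asserting convergence of the images). Your explicit norm estimates for $\Theta$ merely fill in the step the paper dismisses as clear.
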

\begin{proof} Assume that $(T,  y, u) \in \mathcal{C}_0[(T_*, y_*, u_*)]$.   By definition of $\mathcal C[(T_*, y_*, u_*)]$, there exists a sequence $\{(T_k, y_k, u_k)\} \subset \mathcal{C}_0[(T_*, y_*, u_*)]$ converging to $(T,  y, u)$. Since $\{(T_k, y_k, u_k)\} \subset \mathcal{C}_0[(T_*, y_*, u_*)]$, it satisfies the following conditions:
\begin{itemize}
      \item [$(c_1)$] $\psi_{0T}[T_*, T_*]T_k +\psi_{0\zeta}[T_*,T_*]y_k(T_*)  +    \displaystyle \int_{Q_{T_*}}\Big( \frac{T_k}{T_*} L[x, t] +L_t[x, t]\frac{T_kt}{T_*}+ L_y[x, t]y_k + L_u[x, t]u_k  \Big) dxdt \leq 0$;

    \item [$(c_2)$]   $\dfrac{\partial y_k}{\partial t} +  Ay_k +  \psi_t[x, t] \dfrac{T_kt}{T_*}+  \psi_y[x, t]y_k - u_k  + \dfrac{T_k}{T_*}(Ay_* +\psi[x, t]  -u_*) =  0, \quad    y(0)=0$;

    \item [$(c_3)$] $\psi_{iT}[T_*, T_*]T_k +\psi_{i\zeta}[T_*, T_*]y_k(T_*) \le 0$  \ \  for $i\in \big\{1,2,.., m| \psi_i[T_*, T_*]=0\big\}$;

    \item [$(c_4)$] $g_t[\cdot, \cdot] \frac{T_kt}{T_*}+ g_y[\cdot, \cdot]y_k + g_u[\cdot, \cdot]u_k  \in {\rm cone}\big(K_{T_*} -g[\cdot, \cdot]\big)$.
\end{itemize}
    We define $\xi_k=s T_k$, $\zeta_k(x, s)= y_k(x, \xi_*(s))$ and $v_k(x, s) = u_k(x, \xi_*(s))$. By changing  variable $t=\xi_*(s)=T_*s$,  we obtain
\begin{itemize}
       \item [$(b_1)$] $\psi_{0T}[T_*,1]T_k  + \psi_{0\zeta}[T_*, 1]\zeta_k(1)+ \displaystyle \int_{Q_1}\Big(T_k L[x, s] +  T_*\big(L_t[x, s]\xi_k  +  L_y[x, s]\zeta_k + L_u[x, s]v_k\big)\Big) dxds \leq 0$;

    \item [$(b_2)$] $\dfrac{\partial \zeta_k}{\partial s} +T_*\big( A\zeta_k + \psi_t[\cdot, \cdot]\xi_k  +  \psi_y[\cdot, \cdot]\zeta_k -v_k\big)+ T_k(A\zeta_* +\psi[\cdot, \cdot]-v_*)=0, \quad    \zeta(0)=0$;

    \item [$(b_3)$] $\xi_k(s)=T_ks$ for all $s\in [0,1]$;

    \item [$(b_4)$] $\psi_{i\xi}(\xi_*(1), \zeta_*(1))\xi_k(1)+\psi_{i\zeta}(\xi_*(1), \zeta_*(1))\zeta_k(1)\leq 0$  \ \  for $i\in \{1,2,.., m| \psi_i(\xi_*(1), \zeta_*(1))=0\}$;

    \item [$(b_5)$] $ g_t[\cdot, \cdot]\xi_k  +  g_y[\cdot, \cdot]\zeta_k + g_u[\cdot, \cdot]v_k \in{\rm cone}\big(K_1-g[\cdot, \cdot]\big)$.   
\end{itemize}
    This implies that $\{(\xi_k, T_k, \zeta_k, v_k)\}\subset \mathcal C_{1,0}[(\xi_*, T_*,  \zeta_*, v_*)]$. It is clear that $(\xi_k, T_k, \zeta_k, v_k)\to (\xi, T,  \zeta, v)$. Hence  $(\xi, T,  \zeta, v)\in  \mathcal C_1[(\xi_*, T_*, \zeta_*, v_*)]$. The lemma is proved. 
\end{proof}

\noindent \textbf{Proof of Theorem \ref{main-theorem}}.  
Let $ (T_*, y_*, u_*)\in\Phi$  be  a locally optimal solution to $(P)$. As before, we define  
\begin{align*}
 \xi_*(s)=T_*s, \quad    \zeta_* (x, s)= y_*(x, \xi_*(s)), \quad  v_*(x, s) = u_*(x, \xi_*(s)).  
\end{align*}
By Proposition \ref{relationPandP1}, vector $(\xi_*, T_*,  \zeta_*, v_*)$ is a locally optimal solution to $(P_1)$ and $\widehat J (\xi_*, T_*, \zeta_*, v_*) = J (T_*, y_*, u_*)$.  Let $d=( T,  y, u) \in \mathcal{C}_0[(T_*, y_*, u_*)]$.  By Lemma \ref{lemma5.1}, vector $z:= (\xi, T, \zeta, v) \in \mathcal{C}_{1,0} [(\xi_*, T_*, \zeta_*, v_*)]$, where $\xi(s)=Ts$,  $\zeta(x, s) = y(x, \xi_*(s))$, $v(x, s) = u(x, \xi_*(s))$.  According to Proposition \ref{Pro-KKT-P1}, there exist Lagrange multipliers $\lambda \in \mathbb R_+$, $ \mu = (\mu_1, \mu_2, ...,  \mu_m) \in \mathbb R^m$, $\varphi \in L^q(Q_1)  \cap L^1(0, 1; W^{1, 1}_0(\Omega))$, $ e \in L^1(Q_1)$ and an absolutely continuous function $ \phi : [0, 1] \to \mathbb{R}$ satisfying the conditions (\ref{cm0.1})-(\ref{cm0.6}). Define 
\begin{align*}
    \widetilde\varphi(x, t) = \varphi (x, \frac{t}{T_*}), \quad   \widetilde\phi(t)=   \phi (\frac{t}{T_*}), \quad  \widetilde e(x, t)= \frac{1}{T_*}e(x, \frac{t}{T_*}). 
\end{align*} By replacing  $s=\xi_*^{-1}(t)=\frac{t}{T_*}$ into \eqref{cm0.1}-\eqref{cm0.5}, we  can show  that $(\lambda, \mu, \widetilde\varphi, \widetilde\phi, \widetilde e)$ satisfies conditions $(i)-(iv)$  of Theorem 3.1. Also, by changing variable $s=\frac{t}{T_*}$ in \eqref{cm0.6}, we obtain \eqref{NSOC-P}. 
 
The last assertion of Theorem \ref{main-theorem} follows from the last assertion of Proposition \ref{Pro-KKT-P1} and Lemma \ref{lemma-Robinson}. The proof of Theorem \ref{main-theorem} is complete.  $\hfill\square$

\subsection{Proof of Theorem 3.2}

\begin{lemma}\label{Lemma-RelCrCone} Suppose $T_*>0$, $(\xi_*, T_*, \zeta_*, v_*)\in\Phi_1$ and $(\xi, T,  \zeta, v) \in \mathcal{C}'_1[(\xi_*, T_*, \zeta_*, v_*)]$. Then  $(T, y, u) \in \mathcal{C}'[(T_*, y_*, u_*)]$, where $y(x, t):=\zeta(x, \frac{t}{T_*}), u(x, t):=v(x, \frac{t}{T_*}), y_*(x, t):=\zeta_*(x, \frac{t}{T_*}) $ and $u_* := v_*(x, \frac{t}{T_*})$. 
\end{lemma}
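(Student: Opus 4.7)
The plan is to do exactly what was done in Lemma \ref{lemma5.1}, but in the opposite direction and for the enlarged critical cones: fix a vector $(\xi,T,\zeta,v)\in\mathcal{C}_1'[(\xi_*,T_*,\zeta_*,v_*)]$, define $(y,u)$ by the change of variable $t=T_*s$, and verify each of the defining conditions $(c_1')$--$(c_4')$ of $\mathcal{C}'[(T_*,y_*,u_*)]$ by direct substitution.

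First I would check the regularity statements. Since $\zeta\in W^{1,1}_2(0,1;D,H)$ and $v\in L^2(Q_1)$, a Jacobian computation using $s=t/T_*$, $ds=dt/T_*$ and $\partial_s\zeta=T_*\partial_t y$ gives
\begin{align*}
\int_0^{T_*}\|y(\cdot,t)\|_D^2\,dt=T_*\int_0^1\|\zeta(\cdot,s)\|_D^2\,ds,\qquad \int_0^{T_*}\|\partial_t y(\cdot,t)\|_H^2\,dt=\frac{1}{T_*}\int_0^1\|\partial_s\zeta(\cdot,s)\|_H^2\,ds,
\end{align*}
together with $\int_{Q_{T_*}}u^2\,dxdt=T_*\int_{Q_1}v^2\,dxds$, so that $y\in W^{1,1}_2(0,T_*;D,H)$ and $u\in L^2(Q_{T_*})$ as required by the ambient space of $\mathcal{C}'[(T_*,y_*,u_*)]$.

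Next I would run the substitution through each condition. The key identities are: $\zeta(x,s)=y(x,t)$, $\zeta_*(x,s)=y_*(x,t)$, $v(x,s)=u(x,t)$, $v_*(x,s)=u_*(x,t)$, $\xi(s)=Ts=\tfrac{Tt}{T_*}$ (from $(b_3')$), $A\zeta(\cdot,s)=Ay(\cdot,t)$, and all bracketed coefficients $L_\bullet[x,s]$, $\psi_\bullet[x,s]$, $g_\bullet[x,s]$ agree with the corresponding $L_\bullet[x,t]$, $\psi_\bullet[x,t]$, $g_\bullet[x,t]$ since $(\xi_*(s),\zeta_*(x,s),v_*(x,s))=(t,y_*(x,t),u_*(x,t))$. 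Substituting into $(b_1')$ and using $\int_{Q_1}\cdot\,dxds=\tfrac{1}{T_*}\int_{Q_{T_*}}\cdot\,dxdt$ together with $\zeta(1)=y(T_*)$ yields $(c_1')$; dividing $(b_2')$ by $T_*$ after replacing $\partial_s\zeta=T_*\partial_t y$ yields $(c_2')$ with the correct initial condition $y(0)=0$; $(b_4')$ becomes $(c_3')$ because $\psi_{iT}[T_*,1]=\psi_{iT}[T_*,T_*]$, $\psi_{i\zeta}[T_*,1]=\psi_{i\zeta}[T_*,T_*]$ and $\zeta(1)=y(T_*)$; and $(b_5')$ becomes $(c_4')$ pointwise a.e.\ after reading off both sides at $(x,t)$.

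There is no real obstacle here: everything is a linear change of variable, and no compactness or functional-analytic limit argument is needed. The only point that requires a moment of care is that $\mathcal{C}'_1$ and $\mathcal{C}'$ are defined over the weaker spaces $W^{1,1}_2(0,\cdot;D,H)\times L^2$ rather than $Y_\cdot\times U_\cdot$, so the change of variable has to be carried out at the $L^2$/$W^{1,1}_2$ level (as done in the first paragraph) rather than at the $Y_1$/$Y_{T_*}$ level used in Lemma \ref{lemma5.1}. Once that is settled, the four verifications above conclude the proof.
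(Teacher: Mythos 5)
Your proposal is correct and follows essentially the same route as the paper: a direct change of variable $t=T_*s$ converting each condition $(b_i')$ of $\mathcal{C}'_1[(\xi_*,T_*,\zeta_*,v_*)]$ into the corresponding condition $(c_i')$ of $\mathcal{C}'[(T_*,y_*,u_*)]$. The only difference is cosmetic: the paper first spells out that $(T_*,y_*,u_*)\in\Phi$ (so that the bracket notation $L[x,t]$, $g[x,t]$, etc.\ is meaningful), while you instead make explicit the $W^{1,1}_2/L^2$ regularity bookkeeping; both are routine.
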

\begin{proof}  By definition of $\Phi_1$, $(\xi_*, T_*, \zeta_*, v_*)\in C([0,1], \mathbb{R})\times\mathbb{R}\times Y_1\times U_1$ and satisfies the following constraints:
\begin{align*}
    &\frac{\partial \zeta_*}{\partial s} + T_*A\zeta_* + T_*\psi(x, \xi_*, \zeta_*) = T_*v_* \quad \text{in} \ Q_1, \quad  \zeta_*(x, s)=0 \quad  \text{on} \ \Sigma_1 = \Gamma\times [0, 1], \\
    &\zeta_*(0)=y_0 \quad \text{in} \ \Omega, \\
    &\xi_*(s) =T_*s\  \text{for all}\ s\in [0,1], \\
    &\psi_i(\xi_*(1), \zeta_*(1)) \le 0,   \quad  i=1,2,..., m,\\
    &a\leq g(x, \xi_*(s), \zeta_*(x, s), v_*(x, s)) \le b \quad {\rm a.a.} \ x \in \Omega, \ \forall s  \in [0, 1]. 
\end{align*} By definition of $y_*$, we have 
$$
\frac{d}{dt}y_*(x, t)=\frac{d}{ds}\zeta_*(x, \frac{t}{T_*})\frac{1}{T_*}.
$$ Changing variable $s=\frac{t}{T_*}$ in the above constraints, we obtain 
\begin{align*}
    &\frac{\partial y_*}{\partial t} + Ay_* + \psi(x, t, y_*) = u_* \quad \text{in} \ Q_{T_*}, \quad  y_*(x, t)=0 \quad  \text{on}\  \Sigma_{T_*} = \Gamma\times [0, T_*], \\
    &y_*(0)=y_0 \quad \text{in}\ \Omega, \\
    &t\in [0, T_*], \\
    &\psi_i(T_*, y_*(T_*)) \le 0,   \quad  i=1,2,..., m,\\
    &a\leq g(x, t, y_*(x, t), u_*(x, t)) \le b \quad {\rm a.a.} \ x \in \Omega, \  t  \in [0, T_*]. 
\end{align*} This implies that $(T_*, y_*, u_*)\in\Phi$. We now assume that  $(\xi, T,  \zeta, v) \in \mathcal{C}'_1[(\xi_*, T_*, \zeta_*, v_*)]$. By definition,  $(\xi, T, \zeta, v) \in C([0, 1], \mathbb R)\times\mathbb{R} \times W^{1, 1}_2(0, 1; D, H) \times L^2(Q_1)$ and satisfies conditions $(b_1')-(b_5')$. Namely, we have 
\begin{itemize}
    \item [$(b_1)'$] $\psi_{0T}[T_*, 1] T +\psi_{0\zeta}[T_*,1] \zeta(1) + \displaystyle \int_{Q_1}\Big(T L[x, s] +  T_*[L_t[x, s]\xi(s)  +  L_y[x, s]\zeta + L_u[x, s]v]\Big) dxds \leq 0$;
    
    \item [$(b_2)'$] $\dfrac{\partial \zeta}{\partial s} +T_*\big( A\zeta + \psi_t[\cdot, \cdot]\xi  +  \psi_y[\cdot, \cdot]\zeta -v\big)+ T(A\zeta_* +\psi[\cdot, \cdot]-v_*)=0, \quad    \zeta(0)=0$;

    \item [$(b_3)'$] $\xi(s) =   Ts \quad \forall s \in [0, 1]$;

    \item [$(b_4)'$]$\psi_{i\xi}[T_*, 1] T+\psi_{i\zeta}[T_*, 1]\zeta(1)\leq 0$\  for\ $i\in \big\{1,2,.., m| \psi_i[T_*, 1]=0\big \}$;
      
    \item [$(b_5)'$] $g_t[x, s]\xi(s) +  g_y[x, s]\zeta(x, s) + g_u[x, s)v(x, s)\in T([a, b],  g[x, s])$ for a.a. $(x, s)\in Q_1$.
\end{itemize} Changing variable  $s=\frac{t}{T_*}$ with $t\in[0, T_*]$ and putting $y(x, t)=\zeta(x, \frac{t}{T_*})$ and $u=v(x, \frac{t}{T_*})$, we obtain 
\begin{itemize}
\item[$(c_1)'$] $\psi_{0T}[T_*, T_*]T +\psi_{0y}[T_*,T_*]y(T_*)  +   \displaystyle \int_{Q_{T_*}}\Big(\frac{T}{T_*} L[x, t] +  [L_t[x, t]\frac{Tt}{T_*}  +  L_y[x, t]y + L_u[x, t]u]\Big) dxdt \leq 0$;
 \item [$(c_2')$] $\dfrac{\partial y}{\partial t} +  Ay +  \psi_t[x, t] \dfrac{Tt}{T_*}+  \psi_y[x, t]y - u  + \dfrac{T}{T_*}(Ay_* +\psi[x, t]  -u_*) =  0, \quad    y(0)=0$;
 \item [$(c_3)'$]$\psi_{i\xi}[T_*, T_*]T+\psi_{i\zeta}[T_*,T_*]y(T_*)\leq 0$\  for $i\in \big\{1,2,.., m| \psi_i[T_*, T_*]=0\big\}$;

\item [$(c_4)'$] $g_t[x, t]\frac{Tt}{T_*} +  g_y[x, t] y(x, t) + g_u[x, t)u(x, t)\in T([a, b],  g[x, t])$ for a.a. $(x, t)\in Q_{T_*}$. 
\end{itemize}  Hence $(T, y, u)\in\mathcal{C}'[(T_*, y_*, u_*)]$. The lemma is proved. 
\end{proof}

\noindent{\bf Proof of Theorem \ref{main-theorem+}}.   Let $(T_*, y_*, u_*) \in \Phi$.  We define 
\begin{align}
\label{22.1}
\xi_*(s)=T_*s, \quad \zeta_*(x,s)=y_*(x, T_*s), \quad   v_*(x,s)=u_*(x, T_*s) 
\end{align} 
for all $s\in [0,1]$.  By a simple argument, we can show that $(\xi_*, T_*, \zeta_*, v_*)\in\Phi_1$. Besides, we also have $y_*(x, t)=\zeta_*(x, \frac{t}{T_*})$ and $u_*(x, t)=v_*(x, \frac{t}{T_*})$ for all $t\in [0, T_*]$. Let $(1,\mu, \widetilde{\varphi}, \widetilde{\phi}, \widetilde{e})$ be  multipliers which satisfy conditions $(i)-(iv)$ of Theorem \ref{main-theorem} and define 
\begin{align}
\label{22.2}
\varphi(x, s)=\widetilde{\varphi}(x, T_*s), \quad  \phi(s)=   \widetilde{\phi}(T_*s), \quad  e(x, s)=T_*\widetilde{e}(x, T_*s) 
\end{align}
for all $s\in [0,1]$. Then by changing variable $t = \xi_*(s) = T_*s$ in $(i)-(iv)$ of Theorem \ref{main-theorem}, it's easy to check that $(1, \mu, \varphi, \phi, e)$ satisfies conclusions $(i)-(iv)$ of Proposition \ref{Pro-KKT-P1}. 
Now we take any $(\xi, T, \zeta, v)\in \mathcal{C}'_1[(\xi_*, T_*, \zeta_*, v_*)]\setminus\{(0,0,0,0)\}$. Define
\begin{align}
\label{22.3}
y(x, t)=\zeta(x, \frac{t}{T_*}),\ u(x, t)=v(x, \frac{t}{T_*})
\end{align}
for $t \in [0, T_*]$.  By Lemma \ref{Lemma-RelCrCone}, we have $(T, y, u)\in \mathcal{C}'[(T_*, y_*, u_*)]\setminus\{0,0, 0\}$. It follows from this and \eqref{SSOC-P} that
\begin{align}  
&\psi_{0TT}(T_*, y_*(T_*))T^2 + 2\psi_{0 T y}(T_*, y_*(T_*))T y(T_*) +\psi_{0 yy}(T_*, y_*(T_*))y(T_*)^2 \notag\\
&+  \int_{Q_{T_*}}\Big( L_{tt}[x, t](\frac{Tt}{T_*})^2   + 2L_{ty}[x, s]\frac{Tt}{T_*} y(x,t)+  2L_{tu}[x, t]\frac{Tt}{T_*}u(x,t) \Big) dxdt \notag\\
&+ \int_{Q_{T_*}}\Big( L_{yy}[x, t]y(x,t)^2 + L_{uu}[x, t]u(x,t)^2  +  2L_{yu}[x, t]y(x,t)u(x,t)\Big) dxdt  \nonumber \\
 &+ 2  \int_{Q_{T_*}}\frac{T}{T_*} \Big(L_t[x, t]\frac{Tt}{T_*}  + L_y[x, t]y(x,t)  +  L_u[x, t]u(x,t)\Big)  dxdt\notag\\
 &+  \int_{Q_{T_*}}\widetilde\varphi(x,t)\Big(\psi_{tt}[x, t](\frac{Tt}{T_*})^2   + 2\psi_{ty}[x, s]\frac{Tt}{T_*} y(x,t)+ \psi_{yy}[x, t]y(x,t)^2 \Big)  dxdt   \nonumber \\
 &+  \int_{Q_{T_*}}  2 \frac{T}{T_*}\widetilde\varphi(x,t) \Big(A y + \psi_t[x, t]\frac{Tt}{T_*}  + \psi_y[x, t]y - u\Big)dxdt \nonumber \\
 &+  \int_{Q_{T_*}} \widetilde e(x, s) \big(g_{tt}[x, t](\frac{Tt}{T_*})^2  +  g_{yy}[x, t]y^2 + g_{uu}[x, t]u^2  + 2g_{ty}[x, t]\frac{Tt}{T_*} y + 2g_{tu}[x, t]\frac{Tt}{T_*} u  +  2g_{yu}[x, t]yu\big)  dxdt  \nonumber \\
 &+  \sum_{i = 1}^m \mu_i \Big[\psi_{iTT}(T_*, y_*(T_*))T^2 + 2\psi_{i T \zeta}(T_*, y_*(T_*))T y(T_*) +\psi_{i\zeta \zeta}(T_*, y_*(T_*))y(T_*)^2\Big]   \geq 0. \nonumber 
     \end{align}
 By changing variable $t=T_*s$ and noting that $\xi(s)=Ts$, (\ref{22.1}), (\ref{22.2}) and (\ref{22.3}), we obtain that the condition $(v)$ of Proposition \ref{Pro-SOSC-P1} is satisfied. 
On the other hand, changing variable  $t=T_*s$ in \eqref{L-C-condition-P} and noting that $e(x, s)=T_*\widetilde{e}(x, T_*s)$,  we obtain  
\begin{align*}
L_{uu}[x, s] + \frac{1}{T_*} e(x, s)g_{uu}[x, s]\geq \Lambda_0 \quad {\rm a.a.} \ (x, s) \in Q_1. 
\end{align*}
This implies that   the condition $(vi)$ of Proposition \ref{Pro-SOSC-P1} is satisfied with $\Lambda = T_* \Lambda_0$.
 Thus $(\xi_*, T_*, \zeta_*, v_*)\in\Phi_1$ and $(1, \mu, \varphi, \phi, e)\in\Lambda[(\xi_*, T_*, \zeta_*, v_*)]$ which satisfy all sufficient conditions of Proposition \ref{Pro-SOSC-P1}. Accordingly, $(\xi_*, T_*, \zeta_*, v_*)$ is a locally optimal solution to $(P_1)$. Since $y_*(x, t)=\zeta_*(x, \frac{t}{T_*})$ and $u_*(x,t)=v_*(x, \frac{t}{T_*})$ for $t \in [0, T_*]$, Proposition \ref{relationPandP1+} implies that $(T_*, y_*, u_*)$ is a locally optimal solution to $(P)$. Moreover, according to the conclusion of Proposition \ref{Pro-SOSC-P1},  there exist numbers $\delta > 0$ and $\kappa > 0$ such that 
    \begin{align}\label{GrowthCond}
        \widehat J (z) \ge \widehat J (z_*) + \kappa\Big( (T - T_*)^2  + \|v - v_*\|^2_{L^2(Q_1)}  \Big) 
    \end{align} for all $z = (\xi, T, \zeta, v) \in \Phi_1 \cap B_Z(z_*, \delta)$. According to the proof of Proposition \ref{relationPandP1+}, there exists $\epsilon_0  \in (0, \frac{\delta}6)$ such that, if $(T, y, u)\in\Phi$ and 
    $$
    {\rm dist}[(T, y, u), (T_*, y_*, u_*)] \leq \epsilon_0, 
    $$ then $(\xi, T, \zeta, v)\in \Phi_1\cap B_Z(z_*, \delta)$ with $\xi(s)=Ts$, $\zeta(x, s)=y(x, Ts)$ and $v(x, s)=u(x, Ts)$ for $s\in [0,1]$. Therefore, for all $(T, y, u)\in\Phi\cap N\big((T_*, y_*, u_*), \epsilon_0\big)$,  we have from \eqref{GrowthCond} that
     \begin{align*}
       J(T, y, u)= \widehat J (z) &\geq \widehat J (z_*) + \kappa\Big((T - T_*)^2   +    \|v - v_*\|^2_{L^2(Q_1)}   \Big)\\
       &=J (T_*, y_*, u_*) + \kappa\Big((T - T_*)^2   +    \int_{Q_1}|u(x, Ts) - u_*(x, T_*s)|^2 dxds   \Big)\\
       &=J (T_*, y_*, u_*) + \kappa\Big((T - T_*)^2  +    \frac{1}{T_*}\int_{Q_{T_*}}|u(x, \frac{Tt}{T_*}) - u_*(x, t)|^2 dxdt  \Big)
    \end{align*} which is the conclusion of Theorem \ref{main-theorem+}.  The proof of Theorem \ref{main-theorem+} is complete. $\hfill\square$

    \medskip

\noindent {\bf Acknowledgments.} This research  was  supported by Vietnam Academy of Science and Technology under grant number CTTH00.01/25-26. The second author would like to thank the Alexander Von Humboldt Foundation for their support during  his visit at the Faculty of Mathematics of the  University of Duisburg-Essen.


\begin{thebibliography}{99}


\bibitem{Amann} H. Amann, {\it Linear and Quasilinear Parabolic Problems. Volume II: Function Spaces}, Birkh\"{a}user, 2019. 



\bibitem{Arada-2000} N. Arada and J.-P. Raymond, {\it Control problems with mixed control-state constraints}, SIAM J. Control. Optim., 39(2000), 1391-1407. 

\bibitem{Arada-2002-1} N. Arada and J.-P. Raymond, {\it Dirichlet boundary control of semilinear parabolic equations. Part 1: Problems with no state constraints }, Appl. Math. Optim., 45(2002), 125-143.

\bibitem{Arada-2002-2} N. Arada and J.-P. Raymond, {\it Dirichlet boundary control of semilinear parabolic equations. Part 2: Problems with pointwise state constraints}, Appl. Math. Optim., 45(2002), 145-167.

\bibitem{Arada-2003} N. Arada and J.-P. Raymond,  {\it Time optimal problems with Dirichlet boundary controls},  Discrete and Continuous Dynamical Systems, 2003, 9(6): 1549-1570.


\bibitem{Bayen} T. Bayen and F. Silva, {\it Second order analysis for strong solution in the optimal control of parabolic equations}, SIAM J. Control Optim., 54(2016), 819-844. 

\bibitem{Bental}  A. Ben-Tal and J. Zowe, {\it A unified theory of first and second order conditions for extremum problems in topological vector spaces}, Math.  Program. Study 19 (1982),  39-76.

\bibitem{Berge} C. Berge, {\it Topological Spaces}, Oliver and Boyd LTD, 1963.



\bibitem{Bon} L. Bonifacius, K. Pieper and B. Vexler, {\it A priori estimates for space time-time finite element discretization of parabolic time-optimal problems}, SIAM J. Control Optim.,  57(2019),  129-162.

\bibitem{Bonnans} J.F. Bonnans and A. Shapiro, \textit{Perturbation Analysis of Optimization Problems}, Springer, New York, 2000.


\bibitem{Brezis2} H. Br\'{e}zis and P. Mironescu, {\it  Gagliardo–Nirenberg inequalities and non-inequalities: The full story},  Annales de l'Institut Henri Poincaré C. 35 (2018) (5): 1356.

\bibitem{Casas-2000} E. Casas, J.-P. Raymond and  H. Zidani, {\it Pontryagin's principle for local solutions of control problems with mixed control-state constraints}, SIAM J. Control Optim., 39(2000) 1182-1203. 


\bibitem{Casas-2015} E. Casas and F. Tr\"{o}ltzsch, {\it Second-order optimality conditions for weak and strong local solutions of parabolic optimal control problems}, Vietnam J. Math., DOI 10.1007/s10013-015-0175-6. 




\bibitem{Casas-2022-1} E. Casas and K. Kunisch, {\it  Optimal control of semilinear parabolic equations with non-smooth pointwise-integral control constraints in time-space}, Appl. Math. Optim. 85, 12(2022). https://doi.org/10.1007/s00245-022-09850-7.

\bibitem{Casas-2023} E. Casas and D. Wachsmuth, {\it A Note on existence of solutions to control problems of semilinear partial differential equations}, SIAM J. Control Optim., 61 (2023) 1095-1112. 

\bibitem{Cesari}  L. Cesari,  {\it Optimization Theory and  Applications,	Problems with Ordinary Differential Equations}, Springer-Verlage, 	New York Inc., 1983.

\bibitem{Evan} L.C. Evan, {\it Partial Differential Equations}, AMS, Providence Rhode Island, 2010. 

\bibitem{Grisvard} P. Grisvard, {\it Elliptic Problems in Nonsmooth Domains}, Pitman Publishing Inc., 1985.

\bibitem{Hirsch-1999} F. Hirsch and G. Lacombe, {\it Elements of Functional Analysis}, Springer, New York, 1999.

\bibitem{Hu} B. Hu and J. Yong, {\it Pontryagin maximum principle for semilinear and quasilinear equations with pointwise state constraints }, SIAM J. Control Optim., 33(1995),  1857-1880. 

\bibitem{Ioffe-1979} A.D. Ioffe and  V.M. Tihomirov, {\it Theory of Extremal Problems}, North-Holland, Amsterdam, 1979.

\bibitem{Khanh.Kien-2024}  H. Khanh, B.T. Kien,  {\it On the regularity of multipliers and second-order optimality conditions of KKT-type for semilinear parabolic control problems}, J.  Math. Anal. Appl.,  538(2024), 128436.

\bibitem{Kien-Binh-2023} B.T. Kien and T.D. Binh, {\it On the second-order optimality conditions for multi-objective optimal control problems with mixed	pointwise constraints}, J. Glob. Optim.,   85 (2023),  155–183.

\bibitem{Kien-2022} B.T. Kien and N.Q. Tuan, {\it Error estimates for approximate solutions to
seminlinear elliptic optimal control problems with nonlinear and mixed constraints}, Numerical Functional Analysis and Optimization,   43(2022), 1672-1706.

\bibitem{Kien-2018}  B.T. Kien, N.V. Tuyen, and J.-C. Yao, {\it Second-order KKT optimality conditions for multiobjective optimal control problems}, SIAM J. Control Optim., 56 (2018), pp. 4069-4097.

\bibitem{Kien-2017} B.T. Kien, V.H.  Nhu and N.H. Son, {\it Second-order optimality conditions for a semilinear elliptic optimal control problem with mixed pointwise constraints}, Set-Valued Var. Anal., 25 (2017) 177-210.

\bibitem{Ladyzhenskaya} O. Ladyzhenskaya, V. Solonnikov and  N. Ural'tseva,  \textit{Linear and Quasilinear Equations of Parabolic Type}. American Mathematical Society, Providence, 1988.

\bibitem{Longuski} J.M. Longuski, J.J. Guzm\'{a}n and J.E. Prussing, {\it Optimal Control with Aerospace Applications}, Springer Science+Business Media New York 2014.

\bibitem{Pales} Z. P\'{a}les and V. Zeidan, {\it Characterization of $L^1$-closed decomposable set in $L^\infty$}, J. Math. Anal. Appl., 238(1999) 491-515. 

\bibitem{Pesce} V. Pesce, A. Colagrossi and S. Silvestrini,   {\it Modern Spacecraft Guidance, Navigation and Control,} Elsevier Inc, 2023. 

\bibitem{Maurer-2002} H. Maurer and H.J. Oberle, {\it Second order optimal conditions for optiamal control problems with free final time: The Riccati aproach}, SIAM J. Control Optim., 41(2002), pp. 380-403. 

\bibitem{Maurer-2004} H. Maurer and N.P. Osmolovskii, {\it Second order optimal conditions for time-optimal bang-bang control}, SIAM J. Control Optim., (42)2004,  2239–2263.

\bibitem{Raymond} J.-P. Raymond and  H. Zidani, {\it Pontryagin's principle for time-optimal problems}, J. Optim. Theor. Appl., 101(1999),  375-402. 

\bibitem{Rudin} {W. Rudin}, {\it Functional Analysis}, McGraw-Hill, Inc.,  1973. 

\bibitem{Triebel} H. Triebel, {\it Interpolation Theory, Function Spaces, Differential Operators}, North Holland, Amsterdam, 1978.

\bibitem{Troltzsch} F. Tr\"{o}ltzsch, {\it Optimal Control of Partial Differential Equations, Theory, Method and Applications}, American Mathematical Society,  Providence Rhode Island, 2010. 

\bibitem{Troltzsch-2000} F. Tr\"{o}ltzsch, {\it Lipschitz stability of solutions of linear-quadratic parabolic control problems with respect to perturbations}, Dynam. Contin. Discrete Impuls. Systems 7 (2000), no. 2, 289–306.

\bibitem{Y} R. Yanushevsky, {\it Modern Missile Guidance}, CRC Press,  Taylor \& Francis Group, 2019.


\end{thebibliography}
\end{document}